\newtheorem{theorem}{Theorem}[section]
\newtheorem{corollary}[theorem]{Corollary}
\newtheorem{proposition}[theorem]{Proposition}
\newtheorem{lemma}[theorem]{Lemma}
\theoremstyle{definition}
\newtheorem{definition}[theorem]{Definition}
\newtheorem{notation}[theorem]{Notation}
\newtheorem{example}[theorem]{Example}
\newtheorem{remark}[theorem]{Remark}
\numberwithin{equation}{section}
\def\A{\mathcal{A}}
\def\R{\mathbb{R}}
\def\C{\mathbb{C}}
\def\F{\mathbb{F}}
\def\Okubo{\mathcal{O}}
\DeclareMathOperator{\n}{n}
\DeclareMathOperator{\s}{s}
\DeclareMathOperator{\tr}{tr}
\DeclareMathOperator{\spn}{span}
\DeclareMathOperator{\D}{d}
\DeclareMathOperator{\diam}{diam}
\DeclareMathOperator{\Ann}{Ann}
\DeclareMathOperator{\Aut}{Aut}
\DeclareMathOperator{\Orb}{Orb}
\DeclareMathOperator{\Fix}{Fix}
\DeclareMathOperator{\chrs}{char}
\DeclareMathOperator{\Jord}{J}
\newcommand{\frsl}{{\mathfrak{sl}}}
\newcommand{\id}{\mathrm{id}}
\providecommand{\keywords}[1]{\textbf{Keywords:} #1}
\providecommand{\msc}[1]{\textbf{MSC 2020:} #1}
\begin{document}

\title{On orthogonality graphs of Okubo algebras}
\author{
Danil Pavlinov$^{a,b}$ and Svetlana Zhilina$^{a,b}$
}
\date{\small \em
$^a$Department of Mathematics and Mechanics, Lomonosov Moscow State\\ University, Moscow, 119991, Russia\\
$^b$Moscow Center of Fundamental and Applied Mathematics, Moscow, 119991, Russia
}

\maketitle

\begin{abstract}
The orthogonality graph of an Okubo algebra with isotropic norm over an arbitrary field $\F$ is considered. Its connected components are described, and their diameters are computed. It is shown that there exist at most two shortest paths between any pair of vertices, and the conditions under which the shortest path is unique are determined.
\end{abstract}

\keywords{Okubo algebras, composition algebras, pseudo-octonions, relation graphs, orthogonality graphs, geodetic graphs.}

\msc{05C25, 17A75.}

\let\thefootnote\relax\footnote{The authors' research was supported by the Moscow Center of Fundamental and Applied Mathematics of Lomonosov Moscow State University under agreement No. 075-15-2025-345.}

\let\thefootnote\relax\footnote{{\em Email addresses:} \texttt{pavlinov.d.aa@gmail.com} (Danil Pavlinov), \texttt{s.a.zhilina@gmail.com} (Svetlana Zhilina)}
	
	\section{Introduction}

    One of the convenient methods for analyzing binary relations on algebraic structures is to study the corresponding relation graphs. For an arbitrary (possibly non-unital, noncommutative, and non-associative) algebra~$\A$ over a field~$\F$, of greatest interest are the commuting, zero divisor, and orthogonality graphs, where the elements $a, b \in \A$ are called {\em orthogonal} if $ab = ba = 0$.
    
    In the current paper we focus on zero divisor graphs and orthogonality graphs. Clearly, the structure of these graphs is nontrivial only if $Z(\A)$ and $Z_{LR}(\A)$ are respectively nonempty, where $Z(\A)$ denotes the set of all zero divisors (left, right, or two-sided) in~$\A$, and $Z_{LR}(\A)$ denotes the set of two-sided zero divisors in~$\A$.

    \begin{notation}
    For any subset $X$ of a linear space $V$ over~$\F$ we denote the set of lines passing through nonzero elements of~$X$ by
    \[
    \mathbb{P}(X) = \{ [x] = \mathbb{F} x \mid x \in X \setminus \{ 0 \} \}.
    \]
    \end{notation}
	
	\begin{definition} \label{definition:graphs}
		Let $\A$ be an arbitrary algebra. We define the following relation graphs of~$\A$:
		\begin{itemize}
			\item 
            The {\em orthogonality graph} $\Gamma_O(\A)$: its vertices correspond to the elements of $\mathbb{P}(Z_{LR}(\A))$, and two distinct vertices $[a]$ and $[b]$ are adjacent (denoted by $[a] \leftrightarrow [b]$) if and only if $ab=ba=0$.
            \item
            The {\em directed zero divisor graph} $\Gamma_Z(\A)$: its vertices correspond to the elements of $\mathbb{P}(Z(\A))$, and two distinct vertices $[a]$ and $[b]$ are connected by a directed edge (denoted by $[a] \to [b]$) if and only if $ab=0$.
		\end{itemize}
	\end{definition}

    Henceforth, when speaking about the vertices of the graphs defined above, we will not distinguish between a nonzero element $a$ and a line $[a] = \F a$ passing through it.
    Recall that, in a directed or an undirected graph $\Gamma$, $\D(a,b)$ denotes the {\em distance} between two vertices $a$ and $b$, and $\diam(\Gamma) = \sup\limits_{a,b \in \Gamma} \D(a,b)$ is the {\em diameter} of $\Gamma$.

    Among the directions where relation graphs find particularly important applications, we mention two intertwining questions: the problem of classification of relation preserving mappings and the isomorphism problem, that is, exploring the connection between the isomorphism of algebraic structures and the isomorphism of corresponding relation graphs. For instance, in~\cite{Arambasic23} both these questions were answered for Birkhoff--James orthogonality graphs of smooth normed spaces, and in~\cite{Dolinar20} the isomorphism problem of finite-dimensional formally real simple Jordan algebras was completely solved in terms of graphs induced by Jordan orthogonality. Note also that linear preserver problems are closely connected to Jordan homomorphisms, see a recent survey by Bre\v{s}ar and Zelmanov~\cite{Bresar}.

    The aim of our study is to investigate relation graphs of an important class of non-associative algebras, namely, Okubo algebras. Okubo algebras were first defined in Okubo's paper~\cite{Oku78} and have subsequently been extensively studied in numerous papers by Okubo and Osborn \cite{Oku78D, Oku78, OO81b}, as well as by Elduque \cite{Eld97, Eld99, Eld09, Eld15, Eld18, Eld23}, who also co-authored with Myung \cite{EM91, EM93} and P\'erez \cite{EM04, EP96}. According to~\cite[Theorem~2.9]{Eld18}, Okubo algebras, along with forms of para-Hurwitz algebras, constitute the class of symmetric composition algebras. Furthermore, any finite-dimensional flexible composition algebra is either unital (i.e., it is a Hurwitz algebra) or symmetric, see~\cite[Theorem~3.2]{EM04}.
    
    In our previous paper~\cite{Zhilina-Pavlinov} commuting graphs of Okubo algebras have been considered. We have particularly focused on the case of the pseudo-octonion algebra over a field~$\F$, $\chrs \F \neq 3$, which contains a primitive cube root of unity, and also on the case of the real division Okubo algebra --- the so-called real pseudo-octonions.     Besides, by~\cite[Corollary~4.6]{Zhilina-Pavlinov}, the distance between any two idempotents in the commuting graph of an arbitrary Okubo algebra is at most~$2$, that is, the intersection of their centralizers is nonzero.
    
    In the current paper we concentrate on orthogonality and zero divisor graphs of Okubo algebras. Our main result is Theorem~\ref{theorem:graph-of-orthogonality} which contains the full description of all connected components in the orthogonality graph of an arbitrary Okubo algebra with isotropic norm. The number of connected components, their structure and diameters may vary depending on the numerical characteristics of the underlying field and the algebra itself. However, the orthogonality graph of an Okubo algebra is always either geodetic or bigeodetic, that is, either the shortest path between any pair of vertices is unique, or there are at most two such paths.

    We remark that our approach to the study of Okubo algebras leads to deeper understanding of their idempotents and the role played by zero divisors satisfying the condition $\n(x,x*x) = 0$. In particular, in the case of the split Okubo algebra over a field of characteristic~$3$, Corollary~\ref{corollary:two-types-char-three} extends the characterization of singular and quadratic idempotents obtained in~\cite[Proposition~8.7]{Eld18} to terms of the orthogonalizers of their corresponding zero divisors, and Corollary~\ref{corollary:unique-path-three} reveals how these zero divisors interact in the orthogonality graph.
    
    The structure of this paper is as follows: Section~2 contains main statements about Okubo algebras that are used throughout the text. In particular, in Subsection~2.1 we present the construction of Okubo algebras over fields of characteristic not~$3$ via pseudo-octonion algebras. In Subsection~2.2 the multiplication table of an arbitrary Okubo algebra with isotropic norm is given, and the special case when the algebra under consideration contains nonzero idempotents is also considered. Subsection~2.3 describes the explicit form of left and right annihilators of zero divisors in Okubo algebras, as well as their intersections, which was obtained by Matzri and Vishne~\cite{Matzri}.
    These results immediately imply Theorem~\ref{theorem:graph-of-zero-divisors-main} which states that the zero divisor graph of an arbitrary Okubo algebra with isotropic norm is connected, and its diameter equals~$2$.
    
    Section~3 is devoted to orthogonality graphs of Okubo algebras with isotropic norm over arbitrary fields. Proposition~\ref{proposition:connectivity-component-with-type3-elem} states that any zero divisor $x$ satisfying $\n(x, x*x) \neq 0$ generates a connected component of diameter~$1$ which consists of only two vertices. If an Okubo algebra contains a nonzero element $x$ such that $\n(x,x*x) = 0$, then it necessarily has a nonzero idempotent~\cite[(34.10)]{KMRT98}. Theorem~\ref{theorem:graph-of-orthogonality} describes the connected components of the orthogonality graph which are formed by such elements. Their number and diameters depend on the characteristic of the field~$\F$, the existence of a primitive cube root of unity~$\omega$ in~$\F$, and the parameter~$\beta$ which determines the multiplication table~\ref{table:okubo-algebra-isotropic} of an Okubo algebra with isotropic norm and nonzero idempotents. More specifically, if $\chrs \F \neq 3$ and $\omega \notin \F$, then these elements form several connected components, and each of them is a star graph, cf. Lemma~\ref{lemma:star-graph}. Otherwise, all elements such that $\n(x,x*x) = 0$ belong to the same connected component whose diameter equals~$5$ if the Okubo algebra is split, and~$3$ otherwise, see Lemmas~\ref{lemma:diameter-five} and~\ref{lemma:orthogonality-characteristic-three} and Example~\ref{example:distance-five}.

    In Section~4 we study the number of shortest paths between an arbitrary pair of vertices in the orthogonality graph. Corollary~\ref{corollary:unique-path} states that, in the case of the split Okubo algebra over a field~$\F$, $\chrs \F \neq 3$ and $\omega \in \F$, there exist at most two shortest paths between any two vertices, and completely describes the conditions under which the shortest path is unique. According to Proposition~\ref{proposition:unique-path} and Corollary~\ref{corollary:unique-path-three}, in the orthogonality graphs of all other Okubo algebras with isotropic norm, all shortest paths are unique. The most interesting case is the split Okubo algebra over a field of characteristic~$3$, since in this case zero divisors satisfying $x*x = 0$ turn out to be related to quadratic and singular idempotents, and thus they are naturally divided into two types, see Corollary~\ref{corollary:two-types-char-three}. The proof of this statement is based on the construction of the split Okubo algebra via a Petersson algebra associated to the Zorn vector-matrix algebra equipped with an automorphism of order~$3$.
    
    In Section~5 we establish a connection between the orthogonality graphs of Okubo algebras over fields which contain the primitive cube root of unity~$\omega$ and the orthogonality graphs of matrix rings. In particular, it is shown that the connected component that consists of the elements satisfying $\n(x,x*x) = 0$ is isomorphic to the subgraph of the orthogonality graph of $3 \times 3$ matrices on the set of nilpotent matrices, which was studied in detail by Bakhadly, Guterman, and Markova~\cite{Guterman-Markova_Orthogonality}. This gives an alternative proof of Theorem~\ref{theorem:graph-of-orthogonality} in the case when $\chrs \F \neq 3$ and $\omega \in \F$. On the other hand, the results of Section~4 show that the orthogonality graph of nilpotent $3 \times 3$ matrices over such a field~$\F$ is bigeodetic, see Remark~\ref{remark:nilpotent-bigeodetic}.

    In the Appendix~\ref{section:appexdix-program} we present a simple Wolfram Mathematica program for multiplication and norm computation in an arbitrary Okubo algebra with isotropic norm. It was used to verify all numerical calculations in this paper. 

    \section{Okubo algebras and their properties}
	
	\subsection{Construction of Okubo algebras}
	
	We use the definition of an Okubo algebra given in \cite[Section~1]{Eld99}. Assume first that the characteristic of the field~$\F$ is distinct from~$3$, and~$\F$ contains a primitive cube root of unity $\omega$. In particular, the second condition is satisfied if~$\F$ is algebraically closed. We set 
    \[
    \mu = \frac{1-\omega^2}{3}.
    \]
    Let $\frsl_3(\F)$ denote the Lie algebra of traceless $3 \times 3$ matrices over~$\F$. Define a new non-associative multiplication ``\(*\)'' on $\frsl_3(\F)$ by
    \begin{equation} \label{equation:product}
    x * y = \mu x y + (1-\mu) y x - \frac{\tr(xy)}{3} I,
    \end{equation}
    where $I \in M_3(\F)$ is the identity matrix. The resulting eight-dimensional algebra, denoted by $P_8(\F)$, is called the {\em pseudo-octonion algebra} over~$\F$.
    
    Consider then the quadratic form
    \begin{equation} \label{equation:norm-definition}
    \n(x) = \frac{1}{6}\tr(x^2)
    \end{equation}
    on the algebra $P_8(\F)$. As noted in \cite[p.~101]{Eld99}, this definition makes sense even when $\chrs \F = 2$. Indeed, by \cite[p.~1027, Lemma]{Fau88}, for any matrix $x \in M_3(\F)$ the identity
    \[
    \tr(x^2) = (\tr x)^2 - 2\s(x)
    \]
    holds, where $\s(x)$ is the quadratic form appearing as the coefficient of the characteristic polynomial
    \begin{equation} \label{equation:ch}
    \det(\lambda I - x) = \lambda^3 - \tr(x)\lambda^2 + \s(x)\lambda - \det(x).
    \end{equation}
    Thus $\tr(x^2)$ can be ``divided by 2'' for any $x \in \frsl_3(\F)$, and
    \begin{equation} \label{equation:norm-division-by-3}
    \n(x) = -\frac{1}{3}\s(x).
    \end{equation}
    
    \begin{definition}
    Let $(\A, +, *)$ be an arbitrary (possibly non-unital and non-associative) algebra over a field~$\F$. Assume that $\A$ is equipped with a strictly nondegenerate quadratic form $\n(\cdot)$, i.e., the associated symmetric bilinear form 
    \[
    \n(a,b) = \n(a+b) - \n(a) - \n(b)
    \]
    is nondegenerate on~$\A$. Then $\A$ is called a {\em composition algebra} if the quadratic form $\n(\cdot)$ admits composition, i.e., $\n(a*b) = \n(a)\n(b)$ for all $a,b \in \A$. Here $\n(\cdot)$ is called the {\em norm}.
    \end{definition}
    
    \begin{definition} \label{definition:symmetric-composition}
    A composition algebra $(\A,*,\n)$ is called {\em symmetric} if
    \begin{equation} \label{equation:symmetric}
    \n(x*y,z) = \n(x,y*z)
    \end{equation}
    for all $x,y,z \in \A$. 
    \end{definition}
    By \cite[Lemma~1.1]{EM93}, the algebra $P_8(\F)$ is a symmetric composition algebra. Note that Eq.~\eqref{equation:norm-definition} and the identity $\tr(xy) = \tr(yx)$ imply that
    \begin{equation} \label{equation:bilinear-form}
        \n(x,y) = \dfrac{1}{3}\tr(xy).
    \end{equation}
    
    \begin{theorem}[{\cite[Theorem~1]{Oku78D}}, {\cite[(34.1)]{KMRT98}}] \label{theorem:symmetric-flexible}
    Let $(\A,*,\n)$ be a composition algebra. The following conditions are equivalent:
    \begin{enumerate}[\rm (1)]
    \item $\A$ is symmetric;
    \item $(x*y)*x = x*(y*x) = \n(x)\,y$ for all $x,y \in \A$.
    \end{enumerate}
    \end{theorem}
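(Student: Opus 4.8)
The plan is to prove both implications by reducing each desired equality to a statement about the nondegenerate bilinear form $\scpr{\cdot}{\cdot}$, exploiting the partial polarizations of the composition law $\n(a*b)=\n(a)\n(b)$. First I would record these polarizations, which hold in an arbitrary composition algebra with no symmetry hypothesis: replacing $a$ by $a+a'$ in the composition law and subtracting the instances for $a$ and $a'$ yields $\scpr{a*b}{a'*b}=\scpr{a}{a'}\,\n(b)$, and polarizing the second argument instead yields $\scpr{a*b}{a*c}=\n(a)\,\scpr{b}{c}$. I will call these (L1) and (L2).

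For the implication $(1)\Rightarrow(2)$, I would first observe that the symmetry $\scpr{x*y}{z}=\scpr{x}{y*z}$, combined with the symmetry of the bilinear form, upgrades to full cyclic symmetry $\scpr{x*y}{z}=\scpr{y*z}{x}=\scpr{z*x}{y}$. Using the appropriate rotation I can then rewrite $\scpr{(x*y)*x}{z}=\scpr{x*z}{x*y}$ and apply (L2) with common first factor $x$ to get $\n(x)\,\scpr{y}{z}$; symmetrically, $\scpr{x*(y*x)}{z}=\scpr{z*x}{y*x}$, which reduces to $\n(x)\,\scpr{y}{z}$ by (L1) with common right factor $x$. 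Since these equalities hold for every $z$ and the form is nondegenerate, I conclude $(x*y)*x=x*(y*x)=\n(x)\,y$.

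For the converse $(2)\Rightarrow(1)$, the idea is to manufacture the cyclic relation after multiplying through by $\n(x)$. Applying (L1) with common right factor $x$ gives $\n(x)\,\scpr{x*y}{z}=\scpr{(x*y)*x}{z*x}$, and substituting the hypothesis $(x*y)*x=\n(x)\,y$ turns the right-hand side into $\n(x)\,\scpr{y}{z*x}$, so that $\n(x)\bigl(\scpr{x*y}{z}-\scpr{y}{z*x}\bigr)=0$. The one delicate point --- and the step I expect to be the main obstacle --- is cancelling the factor $\n(x)$, since for isotropic $x$ one cannot simply divide, and over fields of characteristic $2$ or $3$ or over finite fields the anisotropic vectors need not be Zariski dense in an exploitable way. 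I would bypass this uniformly by noting that every manipulation above is a formal multilinear identity, so the relation $\n(x)\bigl(\scpr{x*y}{z}-\scpr{y}{z*x}\bigr)=0$ holds identically in the symmetric algebra on $\A\oplus\A\oplus\A$; this ring is an integral domain in which the nondegenerate norm $\n$ is a nonzero element, so the factor $\n$ may be cancelled, giving $\scpr{x*y}{z}=\scpr{y}{z*x}$ for all $x,y,z$. Finally, relabelling $(x,y,z)\mapsto(y,z,x)$ and using the symmetry of the bilinear form converts this into $\scpr{x*y}{z}=\scpr{x}{y*z}$, which is precisely condition $(1)$.
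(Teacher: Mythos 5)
Your direction (1)$\Rightarrow$(2) is correct and complete; note that the paper gives no proof of this theorem (it is quoted from Okubo--Osborn and from KMRT~(34.1)), so the comparison is with the classical arguments, and there your first half coincides with them. The gap is in (2)$\Rightarrow$(1), at exactly the step you single out. The pointwise identity $\n(x)\bigl(\scpr{x*y}{z}-\scpr{y}{z*x}\bigr)=0$ is derived correctly, but your justification for cancelling $\n(x)$ is wrong as stated: this relation is \emph{not} multilinear --- it is cubic in $x$, and its ingredients (the composition law and $(x*y)*x=\n(x)\,y$) are quadratic in one of their variables --- and over a finite field an identity of degree at least $2$ in a variable that holds at every $\F$-point need not hold formally; for instance $a^2b+ab^2$ vanishes for all $a,b\in\F_2$ yet is a nonzero polynomial. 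So ``holds for all $x,y,z\in\A$'' does not by itself license rereading the relation in $\mathrm{Sym}(\A\oplus\A\oplus\A)$ and cancelling there. To make your generic-point argument rigorous you must first show that the composition law and hypothesis (2) persist under the base change $\A\otimes_\F\F[t_1,\dots,t_m]$; this is true, but only because their full polarizations, $\scpr{a*b}{c*d}+\scpr{a*d}{c*b}=\scpr{a}{c}\scpr{b}{d}$ and $(x*y)*z+(z*y)*x=\scpr{x}{z}\,y$, are again point identities which together with the diagonal values reconstruct the quadratic identities over any extension ring. That verification is the actual content of the step, and it is missing.

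There is a standard repair that avoids formalization altogether. Nondegeneracy forces $\n\not\equiv 0$ (otherwise the bilinear form would vanish identically), so fix one anisotropic $w$. By (2), $(w*u)*w=\n(w)\,u=w*(u*w)$, so $u\mapsto u*w$ is bijective, and writing $u=\n(w)^{-1}\bigl((w*u)*w\bigr)$ and applying your (L1) gives $\scpr{u}{c*w}=\n(w)^{-1}\scpr{(w*u)*w}{c*w}=\scpr{w*u}{c}$: the associativity identity holds when the middle element is $w$. Now linearize instead of dividing: polarizing (L1) in its repeated factor gives $\scpr{a*y}{c*w}+\scpr{a*w}{c*y}=\scpr{a}{c}\scpr{y}{w}$, while linearizing $x*(u*x)=\n(x)\,u$ and pairing with $c$ gives $\scpr{y*(a*w)}{c}+\scpr{w*(a*y)}{c}=\scpr{y}{w}\scpr{a}{c}$; subtracting and using the $w$-case just proved, the terms involving $w$ cancel, leaving $\scpr{y*(a*w)}{c}=\scpr{a*w}{c*y}$ for all $a,c,y$. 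Since every element has the form $a*w$, this is $\scpr{y*u}{c}=\scpr{u}{c*y}$ for all $u,y,c$, and your final relabelling turns it into condition (1). This route is uniform in the characteristic and the size of $\F$, needs no division by $\n(x)$ and no density, and does not even presuppose finite-dimensionality, which your symmetric-algebra device tacitly does.
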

    
    Any algebra~$\A$ which satisfies condition (2) of the theorem above is {\em flexible}. Namely, for all $x,y \in \A$ the flexibility identity holds:
    \[
    x*(y*x) = (x*y)*x.
    \]
    
    By linearizing the equality in Theorem~\ref{theorem:symmetric-flexible}(2), we obtain the identity
    \begin{equation} \label{equation:lin-of-symm}
    (x*y)*z + (z*y)*x = x*(y*z) + z*(y*x) = \n(x,z)y
    \end{equation}
    which will be heavily used throughout most of the work.
    
    Now assume that the field~$\F$ is not necessarily algebraically closed, and let $\overline{\F}$ be its algebraic closure. An algebra~$\A$ over~$\F$, $\chrs \F \neq 3$, is called an {\em Okubo algebra} if it is an~$\F$-form of $P_8(\overline{\F})$, i.e., 
    \(
    \A \otimes_{\F} \overline{\F} \;\cong\; P_8(\overline{\F}).
    \)
    
    \begin{example}
    The first Okubo algebras to be constructed \cite{Oku78} were the complex algebra $P_8(\C)$ and its real form $\widetilde{P}_8(\R)$, the latter being defined as the set of traceless Hermitian matrices:
    \[
    \widetilde{P}_8(\R) = \big\{ x \in M_3(\C) \mid x^* = x, \: \tr x = 0\big\}.
    \]
    The algebra $\widetilde{P}_8(\R)$ is called the {\em real pseudo-octonion algebra}.
    \end{example}
    
    If $\chrs \F = 3$, then a different approach is needed to define an Okubo algebra. It was first defined in~\cite{OO81b} by presenting an explicit multiplication table, and in~\cite{EP96} a more general approach was offered, based on the ideas of Petersson's construction~\cite{Pet69}. In any case, it is universally true that any Okubo algebra over an arbitrary field~$\F$ is a symmetric composition algebra. We will use $\Okubo$ to denote an arbitrary Okubo algebra.
    
    Note that any Okubo algebra $\Okubo$ satisfies the relation
    \begin{equation} \label{equation:xxxx}
    (x*x)*(x*x) = \n(x,x*x)\, x - \n(x)\,x*x
    \end{equation}
    for all $x \in \Okubo$, see \cite[(34.3)]{KMRT98}. Together with Theorem~\ref{theorem:symmetric-flexible}(2), this implies that the subalgebra generated by an element $x$ coincides with the linear span of $x$ and $x*x$.

    \subsection{Multiplication table of Okubo algebras with isotropic norm}

    A special role in the study of Okubo algebras is played by algebras of the form $\Okubo_{\alpha,\beta}$. Given nonzero $\alpha, \beta \in \F$, the multiplication table of $\Okubo_{\alpha,\beta}$ is described by Table~\ref{table:okubo-algebra-isotropic}, while basis elements satisfy the conditions~\cite[pp. 3-4]{Eld23}:
    \begin{align}
        &\n(z_{i, j}) = 0, \label{equation:norm-of-basis}\\
        &\n(z_{i, j}, z_{i', j'}) = 
        \begin{cases} 
        \alpha^{(i+i')/3} \beta^{(j+j')/3} & \text{if } i + i' \equiv  j + j' \equiv 0 \: (\text{mod } 3), \\
        0, & \text{otherwise},
        \end{cases} \label{equation:scpr-of-basis}
    \end{align}
    where $0 \leq i, j, i', j' \leq 2$ and $(i, j) \neq (0,0) \neq (i', j').$

    \begin{table}[H]
    \centering
    $
    \begin{array}{|c||c|c|c|c|c|c|c|c|}
    \hline
    \vphantom{\Big|} * & z_{1,0}  & z_{2,0} & z_{0,1} & z_{0,2}&  z_{1,1}& z_{2,2} & z_{1,2} & z_{2,1} \\\hline\hline
    \vphantom{\Big|} z_{1,0} & z_{2,0} & 0 & -z_{1,1} & 0 &-z_{2,1} & 0 & 0 & -\alpha z_{0,1}\\\hline
    \vphantom{\Big|} z_{2,0} & 0 & \alpha z_{1,0} & 0 & -z_{2,2} & 0 & -\alpha z_{1,2} & -\alpha z_{0,2} & 0\\\hline
    \vphantom{\Big|} z_{0,1} & 0 & -z_{2,1} & z_{0,2} & 0 & 0 & -\beta z_{2,0} & 0 & - z_{2,2}\\\hline
    \vphantom{\Big|} z_{0,2} & -z_{1,2} & 0 & 0 & \beta z_{0,1} & -\beta z_{1,0} & 0 & -\beta z_{1,1} & 0\\\hline
    \vphantom{\Big|} z_{1,1} & 0 & -\alpha z_{0,1} & -z_{1,2}& 0 & z_{2,2} & 0 & -\beta z_{2,0} & 0\\\hline
    \vphantom{\Big|} z_{2,2} & -\alpha z_{0,2} & 0 & 0 & -\beta z_{2,1} & 0 & \alpha\beta z_{1,1} & 0 & -\alpha\beta z_{1,0}\\\hline
    \vphantom{\Big|} z_{1,2} & -z_{2,2} & 0 & -\beta z_{1,0}& 0 & 0 & -\alpha\beta z_{0,1} & \beta z_{2,1} & 0\\\hline
    \vphantom{\Big|} z_{2,1} & 0 & -\alpha z_{1,1} & 0 & -\beta z_{2,0} & -\alpha z_{0,2} & 0 & 0 & \alpha z_{1,2}\\\hline
    \end{array}
    $
    \caption{\label{table:okubo-algebra-isotropic} Multiplication table of the Okubo algebra $\Okubo_{\alpha,\beta}$.}
    \end{table}

    \begin{definition}
    The Okubo algebra $\Okubo_{1,1}$ is called the {\em split} Okubo algebra.
    \end{definition}

    \begin{proposition}[{\cite[pp.~3--4]{Eld23}}] \label{proposition:split-pseudo-octonion}
        Let $\chrs \F \neq 3$, and $\F$ contains a primitive cube root of unity~$\omega$. Then $\Okubo_{1,1}$ is isomorphic to the pseudo-octonion algebra $P_8(\F)$.
    \end{proposition}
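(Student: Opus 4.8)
The plan is to realize the abstract algebra $\Okubo_{1,1}$ concretely inside $P_8(\F)=(\frsl_3(\F),*)$ by exhibiting a basis of $\frsl_3(\F)$ that obeys Table~\ref{table:okubo-algebra-isotropic} together with the norm relations \eqref{equation:norm-of-basis}--\eqref{equation:scpr-of-basis} for $\alpha=\beta=1$. The $\mathbb{Z}_3\times\mathbb{Z}_3$ pattern of the table strongly suggests using the generalized Pauli (Weyl) basis. So first I would fix the cyclic shift $\rho$ and the diagonal matrix $\sigma=\diag(1,\omega,\omega^2)$, which satisfy $\rho^3=\sigma^3=I$ and $\rho\sigma=\omega\sigma\rho$, and set $e_{i,j}=\sigma^i\rho^j$ for $(i,j)\in\{0,1,2\}^2$. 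Since $e_{i,j}$ is traceless whenever $(i,j)\neq(0,0)$ and $e_{0,0}=I$, the eight matrices $e_{i,j}$ with $(i,j)\neq(0,0)$ form a basis of $\frsl_3(\F)$.

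The key computation is the associative product $e_{i,j}e_{k,l}=\omega^{jk}e_{i+k,\,j+l}$ (indices read modulo $3$), together with $\tr(e_{a,b})=3$ if $a\equiv b\equiv 0\pmod 3$ and $0$ otherwise. Substituting into \eqref{equation:product} gives, for $(i+k,j+l)\not\equiv(0,0)$,
\[
e_{i,j}*e_{k,l}=\bigl(\mu\,\omega^{jk}+(1-\mu)\,\omega^{li}\bigr)\,e_{i+k,\,j+l},
\]
and $e_{i,j}*e_{-i,-j}=0$, so the $*$-product is $\mathbb{Z}_3\times\mathbb{Z}_3$-graded with $d(e_{i,j})=(i,j)$, exactly as in the table. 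Using $\mu=\tfrac{1-\omega^2}{3}$ and $1+\omega+\omega^2=0$ one checks that the scalar $\mu\,\omega^{jk}+(1-\mu)\,\omega^{li}$ is always either $0$ or a power of $-\omega$; for instance $\mu+(1-\mu)\omega=-\omega^2$ while $\mu+(1-\mu)\omega^2=0$. The vanishing cases reproduce precisely the zero entries of Table~\ref{table:okubo-algebra-isotropic} and the nonzero cases its nonzero entries up to a unit factor. Likewise \eqref{equation:bilinear-form} yields $\n(e_{i,j},e_{i',j'})=\omega^{ji'}$ when $(i+i',j+j')\equiv(0,0)$ and $0$ otherwise, with $\n(e_{i,j})=0$, matching the support of \eqref{equation:norm-of-basis}--\eqref{equation:scpr-of-basis}.

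It then remains to absorb the remaining unit factors by passing to a rescaled basis $z_{i,j}=\lambda_{i,j}\,e_{i,j}$ with suitable nonzero scalars $\lambda_{i,j}\in\F$: under rescaling the structure constant of $z_{i,j}*z_{k,l}$ is multiplied by $\lambda_{i,j}\lambda_{k,l}/\lambda_{i+k,\,j+l}$ and the form value $\n(z_{i,j},z_{i',j'})$ by $\lambda_{i,j}\lambda_{i',j'}$. I would solve the resulting multiplicative system so that every nonzero product matches the corresponding entry of Table~\ref{table:okubo-algebra-isotropic} and every pairing $\n(z_{i,j},z_{-i,-j})$ equals $1$; the linear map sending the abstract generator $z_{i,j}$ of $\Okubo_{1,1}$ to $\lambda_{i,j}\,e_{i,j}$ is then the desired isomorphism.

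The main obstacle is the consistency of this rescaling system: the $\lambda_{i,j}$ must simultaneously fit the product table and the bilinear form, which is a cocycle-type condition on the $\mathbb{Z}_3\times\mathbb{Z}_3$ grading. I expect no genuine obstruction, since the product and the form of a symmetric composition algebra are linked by \eqref{equation:symmetric} and the composition law, so the graded constraints are automatically compatible once a consistent choice is made along a generating set; concretely, normalizing $\lambda_{1,0}=\lambda_{0,1}=1$ determines the rest, and the finitely many remaining identities can be verified directly, for instance with the Wolfram Mathematica routine of Appendix~\ref{section:appexdix-program}. An alternative, less computational route would invoke the uniqueness of the split symmetric composition algebra of dimension $8$: both $P_8(\F)$ (for $\omega\in\F$) and $\Okubo_{1,1}$ have isotropic norm and become isomorphic to $P_8(\overline{\F})$ after scalar extension, hence are already isomorphic over $\F$.
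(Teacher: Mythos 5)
Your main route is essentially the construction behind the paper's citation: the proposition is quoted from \cite[pp.~3--4]{Eld23} without proof, and there, as in your proposal, the basis $z_{i,j}$ is realized inside $\frsl_3(\F)$ via the generalized Pauli matrices $\sigma^i\rho^j$. Your computation is sound (up to the harmless convention ambiguity in $\rho\sigma=\omega\sigma\rho$): the structure constant $\mu\omega^{jk}+(1-\mu)\omega^{li}$ equals $\omega^{jk}$ if $jk\equiv li$, vanishes if $jk\equiv li+1$, and equals $-\omega^{jk+2}$ if $li\equiv jk+1\pmod 3$, which reproduces exactly the support pattern of Table~\ref{table:okubo-algebra-isotropic}. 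The rescaling step that you leave as an expectation does close, and you can close it explicitly rather than by machine search: take $\lambda_{i,j}=\omega^{-ij}$, i.e.\ $z_{i,j}=\omega^{-ij}\sigma^i\rho^j$. Then for a ``diagonal'' product ($(k,l)=(i,j)$) the total coefficient is $\omega^{2ij}\cdot\omega^{ij}=\omega^{3ij}=1$; for an off-diagonal nonzero product ($li\equiv jk+1$) it is $-\omega^{2jk+il+2}=-\omega^{3jk+3}=-1$; and $\n(z_{i,j},z_{-i,-j})=\omega^{-ij}\omega^{-ij}\omega^{-ij}=1$. This matches Table~\ref{table:okubo-algebra-isotropic} with $\alpha=\beta=1$ (all diagonal entries $+1$, all off-diagonal nonzero entries $-1$) together with \eqref{equation:norm-of-basis}--\eqref{equation:scpr-of-basis}, so your first three paragraphs, completed by this one choice of scalars, constitute a full proof.

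Your ``alternative, less computational route'' is, however, genuinely wrong and you should discard it. Isotropy of the norm together with isomorphism after scalar extension does \emph{not} determine the $\F$-form: by Proposition~\ref{proposition:omega-isotropic}(1), when $\chrs\F\neq3$ and $\omega\in\F$, \emph{every} Okubo algebra over~$\F$ has isotropic norm, including the non-split ones arising from division symbol algebras in the classification of~\cite{EM93}; all of these become $P_8(\overline{\F})$ after extension to $\overline{\F}$, yet they are not isomorphic to $P_8(\F)$ (they contain no nonzero idempotents, by Proposition~\ref{proposition:omega-isotropic}(2)). The uniqueness you need requires a nonzero idempotent, i.e.\ Proposition~\ref{proposition:omega-isotropic}(2) combined with Theorem~\ref{theorem:table-of-isotropic-Okubo-with-idempotent}; but within this paper's logical order that statement is itself deduced from the proposition you are proving (see the remark following Proposition~\ref{proposition:omega-isotropic}), so invoking it here would in addition be circular. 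Keep the computational route.
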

    
    If the field~$\F$ is algebraically closed, then $\Okubo_{1,1}$ is the unique Okubo algebra over~$\F$ up to isomorphism.
    Obviously, the norm $\n(\cdot)$ on an Okubo algebra over an algebraically closed field~$\F$ is {\em isotropic}, that is, there exists a nonzero element $x \in \Okubo$ such that $\n(x) = 0$. According to the following theorem, Okubo algebras with isotropic norm over an arbitrary field are exactly the algebras of the form $\Okubo_{\alpha,\beta}$.
    
    \begin{theorem}[{\cite[Theorem~7]{Eld99}, \cite[Theorem~4.2]{Eld23}}] \label{theorem:isotropic-norm}
        An Okubo algebra over a field~$\F$ has isotropic norm if and only if it is isomorphic to $\Okubo_{\alpha,\beta}$ for some $\alpha, \beta \in \F \setminus \{ 0 \}$.
    \end{theorem}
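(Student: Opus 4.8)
The plan is to prove the two implications separately, with the reverse direction being essentially immediate and the forward direction carrying all the weight. For the \emph{if} part, suppose $\Okubo \cong \Okubo_{\alpha,\beta}$. Then by \eqref{equation:norm-of-basis} the basis vector $z_{1,0}$ is a nonzero element with $\n(z_{1,0}) = 0$, so the norm is isotropic; here one uses that $\Okubo_{\alpha,\beta}$ is genuinely an Okubo algebra, as recorded in the construction preceding Table~\ref{table:okubo-algebra-isotropic}. All the difficulty lies in the \emph{only if} part: assuming that $\Okubo$ carries an isotropic norm, the goal is to manufacture a basis realizing Table~\ref{table:okubo-algebra-isotropic} together with \eqref{equation:norm-of-basis}--\eqref{equation:scpr-of-basis}.

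The first and hardest step is to produce a nonzero idempotent $e \in \Okubo$. I would start from a nonzero isotropic vector $x$ and analyze the subalgebra $\spn\{x, x*x\}$: by Theorem~\ref{theorem:symmetric-flexible}(2) we have $(x*x)*x = x*(x*x) = \n(x)\,x = 0$, while \eqref{equation:xxxx} gives $(x*x)*(x*x) = \scpr{x}{x*x}\,x$. Searching for an idempotent of the form $a x + b\,(x*x)$ then reduces to the scalar system $a = b^2 \scpr{x}{x*x}$, $b = a^2$, which is solvable precisely when $\scpr{x}{x*x}^{-1}$ is a cube in~$\F$. Since this need not hold for a single vector, the genuine content of this step is to leverage isotropy of the whole norm form, not just of one vector: I expect to argue that the scheme of nonzero idempotents is defined over~$\F$ and is nonempty because the norm is isotropic, passing to $\overline{\F}$, where $\Okubo \otimes_{\F} \overline{\F} \cong P_8(\overline{\F})$ by definition of an Okubo algebra and Proposition~\ref{proposition:split-pseudo-octonion}, to locate idempotents and then descending. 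This is the main obstacle, and it is exactly where isotropic Okubo algebras part ways with anisotropic ones such as the real pseudo-octonions.

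Granting a nonzero idempotent $e$, the composition law gives $\n(e) = \n(e)^2$, and Theorem~\ref{theorem:symmetric-flexible}(2) applied to $e*e = e$ forces $\n(e) = 1$; the same theorem yields $(e*y)*e = e*(y*e) = y$, so the left and right multiplications $L_e$ and $R_e$ are mutually inverse. Next I would extract the fine $\mathbb{Z}_3 \times \mathbb{Z}_3$-grading. A cubic relation on $L_e$, obtained by feeding $e$ and its Peirce components into \eqref{equation:lin-of-symm} and \eqref{equation:xxxx}, shows that when $\omega \in \F$ the operator $L_e$ is diagonalizable with eigenvalues among $1, \omega, \omega^2$, producing one $\mathbb{Z}_3$-grading; a second commuting order-$3$ symmetry refines this to a grading with one-dimensional homogeneous components indexed by $(i,j) \in \mathbb{Z}_3 \times \mathbb{Z}_3$, $(i,j) \neq (0,0)$. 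The homogeneous generators are the candidate vectors $z_{i,j}$.

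Finally, because the grading is effected by automorphisms preserving $\n$, distinct components $\Okubo_{(i,j)}$ and $\Okubo_{(i',j')}$ are orthogonal unless $(i+i', j+j') \equiv (0,0) \pmod 3$, which is precisely the shape of \eqref{equation:scpr-of-basis}; rescaling the generators then lets me read off the two parameters $\alpha = \scpr{z_{1,0}}{z_{2,0}}$ and $\beta = \scpr{z_{0,1}}{z_{0,2}}$, well defined up to cubes, and a direct but routine check that the remaining structure constants are forced by the symmetric composition identities matches them against Table~\ref{table:okubo-algebra-isotropic}. For the residual cases --- characteristic~$3$, where no primitive cube root of unity exists, and characteristic $\neq 3$ with $\omega \notin \F$ --- I would replace the eigenspace decomposition of $L_e$ by the weight-space decomposition of an order-$3$ automorphism arising from the Petersson-type presentation of \cite{OO81b, EP96}, or argue by descent from $\overline{\F}$; the grading and the subsequent normalization go through verbatim, again collapsing all structure constants onto the single pair $(\alpha, \beta)$.
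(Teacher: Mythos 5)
You should first be aware that the paper offers no proof of this statement: it is imported wholesale from \cite{Eld99} and \cite{Eld23}, so your proposal has to stand on its own --- and it does not. The fatal gap is your first step, where you claim that isotropy of the norm lets you manufacture a nonzero idempotent $e \in \Okubo$ by descent from $\overline{\F}$. This is false, and the paper itself records the counterexamples: by Theorem~\ref{theorem:table-of-isotropic-Okubo-with-idempotent}, an Okubo algebra with isotropic norm contains a nonzero idempotent only when it is isomorphic to $\Okubo_{1,1}$ (for $\chrs \F \neq 3$) or $\Okubo_{1,\beta}$ (for $\chrs \F = 3$), whereas a general $\Okubo_{\alpha,\beta}$ has isotropic norm --- Eq.~\eqref{equation:norm-of-basis} gives $\n(z_{1,0}) = 0$ --- and no nonzero idempotent at all. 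Concretely, if $\chrs \F \neq 3$ and $\omega \in \F$, Proposition~\ref{proposition:omega-isotropic}(1) says \emph{every} Okubo algebra over~$\F$ has isotropic norm, including the non-split ones arising from central division algebras of degree~$3$ (e.g.\ over $\F = \mathbb{Q}(\omega)$), and Proposition~\ref{proposition:omega-isotropic}(2) shows precisely that those contain no nonzero idempotent. Your own scalar computation already explains why descent must fail: solving $a = b^2 \scpr{x}{x*x}$, $b = a^2$ requires $\scpr{x}{x*x}$ to be a cube in $\F^{\times}$, an arithmetic condition that always holds over $\overline{\F}$ but not over $\F$; an $\F$-scheme with $\overline{\F}$-points need not have $\F$-points, and here it genuinely does not.

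Because of this, the whole architecture collapses: the $\mathbb{Z}_3 \times \mathbb{Z}_3$-grading you build from eigenspaces of $L_e$ cannot even start in the typical isotropic case, and it additionally presupposes $\omega \in \F$ (and fails outright in characteristic~$3$, where an order-$3$ automorphism is unipotent and admits no eigenspace decomposition, so the ``verbatim'' claim in your last paragraph is also unjustified). The actual proofs in \cite{Eld99, Eld23} run the other way: one starts from a nonzero isotropic $x$ and works with the pair $x$, $x*x$ directly. Note that in Table~\ref{table:okubo-algebra-isotropic} the generator $z_{1,0}$ satisfies $z_{1,0}*z_{1,0} = z_{2,0}$ and $\n(z_{1,0}, z_{2,0}) = \alpha \neq 0$, so the basis is built from isotropic vectors with $\n(x, x*x) \neq 0$, not from idempotents; the remaining basis elements and structure constants are then forced by the symmetric composition identities such as~\eqref{equation:lin-of-symm}, with $\alpha = \n(x, x*x)$ and $\beta$ extracted from a second suitably chosen isotropic vector, and neither an idempotent nor a root of unity ever enters. (Indeed, in an idempotent-free Okubo algebra Proposition~\ref{proposition:idempotent-existence} forces $\n(x, x*x) \neq 0$ for every nonzero~$x$, so such generators are the generic, not the exceptional, case.) Your ``if'' direction is fine, but the ``only if'' direction rests on a step that is not merely unproved but false.
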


    In the case when an Okubo algebra with isotropic norm contains a nonzero idempotent, its multiplication table can be simplified even more.

    \begin{theorem}[{\cite[Theorem~3.18]{Eld09}, \cite[Theorem~5.9]{Eld18}}] \label{theorem:table-of-isotropic-Okubo-with-idempotent}
		An Okubo algebra $\Okubo$ with isotropic norm over an arbitrary field~$\F$ contains a nonzero idempotent if and only if it is isomorphic to
		\begin{enumerate} [\rm (1)]
			\item $\Okubo_{1,1}$ if $\chrs \F \neq 3$;
			\item $\Okubo_{1,\beta}$ for some $\beta \in \F \setminus \{0\}$ if $\chrs \F = 3$. 
		\end{enumerate}
	\end{theorem}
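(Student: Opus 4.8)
The plan is to first reduce to the parametrized algebras and settle the easy implication by an explicit idempotent, and then obtain the converse through the correspondence between idempotents in symmetric composition algebras and octonion (Hurwitz) structures, via Kaplansky's doubling trick and Petersson's construction. By Theorem~\ref{theorem:isotropic-norm} I may assume from the outset that $\Okubo = \Okubo_{\alpha,\beta}$, so the statement becomes a normalization question: an idempotent exists precisely when $\alpha$ can be rescaled to $1$ (and, in characteristic $\neq 3$, when moreover $\Okubo_{1,\beta}\cong\Okubo_{1,1}$). I would begin by recording that any nonzero idempotent $e$ has $\n(e)=1$: from Theorem~\ref{theorem:symmetric-flexible}(2) one has $(e*e)*e=\n(e)\,e$, while the left-hand side equals $e*e=e$, whence $\n(e)=1$.

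For the ``if'' direction I would exhibit an idempotent directly from Table~\ref{table:okubo-algebra-isotropic}. Taking $e=z_{1,0}+z_{2,0}$ and reading off the relevant entries gives $e*e=z_{1,0}*z_{1,0}+z_{2,0}*z_{2,0}=z_{2,0}+\alpha z_{1,0}$, which equals $e$ as soon as $\alpha=1$; note that no division occurs, so this is valid in every characteristic. By~\eqref{equation:norm-of-basis} and~\eqref{equation:scpr-of-basis} we also get $\n(e)=\scpr{z_{1,0}}{z_{2,0}}=\alpha=1$, consistent with the previous paragraph. To pass from $\alpha=1$ to a cube $\alpha$, I would use the $\mathbb{Z}/3\times\mathbb{Z}/3$-grading visible in Table~\ref{table:okubo-algebra-isotropic}: rescaling the graded pieces by scalars $\lambda_{i,j}$ with $\lambda_{i,j}\lambda_{k,l}=\lambda_{i+k,j+l}$ preserves the shape of the multiplication table and replaces $\alpha$ by $a^3\alpha$ (and $\beta$ by $b^3\beta$). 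Hence $\Okubo_{\alpha,\beta}\cong\Okubo_{1,\beta}$ whenever $\alpha\in(\F^\ast)^3$, which together with the identification $\Okubo_{1,\beta}\cong\Okubo_{1,1}$ in characteristic $\neq 3$ (justified below) yields both cases~(1) and~(2).

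For the converse, suppose $\Okubo$ carries a nonzero idempotent $e$, so $\n(e)=1$. I would apply Kaplansky's trick: the new product $x\cdot y=(e*x)*(y*e)$ makes $(\Okubo,\cdot,\n)$ a unital composition algebra with unit $e$, since the flexible identity from Theorem~\ref{theorem:symmetric-flexible}(2) gives $e*(x*e)=\n(e)\,x=x$ and $(e*x)*e=\n(e)\,x=x$, while $\n(x\cdot y)=\n(e*x)\,\n(y*e)=\n(x)\,\n(y)$ by composition of the norm. Thus $(\Okubo,\cdot)$ is an $8$-dimensional Hurwitz algebra, i.e.\ an octonion algebra, with the same norm $\n$; as $\n$ is isotropic and a Hurwitz algebra is determined by its norm form, this is the \emph{split} octonion algebra $C$. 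Finally I would invoke Petersson's construction to recover the original multiplication in the form $x*y=\tau(\bar x)\cdot\tau^2(\bar y)$ for an automorphism $\tau$ of $C$ of order dividing $3$ (here $\bar{\cdot}$ is the octonion conjugation), so that $\Okubo$ is a Petersson algebra of the split octonions.

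It then remains to classify the order-$3$ automorphisms $\tau$ of $C$ up to the equivalence producing isomorphic Petersson algebras and to match each class with a normalized pair $(\alpha,\beta)$, and this is where the two characteristics genuinely diverge: for $\chrs\F\neq 3$ one uses $\omega$ (after base change to $\F(\omega)$ and Galois descent when $\omega\notin\F$) to show that all relevant $\tau$ yield $\Okubo\cong\Okubo_{1,1}$, which in particular forces the identification $\Okubo_{1,\beta}\cong\Okubo_{1,1}$ used above, whereas for $\chrs\F=3$ the automorphisms arise from the Zorn vector-matrix realization of $C$ and are classified, up to equivalence, by $\beta\in\F^\ast/(\F^\ast)^3$, giving exactly the algebras $\Okubo_{1,\beta}$. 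I expect this last classification to be the main obstacle: one must verify that the presence of an idempotent forces $\alpha\in(\F^\ast)^3$ (so that $\Okubo\cong\Okubo_{1,\beta}$), that in characteristic $\neq 3$ the parameter $\beta$ collapses completely (so that one always lands in $\Okubo_{1,1}$, reflecting that the associated degree-$3$ symbol algebra $(\alpha,\beta)_\omega$ splits once a $1$ appears), and that in characteristic $3$ the cube class of $\beta$ is a genuine invariant that cannot be normalized further. By comparison, the norm-$1$ observation and the Kaplansky doubling are routine.
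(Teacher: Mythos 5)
You should first note that the paper contains no proof of this theorem: it is imported verbatim from \cite[Theorem~3.18]{Eld09} and \cite[Theorem~5.9]{Eld18}, so your attempt has to be measured against Elduque's argument, whose machinery the paper itself quotes in Section~4. Your skeleton does follow that route, and the parts you actually carry out are correct: the reduction to $\Okubo_{\alpha,\beta}$ via Theorem~\ref{theorem:isotropic-norm}; the observation $\n(e)=1$ for any nonzero idempotent; the idempotent $e=z_{1,0}+z_{2,0}$ in $\Okubo_{1,\beta}$ (the same element the paper exhibits), which settles the ``if'' direction in both characteristics; the graded rescaling $z_{i,j}\mapsto c^{i}d^{j}z_{i,j}$, which indeed changes $(\alpha,\beta)$ exactly by cubes (the paper uses precisely such a map inside the proof of Lemma~\ref{lemma:orthogonality-characteristic-three}); and the passage from an idempotent to a Hurwitz product $x\cdot y=(e*x)*(y*e)$ with unit~$e$ and back to $x*y=\tau(\overline{x})\cdot\tau^{2}(\overline{y})$, which is literally Proposition~\ref{proposition:reverse-Petersson}, with splitness of the resulting octonion algebra correctly deduced from isotropy of the Pfister norm form.

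However, all of that is the routine half, and the theorem's actual content is the step you leave as a declared expectation: classifying the order-$3$ automorphisms $\tau$ of the Zorn algebra up to the equivalence producing isomorphic Petersson algebras, and matching the classes with the normal forms $\Okubo_{1,1}$, respectively $\Okubo_{1,\beta}$ with $\beta$ ranging over $\F^{\ast}/(\F^{\ast})^{3}$. In characteristic~$3$ this is exactly \cite[Theorem~6.3]{Eld18} (of which the paper quotes item~(i) as Theorem~\ref{theorem:tau-quaternionic}), so your proposal assumes the cited result rather than proving it. In characteristic $\neq 3$ with $\omega\notin\F$, the one mechanism you name --- ``base change to $\F(\omega)$ and Galois descent'' --- cannot work as stated: having an idempotent and splitting over $\F(\omega)$ does not force $\Okubo\cong\Okubo_{1,1}$, since the real division algebra $\widetilde{P}_8(\R)$ contains the nonzero idempotent $\diag(-1,-1,2)$ and satisfies $\widetilde{P}_8(\R)\otimes_{\R}\C\cong P_8(\C)$, yet is not split because its norm is anisotropic over~$\R$. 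So the descent must operate at the level of automorphisms of the \emph{already split} octonion algebra over~$\F$, i.e., the isotropy of $\n$ over~$\F$ must be woven into the classification of $\tau$ itself (via its fixed subalgebra and eigenspace structure, as Elduque does), and your sketch offers no mechanism for this. In short: a correct road map in which the destination --- the two cited theorems --- is assumed rather than reached.
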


    \begin{remark}
    Since the algebra $\Okubo_{1,\beta}$ has isotropic norm and contains a nonzero idempotent (e.g., one can take $z_{1,0} + z_{2,0}$), it is isomorphic to $\Okubo_{1,1}$ when $\chrs \F \neq 3$. Hence in what follows we will use the isomorphism between the algebra $\Okubo_{1,\beta}$ and an arbitrary Okubo algebra with isotropic norm and nonzero idempotents, regardless of the characteristic of the field~$\F$.
    \end{remark}

    In the case when $\chrs \F = 3$, the split Okubo algebra $\Okubo_{1,1}$ can also be characterized in terms of idempotents.

    \begin{definition}[{\cite[Definition~22]{Eld15}, \cite[Definition~6.2]{Eld18}}] \label{definition:idempotents}
    Let $\chrs \F = 3$, $\Okubo$ be an Okubo algebra over~$\F$, and $f \in \Okubo$ be an idempotent. Then $f$ is said to be:
    \begin{itemize}
        \item {\em quaternionic}, if its centralizer contains a para-quaternion algebra;
        \item {\em quadratic}, if its centralizer contains a para-quadratic algebra and no para-quater\-nion subalgebra;
        \item {\em singular}, otherwise.
    \end{itemize}
    \end{definition}

    \begin{theorem}[{\cite[Proposition~9.9 and Theorem~9.13]{CEKT13}, \cite[Lemma~21]{Eld15}, \cite[Corollary~6.5]{Eld18}}]
    Let $\chrs \F = 3$, and $\Okubo$ be an Okubo algebra over~$\F$. Then the following conditions are equivalent:
    \begin{enumerate}[{\rm (1)}]
        \item $\Okubo$ is the split Okubo algebra;
        \item the norm on~$\Okubo$ is isotropic, and $\Okubo$ contains a quaternionic idempotent;
        \item the norm on~$\Okubo$ is isotropic, and $\Okubo$ contains a singular idempotent.
    \end{enumerate}
    \end{theorem}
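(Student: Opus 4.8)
The plan is to establish the two equivalences $(1)\Leftrightarrow(2)$ and $(1)\Leftrightarrow(3)$ by reducing every condition to the explicit model $\Okubo_{1,\beta}$. The key preliminary remark is that all three conditions force the norm to be isotropic together with the presence of a nonzero idempotent: in (2) and (3) this is assumed outright, while in (1) the split algebra $\Okubo_{1,1}$ has isotropic norm by~\eqref{equation:norm-of-basis} and contains the nonzero idempotent $z_{1,0}+z_{2,0}$ (one checks from Table~\ref{table:okubo-algebra-isotropic} with $\alpha=1$ that it is fixed by squaring). Hence throughout I may invoke Theorem~\ref{theorem:table-of-isotropic-Okubo-with-idempotent} and assume $\Okubo\cong\Okubo_{1,\beta}$, so that the whole statement becomes the problem of determining, as a function of $\beta$, which types of idempotents occur in $\Okubo_{1,\beta}$; here the split case is singled out by a cube condition on $\beta$ (equivalently $\Okubo_{1,\beta}\cong\Okubo_{1,1}$).

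For $(1)\Rightarrow(2)$ and $(1)\Rightarrow(3)$ I would argue constructively inside $\Okubo_{1,1}$. Recall first that every idempotent $e$ satisfies $\n(e)=1$, since $(e*e)*e=\n(e)\,e$ by Theorem~\ref{theorem:symmetric-flexible}(2) and $e*e=e$. Using this constraint together with Table~\ref{table:okubo-algebra-isotropic} at $\alpha=\beta=1$, I would write down the nonzero idempotents explicitly. For a suitable idempotent $f$ I would then compute its centralizer $C(f)=\{x\in\Okubo : f*x=x*f\}$ from the table and locate inside it a four-dimensional subalgebra closed under $*$ which is a para-quaternion algebra, exhibiting $f$ as quaternionic; a second, carefully chosen idempotent whose centralizer contains neither a para-quaternion nor a para-quadratic subalgebra would witness the singular type (for the latter I expect the relevant centralizer to be totally isotropic for the form~\eqref{equation:scpr-of-basis}, hence to admit no para-quadratic subalgebra). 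The computations are routine given the table, though the bookkeeping in characteristic~$3$ must be handled with care.

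The substantive direction is the converse: I must show that in $\Okubo_{1,\beta}$ a quaternionic or a singular idempotent can exist only when $\beta$ is a cube. The strategy is to parametrize all idempotents of $\Okubo_{1,\beta}$, restrict the product $*$ and the norm~\eqref{equation:scpr-of-basis} to each centralizer $C(f)$, and determine the isomorphism type of the resulting symmetric composition subalgebra. The anticipated outcome is that the norm form carried by $C(f)$ represents a value governed by $\beta$ modulo cubes; it becomes split, so that $C(f)$ contains a para-quaternion algebra (forcing the quaternionic type) or acquires the degeneracy characteristic of the singular type, exactly when $\beta$ is a cube, and otherwise only a para-quadratic subalgebra survives, making $f$ merely quadratic. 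This pins the isomorphism class of $\Okubo_{1,\beta}$ to the split one in both cases.

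The main obstacle I anticipate is precisely this last classification: identifying the isomorphism type of the composition subalgebra sitting inside each $C(f)$ and proving that the transition from ``para-quadratic only'' to ``para-quaternion present'' is controlled sharply by the cube class of $\beta$. An alternative and likely cleaner route, which avoids much of the raw table manipulation, is to realize the split Okubo algebra in characteristic~$3$ through Petersson's construction applied to the Zorn vector-matrix algebra equipped with an automorphism of order~$3$; in that model the idempotents and their centralizers can be read off directly from the octonion data and the fixed structure of the automorphism, making the appearance of quaternionic and singular idempotents transparent in the split case and simultaneously ruling them out in the non-split forms.
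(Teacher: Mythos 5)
The paper itself offers no proof of this statement---it is imported verbatim from \cite[Proposition~9.9 and Theorem~9.13]{CEKT13}, \cite[Lemma~21]{Eld15} and \cite[Corollary~6.5]{Eld18}---so your attempt can only be measured against that cited literature, whose actual method is precisely the Petersson--Zorn analysis you relegate to an ``alternative route''. Your logical skeleton is sound: every one of (1)--(3) supplies an isotropic norm and a nonzero idempotent, so Theorem~\ref{theorem:table-of-isotropic-Okubo-with-idempotent} reduces everything to $\Okubo_{1,\beta}$; only the easy implication ``$\beta$ a cube in $\F$ implies $\Okubo_{1,\beta}\cong\Okubo_{1,1}$'' (via $z_{i,j}\mapsto k^{j}z_{i,j}$) is needed for your framing of the converse; and $\n(e)=1$ for idempotents follows correctly from $(e*e)*e=\n(e)\,e$.

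The genuine gap is that everything substantive is deferred rather than proved. You exhibit neither a quaternionic nor a singular idempotent in $\Okubo_{1,1}$ (the quaternionic one is $e=z_{1,0}+z_{2,0}+z_{0,1}+z_{0,2}+z_{1,1}+z_{2,2}+z_{1,2}+z_{2,1}$, and the singular ones are exactly $e+x$ with $0\neq x\in C_{\Okubo}(e)\cap C_{\Okubo}(e)^{\perp}$, cf. Proposition~\ref{proposition:idempotents-relation}), and, more seriously, you do not prove the decisive converse that for $\beta$ not a cube in $\F$ every nonzero idempotent of $\Okubo_{1,\beta}$ is quadratic---you yourself flag this as the ``main obstacle''. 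Since that classification of idempotent types is exactly the content of the cited results, the text as written restates the theorem rather than establishing it. One concrete detail of the sketch is also wrong: the centralizer of a singular idempotent $f$ can never be totally isotropic for the form~\eqref{equation:scpr-of-basis}, because $f\in C_{\Okubo}(f)$ and $\n(f)=1$; so your proposed mechanism for excluding para-quadratic subalgebras fails and must be replaced by an analysis of the restriction of the norm to $C_{\Okubo}(f)$ and its radical, as in \cite[Section~6]{Eld18}. The Petersson route would indeed close the gap, but it requires classifying the order-$3$ automorphisms $\tau$ of the Zorn vector-matrix algebra up to conjugacy and determining which twists $\mathcal{C}_{\tau}$ are split (this is \cite[Theorem~6.3]{Eld18}, of which the paper quotes only item~(i))---again the nontrivial part, which the proposal does not attempt.
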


    The relationship between the quaternionic idempotent and all other idempotents (quadratic and singular) of the split Okubo algebra is described in~\cite[Proposition~8.7]{Eld18}.

    It is also known that if $\chrs \F = 3$, then the norm on any Okubo algebra over~$\F$ is isotropic~\cite[Theorem~5.1]{Eld97}. Besides, any Okubo algebra over a finite field is split~\cite[Theorem~3.2]{Eld23}.

    \medskip

    The following proposition utilizes the classification of Okubo algebras via central simple algebras of degree~$3$ which was obtained in~\cite{EM93}. It should be noted that these results were originally stated for $\chrs \F \notin \{ 2, 3 \}$, however, they still remain valid for $\chrs \F = 2$, see~\cite[pp.~286--287]{Eld97}.

    \begin{proposition}[{\rm \cite[p.~2486, p.~2501, Propositions~7.3 and~7.4]{EM93}, \cite[Corollary~3.6]{EP96}}] \label{proposition:omega-isotropic}
    Let $\chrs \F \neq 3$, and $\F$ contains a primitive cube root of unity~$\omega$. Consider an arbitrary Okubo algebra $\Okubo$ over~$\F$. Then 
    \begin{enumerate}[{\rm (1)}]
        \item the norm on~$\Okubo$ is isotropic;
        \item if $\Okubo$ contains a nonzero idempotent, then $\Okubo$ is isomorphic to the pseudo-octonion algebra $P_8(\F)$.
    \end{enumerate}
    \end{proposition}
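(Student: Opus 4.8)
The plan is to reduce the whole statement to the classical description of Okubo algebras, over a field $\F$ with $\chrs \F \neq 3$ and $\omega \in \F$, as trace-zero parts of central simple algebras of degree~$3$; this is exactly the content of the cited results of Elduque and Myung, which I would quote rather than reprove. Concretely, I would first record that every such $\Okubo$ is isomorphic to the space $A_0 = \{x \in A : \tr(x) = 0\}$ of reduced-trace-zero elements of a central simple $\F$-algebra $A$ of degree~$3$, equipped with the product $x*y = \mu xy + (1-\mu)yx - \tfrac{1}{3}\tr(xy)$ and the norm $\n(x) = -\tfrac{1}{3}\s(x)$, where now $\tr$, $\s$, $\det$ denote the coefficients of the reduced characteristic polynomial $\lambda^3 - \tr(x)\lambda^2 + \s(x)\lambda - \det(x)$ of $x \in A$. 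By Wedderburn's theorem $A$ is either split, $A \cong M_3(\F)$, in which case $A_0 = \frsl_3(\F) = P_8(\F)$ directly, or a central division algebra of degree~$3$. In this language both assertions become questions about the splitting of $A$.

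For part~(1) I would exhibit a single nonzero isotropic vector uniformly. Since $\chrs \F \neq 3$ and $\omega \in \F$, every central simple algebra of degree~$3$ is a cyclic symbol algebra, so $A \cong (a,b)_{\omega}$ with $a,b \in \F \setminus \{0\}$, generated by $u,v$ subject to $u^3 = a$, $v^3 = b$, $vu = \omega uv$. From $vu = \omega uv$ we get $vuv^{-1} = \omega u$, so the conjugate elements $u$ and $\omega u$ share a reduced characteristic polynomial, which forces its roots to form a full $\langle\omega\rangle$-orbit $\{r,\omega r,\omega^2 r\}$ with $r^3 = a \neq 0$; hence $\tr(u) = r(1+\omega+\omega^2) = 0$ and $\s(u) = r^2(\omega+\omega^2+\omega^3) = r^2(1+\omega+\omega^2) = 0$. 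Thus $u$ is a nonzero element of $A_0$ with $\n(u) = -\tfrac13\s(u) = 0$, and the norm is isotropic.

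For part~(2) I would translate the idempotent condition into the associative structure of $A$. If $e \in \Okubo = A_0$ satisfies $e*e = e$, then $\mu e^2 + (1-\mu)e^2 - \tfrac{1}{3}\tr(e^2) = e$, i.e.\ $e^2 = e + t$ with $t = \tfrac{1}{3}\tr(e^2) \in \F$. Were $e$ a scalar $c\cdot 1$, then $\tr(e) = 3c = 0$ would force $c = 0$ since $\chrs \F \neq 3$, contradicting $e \neq 0$; hence $e \notin \F\cdot 1$ and $\F[e] = \F\cdot 1 + \F\cdot e$ is a genuine $2$-dimensional commutative subalgebra of $A$, isomorphic to $\F[\lambda]/(\lambda^2 - \lambda - t)$. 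Now suppose $A$ were a division algebra. Every subfield of a central division algebra of degree~$3$ has degree dividing~$3$, so $\F[e]$ cannot be a quadratic field; and a division algebra contains neither a nontrivial idempotent nor a nonzero nilpotent, which excludes $\F[e] \cong \F \times \F$ and $\F[e] \cong \F[\varepsilon]$. As these are the only possibilities for the $2$-dimensional algebra $\F[e]$, the division case is impossible, so $A \cong M_3(\F)$ is split and an $\F$-algebra isomorphism $A \to M_3(\F)$ restricts to an isomorphism of Okubo algebras $A_0 \to \frsl_3(\F) = P_8(\F)$; cf.\ Proposition~\ref{proposition:split-pseudo-octonion}.

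The genuine obstacle, and the one step I would import wholesale, is the classification identifying $\Okubo$ with $A_0$: once the correspondence is in hand, part~(1) needs only the standard fact that degree-$3$ algebras are cyclic symbol algebras when $\omega \in \F$, and part~(2) is the short computation above combined with the classical description of subalgebras of a degree-$3$ division algebra. The point requiring care is the characteristic: the results of~\cite{EM93} are stated for $\chrs \F \notin \{2,3\}$ but remain valid for $\chrs \F = 2$ by the remarks in~\cite[pp.~286--287]{Eld97}, and one should check the characteristic-$2$ bookkeeping in the identity $e^2 = e + t$ (where $\tfrac13 = 1$) and in the implication $3c = 0 \Rightarrow c = 0$, which is precisely where the hypothesis $\chrs \F \neq 3$ enters.
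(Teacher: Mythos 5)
Your argument is correct, but it takes a genuinely different route from the paper's. The paper treats Proposition~\ref{proposition:omega-isotropic} as essentially imported: part~(1) is quoted directly from \cite{EM93} and \cite{EP96}, and part~(2) is then obtained (see the remark following the proposition) by combining (1) with Theorem~\ref{theorem:table-of-isotropic-Okubo-with-idempotent} (an isotropic Okubo algebra with a nonzero idempotent in $\chrs\F\neq 3$ is isomorphic to $\Okubo_{1,1}$) and Proposition~\ref{proposition:split-pseudo-octonion} ($\Okubo_{1,1}\cong P_8(\F)$ when $\omega\in\F$) --- so the paper leans on Elduque's structure theory of isotropic Okubo algebras with idempotents. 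You instead work entirely inside the degree-$3$ central simple algebra model $\Okubo\cong A_0$ from \cite{EM93} (correctly flagging, as the paper does via \cite{Eld97}, that the classification extends to $\chrs\F=2$): for (1) you make the cited isotropy statement self-contained by writing $A$ as a symbol algebra $(a,b)_\omega$ and exhibiting the explicit isotropic vector $u$, where the conjugation $vuv^{-1}=\omega u$ forces the reduced characteristic roots to be a full $\langle\omega\rangle$-orbit $\{r,\omega r,\omega^2 r\}$ of nonzero elements (size-$1$ orbits would give $\omega r=r$, i.e.\ $r=0$), whence $\tr(u)=\s(u)=0$ and $\n(u)=0$; for (2) you avoid Theorem~\ref{theorem:table-of-isotropic-Okubo-with-idempotent} altogether, observing that $e*e=e$ yields $e^2=e+t$ with $t=\tfrac13\tr(e^2)$, so $\F[e]$ is a two-dimensional unital commutative subalgebra, and none of its three possible isomorphism types (quadratic field, $\F\times\F$, $\F[\varepsilon]$) can sit inside a degree-$3$ division algebra --- forcing $A\cong M_3(\F)$ and $A_0\cong\frsl_3(\F)=P_8(\F)$, the product and norm being preserved since both are defined intrinsically via the associative product and reduced trace. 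What your approach buys is transparency and independence: the only wholesale import is the $\Okubo\cong A_0$ correspondence (exactly what the paper cites anyway), while the splitting mechanism behind (2) is made visible through elementary division-algebra facts; what the paper's route buys is uniformity, since Theorem~\ref{theorem:table-of-isotropic-Okubo-with-idempotent} covers $\chrs\F=3$ as well and is needed elsewhere in the text regardless. Your characteristic bookkeeping is also sound: $\lambda^2-\lambda-t$ is separable in characteristic $2$, so the $\F[\varepsilon]$ case simply does not arise there, and $3c=0\Rightarrow c=0$ is exactly where $\chrs\F\neq3$ enters.
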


    \begin{remark}
        Proposition~\ref{proposition:omega-isotropic}(2) follows immediately from Theorem~\ref{theorem:table-of-isotropic-Okubo-with-idempotent} and Propositions~\ref{proposition:split-pseudo-octonion} and~\ref{proposition:omega-isotropic}(1).
    \end{remark}

    \subsection{Zero divisors in Okubo algebras}

    According to the following well-known criterion, the existence of zero divisors in an arbitrary finite-dimensional composition algebra~$\A$ is equivalent to the norm on~$\A$ being isotropic.
	
	\begin{proposition}[{\cite[Lemma~2.1]{EM91}}] \label{proposition:norm-of-zero-divisor}
		Let $\A$ be a finite-dimensional composition algebra. A nonzero element $a \in \A$ is a zero divisor if and only if~$\n(a) = 0$.
	\end{proposition}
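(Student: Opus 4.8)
The plan is to derive everything from the polarized form of the composition identity $\n(a*b) = \n(a)\n(b)$ and then to exploit finite-dimensionality for the converse. First I would linearize the composition property in its second argument: replacing $b$ by $b + c$ in $\n(a*b) = \n(a)\n(b)$ and cancelling the terms $\n(a*b)$ and $\n(a*c)$ yields the bilinear identity
\[
\scpr{a*b}{a*c} = \n(a)\,\scpr{b}{c} \qquad \text{for all } a,b,c \in \A,
\]
and linearizing in the first argument gives the symmetric companion $\scpr{a*c}{b*c} = \n(c)\,\scpr{a}{b}$.

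For the forward implication I would argue by contraposition: assume $\n(a) \neq 0$ and show that $a$ is neither a left nor a right zero divisor. If $a*b = 0$, then the first identity gives $\n(a)\,\scpr{b}{c} = \scpr{a*b}{a*c} = 0$ for every $c \in \A$, so $\scpr{b}{c} = 0$ for all $c$; nondegeneracy of the associated bilinear form then forces $b = 0$. Applying the companion identity to the equation $b*a = 0$ rules out a nonzero right annihilator in the same way. Hence a nonzero zero divisor must have zero norm.

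For the converse, assume $a \neq 0$ and $\n(a) = 0$, and consider the left multiplication operator $L_a \colon x \mapsto a*x$. Substituting $\n(a) = 0$ into the first identity shows $\scpr{a*b}{a*c} = 0$ for all $b,c$, that is, the image $\Image L_a$ is a totally isotropic subspace of $\A$. Here I would invoke the standard fact that in a nondegenerate quadratic space a totally isotropic subspace $W$ satisfies $W \subseteq W^{\perp}$, whence $\dim W \leq \tfrac{1}{2}\dim \A$. Thus $L_a$ fails to be surjective, and by finite-dimensionality it fails to be injective as well, so $\Ker L_a \neq 0$; any nonzero $b$ with $a*b = 0$ exhibits $a$ as a (left) zero divisor.

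The polarization computations are routine. The step I expect to be the crux is the converse: the passage from the numerical condition $\n(a) = 0$ to the existence of an actual annihilator rests on the isotropy of $\Image L_a$ together with the rank--nullity theorem, and this is precisely where the finite-dimensionality hypothesis is indispensable.
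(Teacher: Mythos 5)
Your proof is correct. The paper does not prove this proposition but imports it from \cite[Lemma~2.1]{EM91}, and your argument --- polarizing the composition identity to get $\scpr{a*b}{a*c} = \n(a)\scpr{b}{c}$ and its companion, using nondegeneracy of the bilinear form for the forward direction, and for the converse observing that $\Image L_a$ is totally isotropic, hence of dimension at most $\tfrac{1}{2}\dim \A$, so that $L_a$ has nontrivial kernel by rank--nullity --- is precisely the standard argument behind the cited lemma, valid in all characteristics since the paper's definition of a composition algebra requires the polar form to be nondegenerate.
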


    In the study of orthogonality graphs and zero divisor graphs we are only interested in Okubo algebras with zero divisors, so from now onward $\Okubo$ denotes an Okubo algebra with isotropic norm over an arbitrary field~$\F$.

    We will need the following definition which introduces various subspaces that annihilate a given element~$a$ of an arbitrary algebra~$\A$.
	
	\begin{definition}
        \leavevmode
		\begin{itemize}
			\item 
			The {\em orthogonalizer} of $a$ is $O_\A(a)=\big\{ b \in \A \mid  ab=ba=0 \big\}$, i.e., the set of all elements in~$\A$ which are orthogonal to $a$.
			\item
			The {\em left annihilator} of $a$ is the set $l.\Ann_\A(a) = \big\{ b \in \A \mid ba=0 \big\}$.
			\item
			The {\em right annihilator} of $a$ is the set $r.\Ann_\A(a) = \big\{ b \in \A \mid ab=0 \big\}$.
		\end{itemize}
	\end{definition}

    \medskip

    We will use the explicit form of left and right annihilators, as well as their intersections, of zero divisors in~$\Okubo$ which was obtained in~\cite{Matzri}.

    \begin{notation}
    Let $x \in \Okubo$. The set of elements which are orthogonal to the element~$x$ with respect to the symmetric bilinear form $\n(\cdot,\cdot)$ is denoted by \[ x^\perp = \{ a \mid \n(x,a) = 0 \}. \]
    \end{notation}

    \begin{lemma}[{\cite[Theorem~3.1, Proposition~3.4]{Matzri}}] \label{lemma:annihilator-formula}
    Let $x \in Z(\Okubo)$. Then the left and right annihilators of the element~$x$ have dimension~$4$ and are given by the formulae
    \begin{align*}
        l.\Ann_{\Okubo}(x) &= x * \Okubo = \{ x * a \mid a \in \Okubo \},\\
        r.\Ann_{\Okubo}(x) &= \Okubo * x = \{ a * x \mid a \in \Okubo \}.
    \end{align*}
    \end{lemma}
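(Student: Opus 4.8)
The plan is to translate the statement into linear algebra of the left and right multiplication operators $L_x, R_x\colon \Okubo \to \Okubo$, $L_x(a) = x * a$ and $R_x(a) = a * x$, and to show that each has rank $4$. Since $x$ is a (nonzero) zero divisor, Proposition~\ref{proposition:norm-of-zero-divisor} gives $\n(x) = 0$, so Theorem~\ref{theorem:symmetric-flexible}(2) yields the operator identities $R_x L_x = L_x R_x = \n(x)\,\id = 0$. The first identity says $(x * a) * x = 0$ for every $a$, i.e.\ $x * \Okubo \subseteq l.\Ann_{\Okubo}(x)$, and the second gives $\Okubo * x \subseteq r.\Ann_{\Okubo}(x)$. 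Hence it is enough to prove that all four spaces are $4$-dimensional: the inclusions then become equalities between spaces of equal finite dimension, while $r.\Ann_{\Okubo}(x) = \Ker L_x$ and $l.\Ann_{\Okubo}(x) = \Ker R_x$ acquire dimension $4$ by rank--nullity.

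I would first establish the upper bound $\rnk L_x = \rnk R_x \le 4$. Combining symmetry of the form with the defining identity \eqref{equation:symmetric} gives $\scpr{x * a}{b} = \scpr{a}{b * x}$, that is, $R_x$ is the adjoint of $L_x$ with respect to $\n(\cdot,\cdot)$; since the form is nondegenerate, adjoint operators share the same rank, so $\rnk L_x = \rnk R_x =: r$. The relation $R_x L_x = 0$ forces $\Image L_x \subseteq \Ker R_x$, whence $r \le 8 - r$ and $r \le 4$. (Equivalently, linearizing $\n(a * b) = \n(a)\n(b)$ yields $\scpr{x * a}{x * b} = \n(x)\,\scpr{a}{b} = 0$, so $\Image L_x$ is totally isotropic and thus contained in its own orthogonal complement, again giving $\dim \Image L_x \le 4$.)

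The crux is the matching lower bound $r \ge 4$, and this is where I would use the linearized flexibility identity \eqref{equation:lin-of-symm} in full. As $x \ne 0$ and $\n(\cdot,\cdot)$ is nondegenerate, pick $y$ with $\scpr{x}{y} = 1$. Keeping the middle variable free and substituting the outer variables $x$ and $y$, \eqref{equation:lin-of-symm} becomes $(x * b) * y + (y * b) * x = \scpr{x}{y}\, b = b$ for all $b$, i.e.\ $R_y L_x + R_x L_y = \id$ as operators on $\Okubo$. Subadditivity of rank now gives $8 = \rnk \id \le \rnk(R_y L_x) + \rnk(R_x L_y) \le \rnk L_x + \rnk R_x = 2r$, so $r \ge 4$, and therefore $r = 4$.

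Putting this together, $\dim(x * \Okubo) = \rnk L_x = 4$ and $\dim(\Okubo * x) = \rnk R_x = 4$, so by rank--nullity $\dim \Ker L_x = \dim \Ker R_x = 4$; the inclusions of the first paragraph then collapse to the claimed equalities. I expect the lower bound to be the only real obstacle: the inclusions and the bound $r \le 4$ are formal consequences of flexibility and nondegeneracy, whereas the identity $R_y L_x + R_x L_y = \id$ (valid precisely because some $y$ is non-orthogonal to $x$) is the one piece of genuine leverage that upgrades $r \le 4$ to $r = 4$.
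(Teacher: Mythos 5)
Your proof is correct, but note that the paper itself contains no proof of this lemma to compare against: it is imported verbatim from Matzri--Vishne \cite[Theorem~3.1, Proposition~3.4]{Matzri}, which is why the comparison here is really with that external source rather than with anything in the text. Your argument is a clean, self-contained derivation using only tools the paper has already stated: Proposition~\ref{proposition:norm-of-zero-divisor} to get $\n(x)=0$, Theorem~\ref{theorem:symmetric-flexible}(2) for $R_xL_x=L_xR_x=\n(x)\,\id=0$ (hence the inclusions $x*\Okubo\subseteq l.\Ann_{\Okubo}(x)=\Ker R_x$ and $\Okubo*x\subseteq r.\Ann_{\Okubo}(x)=\Ker L_x$), the adjointness $\scpr{x*a}{b}=\scpr{a}{b*x}$ (which follows from Eq.~\eqref{equation:symmetric} together with symmetry of the form) to force $\rnk L_x=\rnk R_x=r$ and, via $\Image L_x\subseteq\Ker R_x$, the bound $r\le 4$. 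The genuinely nontrivial step, the lower bound, is handled exactly right: choosing $y$ with $\scpr{x}{y}=1$ (legitimate in every characteristic, since the paper's definition of a composition algebra demands nondegeneracy of the bilinear form even when $\chrs\F=2$) turns Eq.~\eqref{equation:lin-of-symm} into the operator identity $R_yL_x+R_xL_y=\id$, and rank subadditivity gives $8\le 2r$, so $r=4$; the equalities then follow by comparing dimensions inside the inclusions. What your route buys over a mere citation is uniformity: no multiplication table, no case split on $\chrs\F$ or on the existence of $\omega$, just the symmetric composition identities and nondegeneracy, so the argument works verbatim in any symmetric composition algebra with a nonzero isotropic element, which is essentially the level of generality at which Matzri and Vishne work. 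I see no gap; even the small pivot points (adjoint operators over a nondegenerate form have equal rank; the alternative upper bound via total isotropy of $\Image L_x$) are stated accurately.
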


    \begin{lemma}[{\cite[Proposition~3.8]{Matzri}}] \label{lemma:annihilator-intersection}
    Let $x, y \in Z(\Okubo)$.
    \begin{enumerate}[{\rm (1)}]
        \item If $x * y \neq 0$, then $(x * \Okubo) \cap (\Okubo * y) = \F (x * y)$ has dimension~$1$.
        \item If $x * y = 0$, then $(x * \Okubo) \cap (\Okubo * y) = x * y^{\perp} = x^{\perp} * y$ has dimension~$3$.
    \end{enumerate}
    \end{lemma}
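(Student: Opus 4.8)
The plan is to recast the statement in terms of annihilators and then run a single rank–nullity computation in each case. By Lemma~\ref{lemma:annihilator-formula} we have $x*\Okubo = l.\Ann_{\Okubo}(x)$ and $\Okubo*y = r.\Ann_{\Okubo}(y)$, both of dimension~$4$, so I would work with
\[
U := (x*\Okubo)\cap(\Okubo*y) = \{\, w\in\Okubo \mid w*x = 0 \text{ and } y*w = 0 \,\}.
\]
Three tools will carry the argument: the flexibility relation $(a*b)*a = a*(b*a) = \n(a)\,b$ of Theorem~\ref{theorem:symmetric-flexible}(2); its linearization~\eqref{equation:lin-of-symm}, which after relabelling reads
\[
y*(x*a) + a*(x*y) = \scpr{y}{a}\, x \qquad (\star)
\]
for all $a\in\Okubo$; and the cyclic invariance $\n(a*b,c) = \n(b*c,a) = \n(c*a,b)$, which I would deduce from Definition~\ref{definition:symmetric-composition} together with the symmetry of $\n(\cdot,\cdot)$. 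Note at once that $\F(x*y)\subseteq U$ in both parts, since flexibility gives $(x*y)*x = \n(x)\,y = 0$ and $y*(x*y) = \n(y)\,x = 0$.

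The common reduction is this. The linear map $F\colon x*\Okubo\to\Okubo$, $F(w)=y*w$, has kernel exactly $U$, so $\dim U = 4 - \dim\big(y*(x*\Okubo)\big)$; and since $w$ ranges over $x*\Okubo$ as $w=x*a$ with $a\in\Okubo$, identity $(\star)$ gives
\[
y*(x*\Okubo) = \{\, \scpr{y}{a}\,x - a*(x*y) \mid a\in\Okubo \,\}.
\]
Thus the whole lemma reduces to measuring this image in the two cases.

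For part~(2), where $x*y=0$, the image collapses to $\{\scpr{y}{a}\,x\} = \F x$ (nonzero because $\n(\cdot,\cdot)$ is nondegenerate and $x\neq0$), so $\dim U = 4-1 = 3$ immediately. To identify $U$ itself I would establish the two inclusions directly: for $a\in y^{\perp}$, $(\star)$ yields $y*(x*a)=0$, whence $x*y^{\perp}\subseteq U$; and conversely any $w\in U$ can be written $w=x*a$ (since $w*x=0$), and then $(\star)$ forces $\scpr{y}{a}\,x = y*w = 0$, i.e. $a\in y^{\perp}$. Hence $U = x*y^{\perp}$, and the symmetric argument via $a*y$ gives $U = x^{\perp}*y$; cyclic invariance moreover shows $\Okubo*x\subseteq y^{\perp}$, confirming $\dim(x*y^{\perp}) = 7-4 = 3$.

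For part~(1), set $p:=x*y\neq0$; then $\n(p)=\n(x)\n(y)=0$, so Lemma~\ref{lemma:annihilator-formula} applies to $p$ as well. Writing $G(a)=\scpr{y}{a}\,x - a*p$, so that $y*(x*\Okubo)=\Image G$, rank–nullity gives $\dim U = \dim\Ker G - 4$, and it suffices to prove $\dim\Ker G = 5$. The lower bound is free from $\F p\subseteq U$. The hard part — and the main obstacle — is the upper bound: forcing the intersection of two $4$-dimensional subspaces down to a single line rather than something larger. For this I would use cyclic invariance together with $y*p = \n(y)\,x = 0$ to obtain $p*\Okubo\subseteq y^{\perp}$; then on $y^{\perp}$ one has $G(a)=-a*p$, so $\Ker G\cap y^{\perp} = \{a\mid a*p=0\} = p*\Okubo$ is $4$-dimensional, and since $y^{\perp}$ has codimension~$1$ this gives $\dim\Ker G\le5$. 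Therefore $\dim U\le1$, and combined with $\F p\subseteq U$ I conclude $U = \F(x*y)$.
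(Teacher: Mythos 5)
Your proof is correct, and there is nothing in the paper to compare it against step by step: the paper does not prove this lemma at all, it imports it wholesale from Matzri--Vishne (\cite[Proposition~3.8]{Matzri}), just as it does with Lemma~\ref{lemma:annihilator-formula}. Your argument therefore supplies a self-contained derivation from exactly the ingredients the paper does make available, and I verified that each step goes through: the identification $U = \{w \mid w*x = 0,\ y*w = 0\}$ follows from Lemma~\ref{lemma:annihilator-formula}; your identity $(\star)$ is the correct relabelling of Eq.~\eqref{equation:lin-of-symm}; cyclic invariance of $\n(\cdot * \cdot, \cdot)$ follows from Eq.~\eqref{equation:symmetric} and symmetry of the form; $\n(x) = \n(y) = 0$ holds by Proposition~\ref{proposition:norm-of-zero-divisor}, which also legitimizes applying Lemma~\ref{lemma:annihilator-formula} to $p = x*y$ in part~(1), since $\n(p) = \n(x)\n(y) = 0$ and $p \neq 0$. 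In part~(2) the image computation $y*(x*\Okubo) = \F x$ and the two-sided inclusions giving $U = x*y^{\perp} = x^{\perp}*y$ are sound (the non-uniqueness of the representation $w = x*a$ is harmless, since you only need \emph{some} representative with $a \in y^{\perp}$, and your converse direction produces one). In part~(1) the crux --- cutting $\dim\Ker G$ down to~$5$ --- is handled cleanly: $y*p = \n(y)x = 0$ gives $p*\Okubo \subseteq y^{\perp}$ via cyclic invariance, so $\Ker G \cap y^{\perp} = l.\Ann_{\Okubo}(p) \cap y^{\perp} = p*\Okubo$ is exactly $4$-dimensional, and codimension~$1$ of $y^{\perp}$ yields $\dim\Ker G \leq 5$, hence $\dim U \leq 1$; combined with $\F p \subseteq U$ (which you correctly obtained from flexibility and $\n(x) = \n(y) = 0$) this pins down $U = \F(x*y)$. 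One could quibble that your proof uses the specific dimension~$8$ and the dimension~$4$ of the annihilators, so it is not more general than the original result in symmetric composition algebras, but as a verification of the statement within this paper's framework it is complete.
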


    Note that {\cite[Proposition~3.7]{Matzri}} also describes the intersection of two left (or two right) annihilators of linearly independent zero divisors in~$\Okubo$, but it will not be used in the current paper.

    Lemma~\ref{lemma:annihilator-intersection} immediately implies the following result on the directed zero divisor graph of the algebra~$\Okubo$. 

    \begin{theorem} \label{theorem:graph-of-zero-divisors-main}
		Let $\Okubo$ be an Okubo algebra with isotropic norm over a field~$\F$. Then the graph $\Gamma_Z(\Okubo)$ is connected, and $\diam\Gamma_Z(\Okubo) = 2$.
	\end{theorem}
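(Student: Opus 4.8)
\section*{Proof proposal}

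The plan is to show that between any ordered pair of distinct vertices there is a directed path of length at most~$2$, and that this bound is attained. Recall that by Proposition~\ref{proposition:norm-of-zero-divisor} the vertices of $\Gamma_Z(\Okubo)$ are exactly the lines $[x]$ with $\n(x) = 0$, and that a directed edge $[a] \to [b]$ means $a * b = 0$. The basic reduction is the following: a directed walk $[a] \to [c] \to [b]$ is available whenever there is a nonzero $c$ with $a * c = 0$ and $c * b = 0$, i.e.\ $c \in r.\Ann_{\Okubo}(a) \cap l.\Ann_{\Okubo}(b)$, which by Lemma~\ref{lemma:annihilator-formula} equals $(\Okubo * a) \cap (b * \Okubo)$. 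Crucially, every nonzero element of this space is itself a zero divisor: if $c = d * a$, then $\n(c) = \n(d)\n(a) = 0$ by the composition property (here $\n(a) = 0$ because $[a]$ is a vertex), so $c$ is a legitimate vertex. Thus the only point requiring care is that an intermediate vertex can be chosen distinct from the endpoints $[a]$ and $[b]$.

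First, if $a * b = 0$ then $[a] \to [b]$ is an edge and $\D([a],[b]) = 1$. So I would assume $a * b \neq 0$ and apply Lemma~\ref{lemma:annihilator-intersection} to the intersection $(b * \Okubo) \cap (\Okubo * a)$, whose two cases are governed by the product $b * a$. If $b * a \neq 0$, the intersection is the line $\F(b * a)$, and I would take $c = b * a$. The flexibility identity of Theorem~\ref{theorem:symmetric-flexible}(2) gives directly $a * (b * a) = \n(a)\, b = 0$ and $(b * a) * b = \n(b)\, a = 0$, so $[a] \to [c] \to [b]$ is a genuine walk. It remains to see that $[c]$ differs from both endpoints. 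If $b * a = \rho a$ with $\rho \neq 0$, then $(b * a) * b = \rho(a * b)$, and since this equals $\n(b) a = 0$ we get $a * b = 0$, contradicting $a * b \neq 0$; symmetrically, if $b * a = \sigma b$ with $\sigma \neq 0$, then $a * (b * a) = \sigma(a*b) = \n(a) b = 0$ again forces $a * b = 0$. Hence $[c] \neq [a], [b]$.

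If instead $b * a = 0$, then Lemma~\ref{lemma:annihilator-intersection}(2) tells us the intersection is $3$-dimensional, while $\F a + \F b$ has dimension at most~$2$. Therefore the intersection is not contained in $\F a + \F b$, and any $c$ in it outside $\F a + \F b$ is a vertex with $[c] \neq [a], [b]$ and $[a] \to [c] \to [b]$. This proves $\D([a],[b]) \leq 2$ in all cases, and since the same argument applies to the reversed pair, $\Gamma_Z(\Okubo)$ is connected. The bound is sharp: for example $z_{1,0} * z_{0,1} = -z_{1,1} \neq 0$ shows there is no edge $[z_{1,0}] \to [z_{0,1}]$ between these two distinct vertices, so their distance equals~$2$, giving $\diam \Gamma_Z(\Okubo) = 2$.

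The genuinely delicate point is the one-dimensional case $b * a \neq 0$: there the intersection supplies only the single candidate $[b * a]$, so one cannot rely on a dimension count to dodge the endpoints. This is exactly where the two flexibility identities $a*(b*a) = \n(a) b$ and $(b*a)*b = \n(b) b$ do the work, converting any coincidence $[b*a] \in \{[a],[b]\}$ into the excluded equality $a * b = 0$. Everything else is a direct application of Lemmas~\ref{lemma:annihilator-formula} and~\ref{lemma:annihilator-intersection}.
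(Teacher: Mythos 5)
Your proposal is correct and takes essentially the same route as the paper: both obtain the intermediate vertex from $r.\Ann_{\Okubo}(a) \cap l.\Ann_{\Okubo}(b) = (\Okubo * a) \cap (b * \Okubo)$ via Lemmas~\ref{lemma:annihilator-formula} and~\ref{lemma:annihilator-intersection}, and both get sharpness from a non-orthogonal pair of basis vectors in Table~\ref{table:okubo-algebra-isotropic}. The only difference is that you spell out details the paper's terser proof leaves implicit --- that the middle element is itself a vertex (by multiplicativity of the norm) and that in the one-dimensional case the candidate $[b*a]$ cannot coincide with an endpoint, which you rule out via the flexibility identities of Theorem~\ref{theorem:symmetric-flexible}(2) (and which, had it occurred, would anyway have forced $a*b=0$ and distance one).
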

	
	\begin{proof}
		Let $x, y \in Z(\Okubo)$. By Lemmas~\ref{lemma:annihilator-formula} and~\ref{lemma:annihilator-intersection}, the set $r.\Ann(x) \cap l.\Ann(y) = (\Okubo * x) \cap (y * \Okubo)$ contains a nonzero element~$z$, so $[x] \rightarrow [z] \rightarrow [y]$. Hence $\Gamma_Z(\Okubo)$ is connected, and $\diam\Gamma_Z(\Okubo) \leq 2$.
        
		On the other hand, if the diameter were equal to~$1$, then for any pair of zero divisors $x, y \in \Okubo$ the identity $x*y = 0$ would hold, which contradicts the multiplication table~\ref{table:okubo-algebra-isotropic}.
	\end{proof}

    Another consequence of Lemma~\ref{lemma:annihilator-intersection} is the explicit form of the orthogonalizer of an arbitrary zero divisor in~$\Okubo$.
    
    \begin{corollary} \label{corollary:orthogonalizer-formula}
    Let $x \in Z(\Okubo)$.
    \begin{enumerate}[{\rm (1)}]
        \item If $x * x \neq 0$, then $O_{\Okubo}(x) = \F (x * x)$ has dimension~$1$.
        \item If $x * x = 0$, then $O_{\Okubo}(x) = x * x^{\perp} = x^{\perp} * x$ has dimension~$3$.
    \end{enumerate}
    \end{corollary}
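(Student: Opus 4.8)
The plan is to recognize that this corollary is simply the diagonal case $y = x$ of Lemma~\ref{lemma:annihilator-intersection}, once the orthogonalizer has been rewritten as an intersection of one-sided annihilators. No genuinely new computation is needed; the work is entirely in matching definitions.

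First I would unwind the definition of the orthogonalizer. An element $b \in \Okubo$ lies in $O_{\Okubo}(x)$ precisely when $xb = 0$ and $bx = 0$, that is, when $b$ belongs to both the right annihilator and the left annihilator of $x$. Hence
\[
O_{\Okubo}(x) = l.\Ann_{\Okubo}(x) \cap r.\Ann_{\Okubo}(x),
\]
and by Lemma~\ref{lemma:annihilator-formula} the right-hand side equals $(x * \Okubo) \cap (\Okubo * x)$, since $l.\Ann_{\Okubo}(x) = x * \Okubo$ and $r.\Ann_{\Okubo}(x) = \Okubo * x$.

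Next I would invoke Lemma~\ref{lemma:annihilator-intersection} with $y := x$, which is permitted because $x \in Z(\Okubo)$. In case~(1), when $x * x \neq 0$, the lemma yields $(x * \Okubo) \cap (\Okubo * x) = \F(x*x)$, of dimension~$1$; in case~(2), when $x * x = 0$, it yields $x * x^{\perp} = x^{\perp} * x$, of dimension~$3$. These two cases give exactly assertions~(1) and~(2) of the corollary.

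The main point requiring attention is therefore not an obstacle but a piece of bookkeeping: verifying that the two-sided orthogonality condition $xb = bx = 0$ splits correctly into the pair of one-sided annihilator conditions, and that the left/right labeling in Lemma~\ref{lemma:annihilator-formula} is consistent with the sides on which $x$ acts in the definition of $O_{\Okubo}(x)$. Since the intersection in Lemma~\ref{lemma:annihilator-intersection} is written as $(x * \Okubo) \cap (\Okubo * y)$, setting $y = x$ reproduces $(x * \Okubo) \cap (\Okubo * x)$ verbatim, so the identification is immediate and the proof closes.
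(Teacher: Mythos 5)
Your proposal is correct and is exactly the paper's argument: the paper proves this corollary by substituting $y = x$ in Lemma~\ref{lemma:annihilator-intersection}, and your identification $O_{\Okubo}(x) = l.\Ann_{\Okubo}(x) \cap r.\Ann_{\Okubo}(x) = (x * \Okubo) \cap (\Okubo * x)$ via Lemma~\ref{lemma:annihilator-formula} is precisely the bookkeeping left implicit there. The left/right labeling you checked is consistent with the paper's definitions, so nothing is missing.
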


    \begin{proof}
    Can be obtained by substituting $y = x$ in Lemma~\ref{lemma:annihilator-intersection}.
    \end{proof}

    \section{Orthogonality graph of an Okubo algebra}

    In this section we describe the connected components of the orthogonality graph of an Okubo algebra~$\Okubo$ with isotropic norm over an arbitrary field~$\F$ and compute their diameters. Meanwhile, zero divisors are divided naturally into two types, depending on whether the value $\n(x, x*x)$ is nonzero.

    We first show that every zero divisor $x \in \Okubo$ which satisfies $\n(x, x*x) \neq 0$ is contained in a two-element connected component of the orthogonality graph $\Gamma_O(\Okubo)$.

    \begin{proposition} \label{proposition:connectivity-component-with-type3-elem}
        Let $x \in Z(\Okubo)$ satisfy $\n(x, x*x) \neq 0$.
        Then the subgraph of $\Gamma_O(\Okubo)$ on the vertex set
        \[
        V_x = \{ [x], [x*x] \}
        \]
        is a connected component of diameter~$1$.
    \end{proposition}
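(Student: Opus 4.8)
The plan is to first establish that $[x]$ and $[x*x]$ are indeed two distinct adjacent vertices, and then to show that neither of them has any neighbor outside $V_x$, so that $V_x$ is closed under adjacency and hence forms a connected component; since the two vertices are joined by an edge, its diameter is then exactly~$1$.

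First I would record the basic facts. Since $x$ is a zero divisor, Proposition~\ref{proposition:norm-of-zero-divisor} gives $\n(x) = 0$. Applying Theorem~\ref{theorem:symmetric-flexible}(2) with $y = x$ yields $(x*x)*x = x*(x*x) = \n(x)\,x = 0$, so $x$ and $x*x$ are orthogonal. The hypothesis $\scpr{x}{x*x} \neq 0$ forces $x*x \neq 0$ (otherwise the bilinear form would vanish), so $x*x$ is a genuine nonzero element; moreover, the relation just derived shows that each of $x$ and $x*x$ annihilates the other on both sides, so both are two-sided zero divisors and hence lie in $\mathbb{P}(Z_{LR}(\Okubo))$. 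To see that the two vertices are distinct, I would note that if $x*x = \lambda x$ for some scalar $\lambda$, then $\scpr{x}{x*x} = \lambda\,\scpr{x}{x} = 2\lambda\,\n(x) = 0$, contradicting the hypothesis. Thus $V_x$ consists of two distinct vertices joined by an edge, which already gives distance~$1$ between them.

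The heart of the argument is the isolation of $V_x$, and here the orthogonalizer formula of Corollary~\ref{corollary:orthogonalizer-formula} does almost all the work. Since $x*x \neq 0$, part~(1) of that corollary gives $O_{\Okubo}(x) = \F(x*x)$, so the only vertex adjacent to $[x]$ is $[x*x]$. For the other endpoint I would use Eq.~\eqref{equation:xxxx}: because $\n(x) = 0$, it simplifies to $(x*x)*(x*x) = \scpr{x}{x*x}\,x$, which is a nonzero multiple of $x$. In particular $(x*x)*(x*x) \neq 0$, so Corollary~\ref{corollary:orthogonalizer-formula}(1) applies to the element $x*x$ as well and yields $O_{\Okubo}(x*x) = \F\big((x*x)*(x*x)\big) = \F x$. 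Hence the only vertex adjacent to $[x*x]$ is $[x]$. Combining the two computations, every edge incident to a vertex of $V_x$ stays within $V_x$, so $V_x$ is a connected component, and its diameter is~$1$.

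I do not expect any serious obstacle here, since all the required machinery is already in place; the only point that needs care is the bookkeeping that $x*x$ is itself an element to which Corollary~\ref{corollary:orthogonalizer-formula} can be applied. The key observation making this work is that Eq.~\eqref{equation:xxxx} identifies $(x*x)*(x*x)$ as a nonzero scalar multiple of $x$ precisely when $\scpr{x}{x*x} \neq 0$, which is what allows the orthogonalizer of $x*x$ to collapse back onto $\F x$ and thereby seals off the component.
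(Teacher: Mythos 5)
Your proof is correct and takes essentially the same route as the paper's: both establish that $[x]$ and $[x*x]$ are distinct via the observation that $x*x=\gamma x$ would force $\n(x,x*x)=2\gamma\n(x)=0$, and both seal off the component by applying Corollary~\ref{corollary:orthogonalizer-formula}(1) first to $x$ and then, using Eq.~\eqref{equation:xxxx} with $\n(x)=0$ to see $(x*x)*(x*x)=\n(x,x*x)\,x\neq 0$, to $x*x$. Your extra steps (checking adjacency directly via Theorem~\ref{theorem:symmetric-flexible}(2) and that $x*x$ is a two-sided zero divisor) are harmless bookkeeping that the paper leaves implicit.
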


    \begin{proof}
        The condition $\n(x, x*x) \neq 0$ guarantees that $x*x \neq 0$. We show that the vertices $[x]$ and $[x*x]$ are distinct. Indeed, if $[x] = [x*x]$, then $x*x = \gamma x$ for some $\gamma \in \F$, and thus $\n(x,x*x) = \n(x,\gamma x) = 2 \gamma \n(x) = 0$, a contradiction.
        
        By Corollary~\ref{corollary:orthogonalizer-formula}(1), we have $O_{\Okubo}(x) = \F (x*x)$. 
        It follows from Eq.~\eqref{equation:xxxx} that
        \[
        (x*x)*(x*x) = \n(x, x*x) x - \n(x)x*x = \n(x, x*x) x \neq 0.
        \]
        Consequently, the element $x*x$ also satisfies the conditions of Corollary~\ref{corollary:orthogonalizer-formula}(1), so
        \[O_{\Okubo}(x*x) = \F (x*x)*(x*x) = \F x. \] 
        Then one can easily see that $V_x$ is a connected component of diameter~$1$.
    \end{proof}

    Next, we study the connected components of~$\Gamma_O(\Okubo)$ formed by all other zero divisors. First, note that if an Okubo algebra~$\Okubo$ contains a nonzero element $x$ which satisfies the condition $n(x, x*x) = 0$, then $\Okubo$ must contain a nonzero idempotent. If, moreover, the norm on~$\Okubo$ is isotropic, then, by Theorem~\ref{theorem:table-of-isotropic-Okubo-with-idempotent}, $\Okubo$ is isomorphic to $\Okubo_{1,\beta}$ for some $0 \neq \beta \in \F$.
	
	\begin{proposition}[{\cite[Proposition~3.3]{Eld97}, \cite[(34.10)]{KMRT98}}] \label{proposition:idempotent-existence}
		Let $(\mathcal{S}, *, \n)$ be a symmetric composition algebra over a field~$\F$. Assume that there exists a nonzero element $x \in \mathcal{S}$ such that $\n(x, x*x) = 0$. Then $\mathcal{S}$ contains a nonzero idempotent.
	\end{proposition}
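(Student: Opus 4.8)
The plan is to split on the value of $\n(x)$ and, in the degenerate case, to manufacture an idempotent from a hyperbolic pair. First, feeding the hypothesis $\n(x,x*x)=0$ into \eqref{equation:xxxx} collapses it to
\[
(x*x)*(x*x) = -\n(x)\,(x*x).
\]
If $\n(x)\neq 0$, I would first observe that $x*x\neq 0$ (otherwise $(x*x)*x=\n(x)\,x$ from Theorem~\ref{theorem:symmetric-flexible}(2) would force $\n(x)=0$), and then check directly that $e=-\n(x)^{-1}(x*x)$ satisfies $e*e=e$ and $e\neq 0$. This case is immediate.

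The substantive case is $\n(x)=0$. Here the displayed identity gives $(x*x)*(x*x)=0$, so setting $w=x$ when $x*x=0$ and $w=x*x$ otherwise produces a nonzero element with $w*w=0$; moreover $\n(w)=0$ follows from $(w*w)*w=\n(w)\,w$. Thus the problem reduces to the following: given a nonzero $w$ with $w*w=0$, build a nonzero idempotent. Using nondegeneracy of $\n$ I would pick $y$ with $\n(w,y)=1$ and correct it by a multiple of $w$ (replacing $y$ by $y-\n(y)\,w$) so that in addition $\n(y)=0$.

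Next I would study the two mixed products $p=w*y$ and $q=y*w$. The identities of Theorem~\ref{theorem:symmetric-flexible}(2) and their linearization \eqref{equation:lin-of-symm}, together with $w*w=0$, $\n(w)=\n(y)=0$, and $\n(w,y)=1$, yield a small closed multiplication table: $p*w=w*q=q*y=y*p=0$, $q*w=w*p=w$, and, writing $c=\n(y*y,w)$,
\[
p*p=-q+cw,\qquad q*q=-p+cw,\qquad p*q=0,\qquad q*p=cw,
\]
as well as $\n(p,w)=\n(q,w)=0$ and the crucial normalization $\n(p,q)=1$. From this table one computes $(p+q)*(p+q)=-(p+q)+3cw$ and $e*w=w*e=-w$ for $e=-(p+q)$, from which a short check shows that
\[
e'=-(w*y+y*w)+\n(y*y,w)\,w
\]
is idempotent. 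Finally $e'\neq 0$: were $e'=0$, pairing with $y$ would force $c=0$ and hence $q=-p$, whence $\n(p,q)=-2\n(p)=0$, contradicting $\n(p,q)=1$.

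The main obstacle is entirely in the isotropic case $\n(x)=0$: assembling the multiplication table of $p$ and $q$ correctly and, above all, pinning down the correction term $\n(y*y,w)\,w$ so that the verification $e'*e'=e'$ goes through uniformly across all characteristics. In particular, in characteristic~$3$ the naive candidate $-(w*y+y*w)$ is already idempotent (the $3cw$ term vanishes), but the general manipulation $3cw-2cw=cw$ is purely integral and so never divides by $3$, which is precisely why a single formula for $e'$ suffices. Isolating the relation $\n(p,q)=1$ is what makes the nonvanishing of $e'$ transparent.
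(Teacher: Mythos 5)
Your proposal is correct, but note that there is nothing in the paper to compare it against: the paper imports this proposition from \cite[Proposition~3.3]{Eld97} and \cite[(34.10)]{KMRT98} without reproducing a proof, so yours serves as a self-contained substitute built only from facts the paper does state, namely Theorem~\ref{theorem:symmetric-flexible}(2), its linearization \eqref{equation:lin-of-symm}, the associativity of the form \eqref{equation:symmetric}, and \eqref{equation:xxxx}. I verified the details. The anisotropic case is immediate as you say: $e=-\n(x)^{-1}(x*x)$ is idempotent since \eqref{equation:xxxx} collapses to $(x*x)*(x*x)=-\n(x)(x*x)$, and $x*x\neq 0$ because $(x*x)*x=\n(x)x$. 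In the isotropic case your whole table checks out: $p*w=(w*y)*w=\n(w)y=0$ and symmetrically $w*q=q*y=y*p=0$; $q*w=w$ and $w*p=w$ from \eqref{equation:lin-of-symm} with $w*w=0$; $p*q=0$ since $\n(w,y*w)=\n(y,w*w)=0$; $q*p=\n(w,y*y)\,w=cw$; and $p*p=-q+cw$, $q*q=-p+cw$ via $(w*y)*y=y-(y*y)*w$ and $((y*y)*w)*w=cw$. Hence $e*e=e+3cw$ and $e*w=w*e=-w$ for $e=-(p+q)$, so $e'=e+cw$ satisfies $e'*e'=e+3cw-2cw=e'$ integrally, which, as you observe, is exactly why one formula covers characteristic~$3$ (where $e$ itself is already idempotent) and characteristic~$2$ alike. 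The nonvanishing argument is also sound: $\n(p,y)=\n(q,y)=c$ gives $\n(e',y)=-c$, so $e'=0$ forces $c=0$, then $q=-p$ and $1=\n(p,q)=-2\n(p)=0$ since $\n(p)=\n(w)\n(y)=0$ by the composition law --- a contradiction. Two points you use tacitly and could flag: the existence of $y$ with $\n(w,y)=1$ rests on the strict nondegeneracy of $\n$ built into the paper's definition of a composition algebra, and the normalization $\n(p,q)=1$ comes from $\n(p,q)=\n(w,y*(y*w))=\n(w,y)-\n(w*w,y*y)=1$, again via \eqref{equation:lin-of-symm} and \eqref{equation:symmetric}. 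Your route essentially reconstructs the classical argument of Elduque and of Knus--Merkurjev--Rost--Tignol (case split on $\n(x)$, then a hyperbolic pair $(w,y)$ with $w*w=0$), so nothing is lost relative to the cited sources, and the uniform-characteristic bookkeeping is handled more explicitly than in either reference.
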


    Let $x \in Z(\Okubo)$ be such that $\n(x, x*x) = 0$. We have either $x*x = 0$, and then, by Corollary~\ref{corollary:orthogonalizer-formula}, the orthogonalizer of~$x$ has dimension~$3$, or $x*x \neq 0$, so $O_{\Okubo}(x) = \F (x * x)$, and $[x] \leftrightarrow [x * x]$ is the unique edge in $\Gamma_O(\Okubo)$ incident to~$[x]$. In the second case, Eq.~\eqref{equation:xxxx} implies that $(x*x)*(x*x) = 0$. 
    
    We begin by studying paths between elements of the first of these two types.

    \begin{lemma} \label{lemma:length-two-path}
        Let $x, y \in Z(\Okubo)$ be such that
        \begin{enumerate}[{\rm (1)}]
            \item $\D([x],[y]) > 1$, i.e., $x, y$ are linearly independent and are not orthogonal to each other;
            \item $x*x = y*y = 0$.
        \end{enumerate}
        Then $\D([x],[y]) = 2$ if and only if $\n(x,y) = 0$. In this case, there exists a unique path of length~$2$ between the vertices $[x]$ and $[y]$ which has the form $[x] \leftrightarrow [y*x] \leftrightarrow [y]$ (for $y*x \neq 0$) or $[x] \leftrightarrow [x*y] \leftrightarrow [y]$ (for $x*y \neq 0$).
    \end{lemma}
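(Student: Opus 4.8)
The plan is to combine the linearized flexibility identity~\eqref{equation:lin-of-symm} with the description of annihilator intersections in Lemma~\ref{lemma:annihilator-intersection}: the former handles the equivalence and the existence of the path, while the latter forces its uniqueness. Throughout I would use that $x, y \in Z(\Okubo)$ implies $\n(x) = \n(y) = 0$ by Proposition~\ref{proposition:norm-of-zero-divisor}, and that hypothesis~(1) means exactly that $x, y$ are linearly independent and at least one of $x*y$, $y*x$ is nonzero (otherwise $[x] \leftrightarrow [y]$). I would prove the two implications separately and then locate the intermediate vertex.

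For the forward implication, suppose $\D([x],[y]) = 2$, so there is a common neighbour $w \neq 0$ with $x*w = w*x = 0$ and $y*w = w*y = 0$. Substituting $a = x$, $b = w$, $c = y$ into the first form of~\eqref{equation:lin-of-symm} gives $(x*w)*y + (y*w)*x = \n(x,y)\,w$; the left-hand side vanishes, and $w \neq 0$ forces $\n(x,y) = 0$. This is the short, purely formal half.

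For the converse, assume $\n(x,y) = 0$; by the remark above I may take $y*x \neq 0$, the case $x*y \neq 0$ being symmetric under swapping $x$ and $y$. The key step is to verify that $y*x$ is orthogonal to both $x$ and $y$, i.e.\ four products vanish. Two are immediate from flexibility (Theorem~\ref{theorem:symmetric-flexible}(2)): $x*(y*x) = \n(x)\,y = 0$ and $(y*x)*y = \n(y)\,x = 0$. The other two follow from~\eqref{equation:lin-of-symm} together with $x*x = y*y = 0$ and $\n(x,y) = 0$: the substitution $a = y$, $b = x$, $c = x$ gives $(y*x)*x = \n(y,x)\,x - (x*x)*y = 0$, and the substitution $a = y$, $b = y$, $c = x$ into the second form gives $y*(y*x) = \n(y,x)\,y - x*(y*y) = 0$. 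Hence $y*x \in O_{\Okubo}(x) \cap O_{\Okubo}(y)$, so $[x] \leftrightarrow [y*x] \leftrightarrow [y]$ is a path of length~$2$ and $\D([x],[y]) \le 2$; with hypothesis~(1) this yields $\D([x],[y]) = 2$.

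Finally I would establish uniqueness by showing the common neighbourhood $O_{\Okubo}(x) \cap O_{\Okubo}(y)$ is one-dimensional, using Lemma~\ref{lemma:annihilator-intersection}(1) rather than identities. Any common neighbour $w$ in particular satisfies $x*w = 0$ and $w*y = 0$, so by Lemma~\ref{lemma:annihilator-formula} it lies in $r.\Ann_{\Okubo}(x) \cap l.\Ann_{\Okubo}(y) = (\Okubo * x) \cap (y * \Okubo)$; since $y*x \neq 0$, Lemma~\ref{lemma:annihilator-intersection}(1) identifies this set with $\F(y*x)$, so $O_{\Okubo}(x) \cap O_{\Okubo}(y) = \F(y*x)$ and the intermediate vertex must be $[y*x]$. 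This vertex differs from $[x]$ and $[y]$, since $[y*x] = [x]$ would put $x$ into $O_{\Okubo}(y)$, contradicting the non-orthogonality of $x$ and $y$, and likewise for $[y]$. I expect the main obstacle to be bookkeeping: correctly orienting the left and right annihilators when matching the intersection to the form of Lemma~\ref{lemma:annihilator-intersection}, and noticing that membership in that intersection encodes only two of the four defining equalities of a common neighbour, so the remaining two genuinely require the flexibility and linearization computations above.
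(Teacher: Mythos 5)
Your proof is correct and follows essentially the same route as the paper: the linearized identity~\eqref{equation:lin-of-symm} together with flexibility (Theorem~\ref{theorem:symmetric-flexible}(2)) handles the equivalence and the existence of the path, and Lemmas~\ref{lemma:annihilator-formula} and~\ref{lemma:annihilator-intersection}(1) pin any common neighbour to the line $\F(y*x)$, exactly as in the paper's uniqueness argument. The only differences are organizational: you obtain $\n(x,y)=0$ by applying~\eqref{equation:lin-of-symm} directly to an arbitrary common neighbour $w$ (giving $\n(x,y)\,w=0$), whereas the paper first identifies $w = y*x$ via the annihilator intersection and only then invokes the identity, and your explicit verification of the four vanishing products is precisely the ``direct verification'' option the paper mentions as an alternative to Corollary~\ref{corollary:orthogonalizer-formula}(2).
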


    \begin{proof}
    Since the graph $\Gamma_O(\Okubo)$ is undirected, and the elements $x, y$ are not orthogonal to each other, we may assume without loss of generality that $y * x \neq 0$.
    
    Note that if $\n(x,y) = 0$, then $[x] \leftrightarrow [y * x] \leftrightarrow [y]$. One can either verify this directly by using Theorem~\ref{theorem:symmetric-flexible}(2) and Eq.~\eqref{equation:lin-of-symm} or apply Corollary~\ref{corollary:orthogonalizer-formula}(2) which implies $y * x \in O_{\Okubo}(x) \cap O_{\Okubo}(y)$.

    Now assume that there exists $z \neq 0$ such that $[x] \leftrightarrow [z] \leftrightarrow [y]$. Then, by Lemma~\ref{lemma:annihilator-formula}, we have $z \in (y * \Okubo) \cap (\Okubo * x)$, and it follows from Lemma~\ref{lemma:annihilator-intersection}(1) that $z = y * x$ up to proportionality. Since $[z] \leftrightarrow [y]$, it holds that $y * (y * x) = y * z = 0$. By applying Eq.~\eqref{equation:lin-of-symm} we obtain that
    \[
    \n(x,y)y = x * (y * y) + y * (y * x) = x * 0 = 0,
    \]
    and thus $\n(x, y) = 0$. Note that the desired equality can be obtained directly from {\cite[Proposition~3.7]{Matzri}} by using the fact that $z \in (x * \Okubo) \cap (y * \Okubo)$.
    \end{proof}

    \begin{proposition} \label{proposition:middle-element}
    Under the conditions of Lemma~\ref{lemma:length-two-path} it holds that
    \[
    (x*y)*(x*y) = (y*x)*(y*x) = 0. 
    \]
    \end{proposition}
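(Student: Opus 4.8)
The plan is to obtain both vanishing statements from the linearized symmetric identity~\eqref{equation:lin-of-symm}, feeding in only the hypotheses $x*x = y*y = 0$ together with the orthogonality $\n(x,y) = 0$. I would prove $(x*y)*(x*y) = 0$ in detail and then deduce $(y*x)*(y*x) = 0$ by interchanging the roles of $x$ and $y$. Note that the products $x*y$ and $y*x$ are exactly the middle vertices of the length-two path furnished by Lemma~\ref{lemma:length-two-path}, so we are in the case $\D([x],[y]) = 2$ and hence $\n(x,y) = 0$; this orthogonality is used repeatedly below.

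First I would record the auxiliary identity $(x*y)*y = 0$. Substituting $z = y$ into~\eqref{equation:lin-of-symm} gives $(x*y)*y + (y*y)*x = \n(x,y)\,y$, and both $y*y$ and $\n(x,y)$ vanish, so $(x*y)*y = 0$.

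Next I would check that $\n(x, x*y) = 0$. Combining the symmetry of the bilinear form with the defining identity~\eqref{equation:symmetric} of a symmetric composition algebra, one computes
\[
\scpr{x}{x*y} = \scpr{x*y}{x} = \scpr{x}{y*x} = \scpr{y*x}{x} = \scpr{y}{x*x} = 0,
\]
the final equality holding because $x*x = 0$. With these two facts in hand, the result drops out of one more substitution, namely $z = x*y$ in~\eqref{equation:lin-of-symm}:
\[
(x*y)*(x*y) + \bigl((x*y)*y\bigr)*x = \n(x, x*y)\,y.
\]
The second summand on the left vanishes by the auxiliary identity and the right-hand side vanishes since $\n(x, x*y) = 0$, whence $(x*y)*(x*y) = 0$. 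Repeating the argument with $x$ and $y$ interchanged gives $(y*x)*(y*x) = 0$.

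I expect the only real difficulty to be organizational: choosing the substitutions into~\eqref{equation:lin-of-symm} so that each unwanted term is killed by precisely one of $x*x = 0$, $y*y = 0$, or $\n(x,y) = 0$. It is worth stressing that the hypothesis $\n(x,y) = 0$ is indispensable — without it the same manipulation would leave the residual term $-\n(x,y)\,(y*x)$, which is nonzero in general, so the conclusion genuinely relies on being in the distance-two regime of Lemma~\ref{lemma:length-two-path} and not merely on $x*x = y*y = 0$.
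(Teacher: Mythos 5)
Your proof is correct, but it takes a genuinely different route from the paper's. The paper argues by a dimension count: reducing to the case $y*x \neq 0$, it notes that $x, y \in O_{\Okubo}(y*x)$ are linearly independent, so $\dim O_{\Okubo}(y*x) \geq 2$, which by Corollary~\ref{corollary:orthogonalizer-formula}(1) is incompatible with $(y*x)*(y*x) \neq 0$ (the orthogonalizer would then be the line $\F\,(y*x)*(y*x)$). Your argument is instead the direct verification that the paper only gestures at (``we can either verify the desired equality directly or\dots''): two substitutions into Eq.~\eqref{equation:lin-of-symm} together with the associativity \eqref{equation:symmetric} of the bilinear form. What your route buys: it is self-contained, avoiding the Matzri--Vishne dimension results underlying Corollary~\ref{corollary:orthogonalizer-formula}; it needs no case split on whether $y*x$ vanishes; and it isolates exactly where $\n(x,y)=0$ enters. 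Your closing remark is accurate and, in fact, sharper than the paper's wording: from your manipulation one gets the identity $(x*y)*(x*y) = -\n(x,y)\,(y*x)$ whenever $x*x = y*y = 0$, so the conclusion genuinely fails if only hypotheses (1)--(2) of Lemma~\ref{lemma:length-two-path} hold with $\n(x,y) \neq 0$ (e.g.\ $x = z_{0,2}+z_{1,2}+z_{2,2}$ and $y = z_{0,1}+z_{1,1}+z_{2,1}$ in $\Okubo_{1,1}$ with $\chrs \F \neq 3$ give $x*x=y*y=0$, $x*y = -3z_{1,0}$, $\n(x,y)=3$, and $(x*y)*(x*y) = 9z_{2,0} \neq 0$). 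The paper's proof tacitly makes the same assumption: the claim $x, y \in O_{\Okubo}(y*x)$ holds only when $\n(x,y)=0$, since $(y*x)*x = \n(x,y)\,x$ and $y*(y*x) = \n(x,y)\,y$ by Eq.~\eqref{equation:lin-of-symm}. Your reading --- that the proposition lives in the distance-two regime where Lemma~\ref{lemma:length-two-path} supplies $\n(x,y)=0$ --- is the one consistent with every application in the paper (e.g.\ in Lemma~\ref{lemma:zero-square-subspace}, where the pair satisfies $w \in x^{\perp}$), so making that hypothesis explicit is an improvement in precision, not a deviation.
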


    \begin{proof}
    It is sufficient to prove this statement for $y*x$. If $y*x = 0$, then it is obvious. Now assume that $y*x \neq 0$. Then we can either verify the desired equality directly or use Corollary~\ref{corollary:orthogonalizer-formula}: since $x, y \in O_{\Okubo}(y*x)$ are linearly independent, we have $\dim O_{\Okubo}(y*x) \geq 2$.
    \end{proof}

    \begin{corollary} \label{corollary:distance-at-most-two}
    Let $x, y \in Z(\Okubo)$ be such that $x*x = y*y = 0$. Then $\D([x],[y]) \leq 2$ if and only if $\n(x,y) = 0$.
    \end{corollary}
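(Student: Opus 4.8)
The plan is to reduce everything to Lemma~\ref{lemma:length-two-path}, which already settles the case $\D([x],[y]) > 1$, and to dispose of the two degenerate cases $\D([x],[y]) \in \{0,1\}$ directly.

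For the forward implication, I would suppose $\D([x],[y]) \le 2$, so that $\D([x],[y]) \in \{0,1,2\}$, and argue in each subcase that $\n(x,y) = 0$. If $\D([x],[y]) = 2$, then hypothesis~(1) of Lemma~\ref{lemma:length-two-path} is satisfied, and the lemma immediately gives $\n(x,y) = 0$. If $[x] = [y]$, then $y = \gamma x$ for some $\gamma \in \F$, whence $\n(x,y) = 2\gamma\,\n(x) = 0$ because $x$ is a zero divisor and so $\n(x) = 0$ by Proposition~\ref{proposition:norm-of-zero-divisor}. Finally, if $[x] \leftrightarrow [y]$, then $x*y = y*x = 0$, and substituting $z = y$ into the linearized identity~\eqref{equation:lin-of-symm} together with $y*y = 0$ yields
\[
\n(x,y)\,y = x*(y*y) + y*(y*x) = 0,
\]
so that $\n(x,y) = 0$ since $y \neq 0$; this is exactly the computation used in the proof of Lemma~\ref{lemma:length-two-path}.

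For the converse, I would assume $\n(x,y) = 0$. If $\D([x],[y]) \le 1$ there is nothing to prove; otherwise hypothesis~(1) of Lemma~\ref{lemma:length-two-path} holds, and the lemma yields $\D([x],[y]) = 2 \le 2$.

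Since the corollary is little more than Lemma~\ref{lemma:length-two-path} with the restriction $\D([x],[y]) > 1$ dropped, I do not anticipate a genuine obstacle. The only point requiring attention is to confirm that $\n(x,y) = 0$ is forced in the two small-distance cases: the case $[x] = [y]$ rests on $\n(x) = 0$ for zero divisors, while the adjacency case $[x] \leftrightarrow [y]$ is handled by reusing verbatim the linearized-flexibility argument already appearing in the proof of that lemma.
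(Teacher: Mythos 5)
Your proposal is correct and takes essentially the same route as the paper: both reduce everything to Lemma~\ref{lemma:length-two-path} and then verify directly that $\n(x,y) = 0$ in the two cases $\D([x],[y]) \leq 1$, with the case $[x]=[y]$ handled identically via $\n(x,y) = 2\gamma\n(x) = 0$. The only (inessential) difference is the adjacency case, where the paper invokes Corollary~\ref{corollary:orthogonalizer-formula}(2) and Eq.~\eqref{equation:symmetric} to get $\n(x, O_{\Okubo}(x)) = \n(x*x, x^{\perp}) = 0$, while you rerun the linearized-flexibility computation from Eq.~\eqref{equation:lin-of-symm}; both are one-line arguments already present in the paper's toolkit.
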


    \begin{proof}
    In view of Lemma~\ref{lemma:length-two-path}, it is sufficient to show that if $\D([x],[y]) \leq 1$, then $\n(x,y) = 0$. If $\D([x],[y]) = 0$, i.e., $[x] = [y]$, then $y = \gamma x$ for some $\gamma \in \F$, so $\n(x,y) = \gamma\n(x,x) = 2\gamma \n(x) = 0$. If $\D([x],[y]) = 1$, then $y \in O_{\Okubo}(x)$ and, by Corollary~\ref{corollary:orthogonalizer-formula}(2) and Eq.~\eqref{equation:symmetric},
    \[
    \n(x, O_\Okubo(x)) = \n(x, x*x^\perp) = \n(x*x, x^\perp) = 0. \qedhere
    \]
    \end{proof}

    \begin{lemma} \label{lemma:zero-square-subspace}
    Assume that there exists a pair of linearly independent elements $a, b \in \Okubo$ with
    \[
    a*a=a*b=b*a=b*b=0.
    \]
    Let $x \in Z(\Okubo)$ be such that $x*x = 0$. Then there exists $y \in O_{\Okubo}(x)$, $y \notin \F x$, which satisfies $y*y = 0$. Moreover, for any $z \in \spn \{ x, y \} \subset O_{\Okubo}(x)$ we have $z*z = 0$.
    \end{lemma}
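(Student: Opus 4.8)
The plan is to reduce everything to finding a single element $y$, to extract the relevant internal structure of the orthogonalizer $O_{\Okubo}(x)$, and finally to produce $y$ by transporting the given ``null plane'' $\spn\{a,b\}$ onto $x$. First I dispose of the \emph{moreover} clause, which is automatic once $y \in O_{\Okubo}(x) \setminus \F x$ with $y*y=0$ has been found: for $z = \lambda x + \mu y$ we have $x*x = y*y = 0$ and, since $y \in O_{\Okubo}(x)$, also $x*y = y*x = 0$, so
\[
z*z = \lambda^2(x*x) + \lambda\mu(x*y + y*x) + \mu^2(y*y) = 0,
\]
while $z \in O_{\Okubo}(x)$ because the orthogonalizer is a linear subspace containing both $x$ (as $x*x=0$) and $y$. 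Thus the whole statement reduces to the existence of $y$.

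Next I would exploit the structure of $O := O_{\Okubo}(x)$. By Corollary~\ref{corollary:orthogonalizer-formula}(2) it is $3$-dimensional and equals $x*x^\perp = x^\perp*x$. Since $\n(x)=0$, composition gives $\n(x*u) = \n(x)\n(u) = 0$, so $\n$ vanishes on $O$, and linearizing the composition law yields $\scpr{x*u}{x*v} = \n(x)\scpr{u}{v} = 0$; hence $O$ is totally isotropic for the bilinear form. Using Eq.~\eqref{equation:lin-of-symm} (both groupings) together with $\scpr{u}{x}=0$ and $x*u=u*x=0$ for $u,v \in O$, one checks that $(u*v)*x = 0 = x*(u*v)$, so $O$ is closed under $*$, the line $\F x$ is a two-sided ideal annihilating $O$, and the squaring map $q(w)=w*w$ sends $O$ into itself and factors through the $2$-dimensional quotient $O/\F x$. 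Finally, for $w \in O$ both $w$ and $w*w$ lie in the totally isotropic space $O$, so $\scpr{w}{w*w}=0$, and Eq.~\eqref{equation:xxxx} collapses to
\[
(w*w)*(w*w) = \scpr{w}{w*w}\,w - \n(w)\,(w*w) = 0 .
\]
Therefore, for \emph{any} $w \in O \setminus \F x$ the element $w*w$ already squares to zero: if $w*w=0$ we set $y=w$, and if $w*w \notin \F x$ we set $y = w*w$. The only configuration left to exclude is the \emph{bad case} in which $q$ maps $O \setminus \F x$ into $\F x \setminus \{0\}$.

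The main work — and where I expect the real obstacle to lie — is ruling out this bad case, i.e.\ producing a nonzero zero of $q$ off the line $\F x$. Here I would invoke homogeneity. Since $x*x=0$ forces $\scpr{x}{x*x}=0$, Proposition~\ref{proposition:idempotent-existence} and Theorem~\ref{theorem:table-of-isotropic-Okubo-with-idempotent} place $\Okubo$ among the algebras $\Okubo_{1,\beta}$, for which the elements $u$ with $u*u=0$ form a single orbit of lines under the conjugation-type automorphisms of the explicit models. Choosing $\varphi \in \Aut(\Okubo)$ with $\varphi(a) = \lambda x$ and putting $y = \varphi(b)$ gives $y*y = \varphi(b*b) = 0$, then $x*y = \lambda^{-1}\varphi(a*b) = 0$ and $y*x = \lambda^{-1}\varphi(b*a) = 0$ so that $y \in O_{\Okubo}(x)$, and $y \notin \F x$ because $\varphi$ is injective and $b \notin \F a$. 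This single automorphism simultaneously settles the existence of $y$ and excludes the bad case, since the transported plane $\spn\{x,y\}$ is totally null.

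The crux is therefore the transitivity of $\Aut(\Okubo)$ on the lines $[u]$ with $u*u = 0$ (equivalently, the fact that the binary quadratic form measuring the $\F x$-component of $q$ is isotropic for every such $x$ once it is isotropic for $a$). I would establish it case by case along the lines indicated in the introduction: via $\GL_3$-conjugation on the rank-one nilpotents of $\frsl_3(\F)$ when $\chrs\F \neq 3$, and via the order-$3$ automorphism of the Zorn vector-matrix algebra when $\chrs\F = 3$, using the explicit multiplication Table to confirm that the existence of the null plane $\spn\{a,b\}$ indeed forces $a$ and $x$ into the same orbit. Should the automorphism route prove delicate in some characteristic, the fallback is to rule out the bad case by a direct computation of $q$ on $O_{\Okubo}(x)$ within $\Okubo_{1,\beta}$ using the Table.
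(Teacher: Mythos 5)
Your reduction is sound up to the final step: the \emph{moreover} clause, the structure theory of $O=O_{\Okubo}(x)$ (totally isotropic, closed under $*$, with $q(w)=w*w$ factoring through $O/\F x$ and $(w*w)*(w*w)=0$), and the identification of the single remaining \emph{bad case} are all correct, and this part is a nice alternative to the paper's setup. The genuine gap is the crux you rely on: transitivity of $\Aut(\Okubo)$ on the lines $[u]$ with $u*u=0$ is \emph{false} exactly in the characteristic-$3$ case, where this lemma carries the most weight. By Corollary~\ref{corollary:two-types-char-three} and Proposition~\ref{proposition:zero-square-class} (resting on Propositions~\ref{proposition:single-orbit} and~\ref{proposition:idempotents-relation}), square-zero elements of the split Okubo algebra in characteristic~$3$ fall into two automorphism-invariant types, distinguished by whether $x\perp C_{\Okubo}(e)$ for the unique quaternionic idempotent $e$ (which every automorphism fixes); they even have non-isomorphic orthogonalizer behaviour, so no automorphism can merge them, and within type~(2) a single orbit is only guaranteed over algebraically closed fields. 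Concretely, take $a,b$ a basis of the totally null plane of Proposition~\ref{proposition:zero-square-class} --- this pair satisfies your hypothesis --- and $x=z_{0,2}+z_{1,2}+z_{2,2}$, which is of type~(2): then no $\varphi\in\Aut(\Okubo)$ with $\varphi(a)\in\F x$ exists, and your construction of $y=\varphi(b)$ collapses, even though the lemma's conclusion is true for this $x$. A secondary logical defect: in characteristic $\neq 3$ you invoke the $\frsl_3(\F)$ model, which presupposes $\omega\in\F$; the lemma's hypothesis does not grant this, and dismissing the case $\omega\notin\F$ as vacuous is circular, since the vacuity is precisely what Corollary~\ref{corollary:field-condition} later derives \emph{from} this lemma. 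Your fallback (``direct computation of $q$ on $O_{\Okubo}(x)$'') is not an argument for an arbitrary $x$ without some homogeneity principle, which is what is at stake.

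The paper's proof avoids orbits altogether and is uniform in the characteristic: intersect $W=\spn\{a,b\}$ with the $7$-dimensional hyperplane $x^{\perp}$ to obtain $0\neq w\in W\cap x^{\perp}$ with $w*w=0$ and $\n(x,w)=0$; if $w\in\F x$ then $x\in W$ and one of $a,b$ serves as $y$; if $w\in O_{\Okubo}(x)$ set $y=w$; otherwise Lemma~\ref{lemma:length-two-path} and Proposition~\ref{proposition:middle-element} produce a nonzero $y\in\{x*w,\,w*x\}\subset O_{\Okubo}(x)$ with $y*y=0$. That is, the null plane is transported onto $x$ by linear algebra and the distance-two mechanism, not by an automorphism; replacing your automorphism step with this transport (and keeping your analysis of $O_{\Okubo}(x)$ if you wish) would repair the proof.
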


    \begin{proof}
    Consider a two-dimensional subspace $W = \spn \{ a, b \}$. Note that for any $w \in W$ we have $w*w = 0$. Since the subspace $x^{\perp}$ has dimension~$7$, there exists a nonzero element $w \in x^{\perp} \cap W$. Assume first that $w \notin \F x$. If $w \in O_{\Okubo}(x)$, then we set $y = w$. Otherwise, we apply Lemma~\ref{lemma:length-two-path} and Proposition~\ref{proposition:middle-element} to the pair of elements $x$ and $w$ and take a nonzero element $y \in \{ x*w, w*x \}$.

    Consider now the case when $w \in \F x$. Then $x \in W$ implies that $a, b \in O_{\Okubo}(x)$, and we choose $y \in \{ a, b \}$ so that $y \notin \F x$.

    Finally, it follows from $x*x = x*y = y*x = y*y = 0$ that for any $z \in \spn \{ x, y \} \subset O_{\Okubo}(x)$ we have $z*z = 0$.
    \end{proof}

    \begin{example} \label{example:length-two-path}
    We use Table~\ref{table:okubo-algebra-isotropic} with $\alpha = 1$ to define $x = z_{0, 2} + z_{1, 2} + z_{2, 2}$. One can verify that $x*x = 0$. Consider $a = z_{0,1} - z_{1,1}$ and $b = z_{0,1} - z_{2,1}$. Then $x = a*a = b*b$ and $\n(a) = \n(b) = 0$, hence $a, b \in O_{\Okubo}(x)$. By Corollary~\ref{corollary:orthogonalizer-formula}(2), we have $\dim O_{\Okubo}(x) = 3$, so $O_{\Okubo}(x) = \spn \{ x, a, b \}$.

    We now find out when there exists an element $y \in O_{\Okubo}(x)$ such that $y \notin \F x$ and $y * y = 0$. Clearly, it is sufficient to consider only $y \in \spn \{ a, b\}$, $y \neq 0$, i.e., $y = \gamma a + \delta b$ for some $\gamma, \delta \in \F$ which are not both zero. Then
    \[
    0 = y*y = ((\gamma + \delta) z_{0,1} - \gamma z_{1,1} - \delta z_{2,1})^2 = (\gamma^2 + \gamma \delta + \delta^2) (z_{0,2} + z_{1,2} + z_{2,2})
    \]
    is equivalent to $\gamma^2 + \gamma \delta + \delta^2 = 0$.
    
    If $\chrs \F = 3$, then $\gamma = \delta$, and for $\gamma = \delta = -1$ we obtain $y = z_{0,1} + z_{1,1} + z_{2,1}$ which is the unique solution up to proportionality.
    
    If $\chrs \F \neq 3$, then the existence of a nonzero solution is equivalent to the existence of a primitive cube root of unity $\omega \in \F$. The only two solutions up to proportionality are $y = \omega^2 z_{0,1} + \omega z_{2,1} + z_{1,1}$ (for $\gamma = -1$ and $\delta = -\omega$) and $y = \omega^2 z_{0,1} + \omega z_{1,1} + z_{2,1}$ (for $\gamma = -\omega$ and $\delta = -1$). Let us dwell on the first case. Define
    \[
    z = \omega^2 z_{0,2} + \omega z_{1,2} + z_{2,2} = (\omega z_{0,1} - z_{1,1})^2 = \omega (\omega z_{1,1} - z_{2,1})^2.
    \]
    Note that $z*z = 0$, $z \notin \spn \{ x, a, b \} = O_{\Okubo}(x)$ and
    \[y \in \spn \{ \omega z_{0,1} - z_{1,1}, \omega z_{1,1} - z_{2,1} \} \subseteq O_{\Okubo}(z).\]
    Therefore, $[x] \centernot\leftrightarrow [z]$ and $[x] \leftrightarrow [y] \leftrightarrow [z]$. Hence the pair of elements $x$ and $z$ satisfies the conditions of Lemma~\ref{lemma:length-two-path}.
    \end{example}

    \begin{corollary} \label{corollary:field-condition}
    Let $x \in \Okubo$ and $x*x = 0$. Then there exists an element $y \in O_{\Okubo}(x)$ such that $y \notin \F x$ and $y*y = 0$ if and only if one of the following conditions holds:
    \begin{enumerate}[{\rm (1)}]
        \item $\chrs \F = 3$;
        \item $\chrs \F \neq 3$, and $\F$ contains a primitive cube root of unity $\omega$.
    \end{enumerate}
    \end{corollary}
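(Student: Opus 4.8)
The plan is to decouple the existence of the neighbour $y$ from the particular choice of $x$, so that the whole question collapses onto the single explicit element already analysed in Example~\ref{example:length-two-path}. First I would assume $x \neq 0$ (the case $x = 0$ lies outside the intended scope, since the vertices of $\Gamma_O(\Okubo)$ are nonzero). From $x*x = 0$ we get $\n(x, x*x) = \n(x, 0) = 0$, so Proposition~\ref{proposition:idempotent-existence} supplies a nonzero idempotent in $\Okubo$, and Theorem~\ref{theorem:table-of-isotropic-Okubo-with-idempotent} together with the Remark following it lets us assume $\Okubo = \Okubo_{1,\beta}$ for some nonzero $\beta \in \F$, irrespective of $\chrs \F$.

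The key observation is that, for elements with zero square, the existence of the desired $y$ is an invariant of the algebra rather than of $x$. Suppose such a $y$ exists for some $x_1$ with $x_1 * x_1 = 0$. Since $y \in O_{\Okubo}(x_1)$ gives $x_1 * y = y * x_1 = 0$, and $x_1 * x_1 = y*y = 0$ with $y \notin \F x_1$, the pair $x_1, y$ consists of linearly independent elements all of whose products vanish --- precisely the hypothesis of Lemma~\ref{lemma:zero-square-subspace}. That lemma then yields, for any other $x_2$ with $x_2 * x_2 = 0$ (which is automatically a zero divisor, as it annihilates itself), an element $y_2 \in O_{\Okubo}(x_2)$ with $y_2 \notin \F x_2$ and $y_2 * y_2 = 0$. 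Hence the existence of $y$ for one such element forces it for all of them.

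It remains to settle the question for a single convenient element, and Example~\ref{example:length-two-path} does exactly this for $x_0 = z_{0,2} + z_{1,2} + z_{2,2}$, where $O_{\Okubo}(x_0) = \spn\{x_0, a, b\}$ with $a = z_{0,1} - z_{1,1}$ and $b = z_{0,1} - z_{2,1}$. Writing $y = \alpha x_0 + \gamma a + \delta b$ and using that $x_0$ is orthogonal (with respect to $*$) to itself and to $a, b$, the cross terms involving $x_0$ drop out and one gets $y * y = (\gamma^2 + \gamma\delta + \delta^2)\,x_0$, so a suitable $y \notin \F x_0$ exists if and only if $\gamma^2 + \gamma\delta + \delta^2 = 0$ has a nonzero solution. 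Any such solution has $\delta \neq 0$, and with $t = \gamma/\delta$ this becomes $t^2 + t + 1 = 0$: in characteristic $3$ it factors as $(t-1)^2$ and is always solvable, while for $\chrs \F \neq 3$ its roots are the primitive cube roots of unity, so it is solvable exactly when $\omega \in \F$. Combined with the previous paragraph, this gives the claimed equivalence in both directions.

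I expect the reduction of the second paragraph to be the only genuinely delicate point: the trick is to feed a single found pair $x_1, y$ back into Lemma~\ref{lemma:zero-square-subspace} so as to propagate the property to an arbitrary $x$, after which everything reduces to the elementary solvability of $t^2 + t + 1 = 0$ already carried out in Example~\ref{example:length-two-path}. The normal-form reduction to $\Okubo_{1,\beta}$ and the root analysis are otherwise routine.
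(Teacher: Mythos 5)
Your proof is correct and takes essentially the same route as the paper's: the paper likewise uses Lemma~\ref{lemma:zero-square-subspace} to show the property holds or fails for all square-zero elements simultaneously, and then settles the single element $x_0 = z_{0,2}+z_{1,2}+z_{2,2}$ via the solvability of $\gamma^2+\gamma\delta+\delta^2=0$ from Example~\ref{example:length-two-path}. Your explicit preliminary reduction to $\Okubo_{1,\beta}$ through Proposition~\ref{proposition:idempotent-existence} and Theorem~\ref{theorem:table-of-isotropic-Okubo-with-idempotent} appears in the paper's surrounding text rather than inside the proof itself, so this is only a difference of presentation, not of substance.
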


    \begin{proof}
    By Lemma~\ref{lemma:zero-square-subspace}, the desired condition is true or false for all $x \in \Okubo$ which satisfy $x*x = 0$ simultaneously. According to Example~\ref{example:length-two-path}, the fulfillment of this condition for $x = z_{0, 2} + z_{1, 2} + z_{2, 2}$ is equivalent to the fact that either $\chrs \F = 3$, or $\chrs \F \neq 3$ and $\omega \in \F$.
    \end{proof}

    \begin{lemma} \label{lemma:star-graph}
    Let $\chrs \F \neq 3$ and $\omega \notin \F$. For any $x \in Z(\Okubo)$ such that $x*x = 0$ the set
	\[ V'_{x} = \big\{ [y] = \F y \mid y \in O_{\Okubo}(x) \big\} \]
    forms a connected component in $\Gamma_O(\Okubo)$. This connected component is a star graph with $[x]$ being its internal node, and its diameter equals~$2$.
    \end{lemma}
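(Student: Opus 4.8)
The plan is to exploit the three-dimensional orthogonalizer of~$x$ together with the field hypothesis, which forces $x$ to be essentially the only square-zero element of $O_{\Okubo}(x)$. First I would record the structural facts. Since $x \neq 0$ and $x*x = 0$, the element~$x$ is a zero divisor with $\n(x) = 0$ by Proposition~\ref{proposition:norm-of-zero-divisor}, and by Corollary~\ref{corollary:orthogonalizer-formula}(2) its orthogonalizer $O_{\Okubo}(x) = x*x^\perp$ is three-dimensional. Every nonzero $y \in O_{\Okubo}(x)$ satisfies $x*y = y*x = 0$ with $x \neq 0$, so it is a two-sided zero divisor; hence $V'_x = \mathbb{P}(O_{\Okubo}(x))$ is genuinely a set of vertices of $\Gamma_O(\Okubo)$, and $[x]$ is adjacent to every other vertex of~$V'_x$. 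The decisive extra input is Corollary~\ref{corollary:field-condition}: under $\chrs \F \neq 3$ and $\omega \notin \F$, no element of $O_{\Okubo}(x)$ outside $\F x$ has zero square, so every $y \in O_{\Okubo}(x) \setminus \F x$ satisfies $y*y \neq 0$.

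Next I would pin down the neighbours of each leaf. Fix $y \in O_{\Okubo}(x) \setminus \F x$; since $y*y \neq 0$, Corollary~\ref{corollary:orthogonalizer-formula}(1) gives that $O_{\Okubo}(y) = \F(y*y)$ is one-dimensional. As orthogonality is symmetric, $x \in O_{\Okubo}(y)$, and because $x \neq 0$ this forces $O_{\Okubo}(y) = \F x$. Therefore the only vertex orthogonal to $[y]$ is $[x]$. In particular no two leaves are adjacent, so the subgraph on $V'_x$ is a star with internal node $[x]$, and each leaf has its unique neighbour $[x]$ lying inside $V'_x$.

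Finally I would assemble the connected-component and diameter claims. The neighbours of $[x]$ are exactly the lines of $O_{\Okubo}(x) \setminus \F x$, all contained in $V'_x$, while each leaf's only neighbour is $[x] \in V'_x$; hence no edge leaves~$V'_x$, and being connected it is a full connected component of $\Gamma_O(\Okubo)$. For the diameter, the three-dimensional space $O_{\Okubo}(x)$ contains at least two one-dimensional subspaces other than $\F x$, giving at least two leaves; any two of them are non-adjacent yet joined through $[x]$, so they lie at distance~$2$ and the diameter equals~$2$.

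I expect the crux to be the second paragraph: everything hinges on a leaf having a one-dimensional orthogonalizer, which uses $y*y \neq 0$ and hence the field hypothesis through Corollary~\ref{corollary:field-condition}. The stronger identification $O_{\Okubo}(y) = \F x$, rather than merely $\dim O_{\Okubo}(y) = 1$, is what simultaneously excludes edges among the leaves and edges escaping~$V'_x$, so it deserves the most care.
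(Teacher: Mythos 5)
Your proposal is correct and takes essentially the same route as the paper's proof: Corollary~\ref{corollary:field-condition} gives $y*y \neq 0$ for every $y \in O_{\Okubo}(x) \setminus \F x$, so Corollary~\ref{corollary:orthogonalizer-formula}(1) yields $O_{\Okubo}(y) = \F(y*y) = \F x$, which simultaneously produces the star structure, closes the component, and (with $\dim O_{\Okubo}(x) = 3$) gives diameter~$2$. The only difference is that you spell out a few details the paper leaves implicit (that each $y$ is a two-sided zero divisor, and that the star has at least two leaves), which is harmless.
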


    \begin{proof}
    By Corollary~\ref{corollary:field-condition}, for any $y \in O_{\Okubo}(x)$, $y \notin \F x$, we have $y*y \neq 0$. Then, by Corollary~\ref{corollary:orthogonalizer-formula}(1), $O_{\Okubo}(y) = \F(y*y) = \F x$. Hence the subgraph of $\Gamma_O(\Okubo)$ on the vertex set $V'_{x}$ is a connected component, and it is a star whose internal node is $[x]$. It follows from Corollary~\ref{corollary:orthogonalizer-formula}(2) that $\dim O_{\Okubo}(x) = 3$, so this star has more than one leaf. Therefore, its diameter equals~$2$.
    \end{proof}

    \begin{lemma} \label{lemma:diameter-five}
    Assume that either $\chrs \F = 3$, or $\chrs \F \neq 3$ and $\omega \in \F$. Then the set
    \[ V = \big\{ [x] = \F x \mid x \in Z(\Okubo), \: \n(x, x*x) = 0 \big\} \]
    forms a connected component in $\Gamma_O(\Okubo)$ whose diameter is at most~$5$.
    \end{lemma}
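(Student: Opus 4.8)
The plan is to split the vertex set $V$ according to the two possibilities allowed by $\scpr{x}{x*x}=0$: the \emph{zero-square} vertices with $x*x=0$, for which $O_{\Okubo}(x)$ is $3$-dimensional by Corollary~\ref{corollary:orthogonalizer-formula}(2), and the remaining vertices with $x*x\neq0$. For the latter, Corollary~\ref{corollary:orthogonalizer-formula}(1) gives $O_{\Okubo}(x)=\F(x*x)$, so $[x]$ has the single neighbour $[x*x]$; moreover Eq.~\eqref{equation:xxxx} together with $\n(x)=0=\scpr{x}{x*x}$ yields $(x*x)*(x*x)=0$, so $[x*x]$ is itself a zero-square vertex of $V$. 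Thus every non-zero-square vertex is a pendant attached to a zero-square one, and the whole problem reduces to understanding the subgraph on zero-square vertices and then adding at most one edge at each end.

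First I would bound distances between zero-square vertices by $3$. If $\n(x,y)=0$, this is immediate from Corollary~\ref{corollary:distance-at-most-two}. The interesting case is $\n(x,y)\neq0$. The key local fact, valid precisely under our hypothesis by Corollary~\ref{corollary:field-condition}, is that $O_{\Okubo}(x)$ contains a zero-square vertex $y_{1}\notin\F x$; since $x,y_{1}\in O_{\Okubo}(x)$ and the orthogonalizer is a subspace, the plane $P=\spn\{x,y_{1}\}\subseteq O_{\Okubo}(x)$ consists entirely of zero-square elements (cf.\ Lemma~\ref{lemma:zero-square-subspace}). Restricting the linear functional $a\mapsto\n(a,y)$ to the $2$-dimensional space $P$ produces a nonzero $a\in P$ with $\n(a,y)=0$; since $\n(x,y)\neq0$ forces $a\notin\F x$, the vertex $[a]$ is a genuine neighbour of $[x]$, as $a\in O_{\Okubo}(x)$. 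As $a$ and $y$ are both zero-square with $\n(a,y)=0$, Corollary~\ref{corollary:distance-at-most-two} gives $\D([a],[y])\le2$, whence $\D([x],[y])\le3$.

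Finally I would assemble the global picture. Combining the pendant structure with the diameter-$3$ bound on zero-square vertices gives $\D\le3$ between two zero-square vertices, $\D\le4$ when one endpoint is a pendant, and $\D\le5$ when both are; hence $\diam\le5$ and $V$ is connected. To see that $V$ is a full connected component, note that a neighbour $w$ of a zero-square $x$ satisfies $x\in O_{\Okubo}(w)$, and it cannot happen that $\scpr{w}{w*w}\neq0$: otherwise $O_{\Okubo}(w)=\F(w*w)$ by Corollary~\ref{corollary:orthogonalizer-formula}(1), so $x=\lambda\,(w*w)$ with $\lambda\neq0$, and then Eq.~\eqref{equation:xxxx} yields $x*x=\lambda^{2}\scpr{w}{w*w}\,w\neq0$, contradicting $x*x=0$; hence $w\in V$. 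Since each pendant meets only its zero-square partner $x*x\in V$, no edge leaves $V$, so $V$ is a connected component. The main obstacle is the case $\n(x,y)\neq0$ of the second paragraph: everything rests on producing, inside the $3$-dimensional orthogonalizer, a zero-square neighbour of $x$ that is additionally orthogonal to $y$ with respect to the bilinear form, which is exactly what the plane $P$ and the single linear condition $\n(a,y)=0$ provide.
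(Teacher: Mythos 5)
Your proof is correct and takes essentially the same route as the paper's: the same reduction of vertices with $x*x\neq0$ to pendants attached to zero-square vertices via Eq.~\eqref{equation:xxxx}, and the same key step of producing, via Lemma~\ref{lemma:zero-square-subspace} and Corollary~\ref{corollary:field-condition}, a two-dimensional zero-square plane in an orthogonalizer and cutting it with one linear condition to obtain a common-neighbour direction (you place the plane in $O_{\Okubo}(x)$ and intersect with $y^{\perp}$, while the paper symmetrically uses $W\subset O_{\Okubo}(y)$ and $x^{\perp}$). Your explicit check that no edge leaves $V$ is a welcome detail the paper leaves implicit in Proposition~\ref{proposition:connectivity-component-with-type3-elem}.
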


    \begin{proof}
    Consider an arbitrary element $x \in Z(\Okubo)$ such that $\n(x, x*x) = 0$. If $x*x \neq 0$, then, by Corollary~\ref{corollary:orthogonalizer-formula}(1), $O_{\Okubo}(x) = \F (x * x)$, so $[x] \leftrightarrow [x * x]$ is the unique edge incident to $[x]$. In this case Eq.~\eqref{equation:xxxx} implies that $(x*x)*(x*x) = 0$.

    Therefore, in order to prove the statement, it is sufficient to show that there exists a path of length at most~$3$ between any two nonzero elements $x$ and $y$ such that $x * x = y * y = 0$. By Corollary~\ref{corollary:distance-at-most-two}, $\D([x],[y]) \leq 2$ if and only if $\n(x,y) = 0$.
    
    Assume that $\n(x,y) \neq 0$. By Lemma~\ref{lemma:zero-square-subspace} and Corollary~\ref{corollary:field-condition}, there exists a two-dimensional subspace $W \subset O_{\Okubo}(y)$ such that $w*w = 0$ for all $w \in W$. Note that $y \in W$ and $\n(x,y) \neq 0$, so $\dim (W \cap x^{\perp}) = 1$. Choose a nonzero element $z \in \Okubo$ such that $W \cap x^{\perp} = [z]$. By Lemma~\ref{lemma:length-two-path}, we have $\D([x],[z]) = 2$, so $\D([x],[y]) = 3$.
    \end{proof}

    \begin{proposition} \label{proposition:m-paths-length-three}
    Let $x, y \in Z(\Okubo)$ be such that $x * x = y * y = 0$ and $\D([x],[y]) = 3$, that is, $\n(x,y) \neq 0$. If all the elements $w \in O_{\Okubo}(y)$ which satisfy $w*w = 0$ constitute the union of~$m$ distinct two-dimensional subspaces $W \ni y$, then there exist exactly~$m$ paths of length~$3$ between $[x]$ and $[y]$.
    \end{proposition}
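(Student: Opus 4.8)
The plan is to set up an explicit bijection between the length-$3$ paths joining $[x]$ and $[y]$ and the $m$ two-dimensional subspaces $W$. Write an arbitrary length-$3$ path as
\[
[x] \leftrightarrow [u] \leftrightarrow [v] \leftrightarrow [y],
\]
where the four vertices are pairwise distinct. First I would pin down the nature of the two interior vertices. The vertex $[u]$ is orthogonal to the two linearly independent vertices $[x]$ and $[v]$, so $\dim O_{\Okubo}(u) \geq 2$; by Corollary~\ref{corollary:orthogonalizer-formula} this forces $u * u = 0$, since otherwise the orthogonalizer would be one-dimensional. The same argument applied to $[v]$ (orthogonal to the linearly independent $[u]$ and $[y]$) gives $v * v = 0$. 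Thus both interior vertices are zero-square zero divisors, which is exactly what lets me relate them to the subspaces $W$.

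Next I would attach to each path the subspace $W_v := \spn\{y, v\}$. Since $v \in O_{\Okubo}(y)$ and $v * v = 0$, the element $v$ lies in the set of zero-square elements of $O_{\Okubo}(y)$, which by hypothesis is the union of the $m$ subspaces; as $v \notin \F y$, it lies in exactly one of them, and that subspace equals $\spn\{y,v\} = W_v$. The crucial second step is to show that $v$ is moreover $\n$-orthogonal to $x$, i.e. $v \in x^\perp$. For this I would observe that $[x] \leftrightarrow [u] \leftrightarrow [v]$ is a walk of length~$2$, so $\D([x],[v]) \le 2$; and $\D([x],[v]) \ge 2$, because $\D([x],[v]) \le 1$ would yield $\D([x],[y]) \le 2$ via $[v] \leftrightarrow [y]$, contradicting $\D([x],[y]) = 3$. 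Hence $\D([x],[v]) = 2$, and since $x*x = v*v = 0$, Lemma~\ref{lemma:length-two-path} yields $\n(x,v) = 0$. Because $\n(x,y) \ne 0$ we have $y \notin x^\perp$, so $W_v \cap x^\perp$ is exactly one-dimensional and equals $[v]$. This shows $[v]$ is recovered from $W_v$ as $W_v \cap x^\perp$; the interior vertex $[u]$ is then recovered as the unique middle vertex of the length-$2$ path between $[x]$ and $[v]$ guaranteed by Lemma~\ref{lemma:length-two-path}. Consequently the entire path is determined by $W_v$, which gives injectivity of the map $\text{path} \mapsto W_v$.

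For surjectivity I would reverse the construction, essentially repeating the argument in the proof of Lemma~\ref{lemma:diameter-five}. Given one of the $m$ subspaces $W \ni y$, the relation $\n(x,y) \ne 0$ forces $\dim(W \cap x^\perp) = 1$; choosing a nonzero $v$ with $W \cap x^\perp = [v]$, we have $v \in W \subseteq O_{\Okubo}(y)$, so $[v] \leftrightarrow [y]$, while $v \in x^\perp$, $v*v = 0 = x*x$, and $\D([x],[v]) = 2$ (here I would check $[x] \ne [v] \ne [y]$ using $\n(x,y) \ne 0$). Lemma~\ref{lemma:length-two-path} then supplies a unique interior vertex $[u]$, producing a genuine length-$3$ path with $W_v = W$. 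Distinct subspaces $W$ give distinct lines $[v] = W \cap x^\perp$ (two distinct subspaces through $y$ meet only in $\F y$, whereas $v \notin \F y$), hence distinct paths. Combining injectivity and surjectivity yields exactly $m$ paths.

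The main obstacle, I expect, is the second step — establishing that the $y$-side interior vertex $v$ is $\n$-orthogonal to $x$ — since the path only guarantees orthogonality $vu = uv = 0$ with respect to the multiplication, not with respect to the bilinear form, and the passage to $x^\perp$ must be forced through the distance computation $\D([x],[v]) = 2$ and Lemma~\ref{lemma:length-two-path}. Once this orthogonality is in hand, the dimension count $\dim(W \cap x^\perp) = 1$ does all the remaining work, making both the recovery of $[v]$ from $W$ and the uniqueness of $[u]$ essentially automatic.
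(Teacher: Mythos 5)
Your proof is correct and follows essentially the same route as the paper's: the bijection $W \mapsto [v] = W \cap x^{\perp}$ (so that $W = \spn\{y,v\}$, distinct subspaces giving distinct lines), with the unique middle vertex supplied by Lemma~\ref{lemma:length-two-path}, exactly as in the proof of Lemma~\ref{lemma:diameter-five}. You merely spell out details the paper leaves implicit, such as deducing $u*u = v*v = 0$ from the orthogonalizer dimension in Corollary~\ref{corollary:orthogonalizer-formula} and forcing $\n(x,v)=0$ through the distance computation $\D([x],[v])=2$.
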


    \begin{proof}
    Follows from the proof of Lemma~\ref{lemma:diameter-five}, since $W \cap x^{\perp} = [z]$ implies $W = \spn \{ y, z \}$. Therefore, distinct subspaces $W$ correspond to distinct lines $[z]$. Besides, by Lemma~\ref{lemma:length-two-path}, there exists a unique path of length~$2$ between $[x]$ and $[z]$.
    \end{proof}

    \begin{remark} \label{remark:max-length-of-path}
    Let $x, y \in Z(\Okubo)$ be such that
    \begin{enumerate}[{\rm (1)}]
        \item $n(x, x*x) = n(y, y*y) = 0$;
        \item $x * x \neq 0 \neq y * y$;
        \item $n(x*x, y*y) \neq 0$.
    \end{enumerate}
    Then it follows from the proof of Lemma~\ref{lemma:diameter-five} that $\D([x],[y]) = 5$.
    \end{remark}

    \begin{example} \label{example:distance-five}
    Let $\chrs \F \neq 3$ and $\omega \in \F$. We use Table~\ref{table:okubo-algebra-isotropic} with $\alpha = 1$ to define $x = z_{0,1} - z_{1, 1}$ and $y = z_{0, 2} - z_{2, 2}$. Then 
	\[
    x*x = z_{0, 2} + z_{1, 2} + z_{2, 2}, \quad y*y = \beta (z_{0, 1} + z_{2, 1} + z_{1, 1}).
    \]
    Applying Eqs.~\eqref{equation:norm-of-basis} and~\eqref{equation:scpr-of-basis}, we obtain:
    \begin{align*}
        &\n(x) = \n(z_{0,1}) + \n(z_{1, 1}) - \n(z_{0,1}, z_{1, 1}) = 0, \\
        &\n(y) = \n(z_{0,2}) + \n(z_{2, 2}) - \n(z_{0,2}, z_{2, 2})= 0, \\
        &\n(x, x*x) = \n(z_{0, 1}, z_{0, 2}) - \n(z_{1, 1}, z_{2, 2}) = \beta - \beta = 0, \\
        &\n(y, y*y) = \beta (\n(z_{0, 2}, z_{0, 1}) - \n(z_{2, 2}, z_{1, 1})) = \beta^2 - \beta^2 = 0, \\
        &\n(x*x, y*y) = \beta (\n(z_{0,2}, z_{0, 1}) + \n(z_{1,2}, z_{2, 1}) + \n(z_{2,2}, z_{1, 1}))= 3\beta^2 \neq 0.        
    \end{align*}
    Therefore, by Remark~\ref{remark:max-length-of-path}, we have $\D([x],[y]) = 5$, and $\diam V = 5$.
    \end{example}

    \begin{lemma} \label{lemma:orthogonality-characteristic-three}
    Let $\chrs \F = 3$. Then the diameter of the connected component of $\Gamma_O(\Okubo)$ on the vertex set
    \[ V = \big\{ [x] = \F x \mid x \in Z(\Okubo), \: \n(x, x*x) = 0 \big\} \]
    equals~$5$ if $\Okubo$ is the split Okubo algebra, and $3$ otherwise.
    \end{lemma}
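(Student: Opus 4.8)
The plan is to combine the upper bound already available from Lemma~\ref{lemma:diameter-five} with an analysis of the internal structure of $V$. Since $\diam V\le 5$ in both cases, it remains to pin down the exact value. First I would record the layered structure of $V$: its vertices split into \emph{central} ones $[u]$ with $u*u=0$, and \emph{pendant} ones $[x]$ with $x*x\ne 0$ and $\n(x,x*x)=0$; by Corollary~\ref{corollary:orthogonalizer-formula}(1) together with Eq.~\eqref{equation:xxxx} (which forces $(x*x)*(x*x)=0$) each pendant has $[x*x]$ as its unique neighbour, so $u:=x*x$ is central. By Corollary~\ref{corollary:distance-at-most-two} and the proof of Lemma~\ref{lemma:diameter-five}, distances between central vertices are governed by the bilinear form: $\D([u],[v])\le 2$ iff $\n(u,v)=0$, and $\D([u],[v])=3$ otherwise. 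Because any shortest path leaving a pendant must begin with its unique incident edge, this yields $\D(x,w)=1+\D([x*x],[w])$ for a pendant $x$ and central $w$, and $\D(x,y)=2+\D([x*x],[y*y])$ for pendants with distinct feet. Hence $\diam V=5$ exactly when two pendant feet $u=x*x$, $v=y*y$ satisfy $\n(u,v)\ne 0$, which is precisely Remark~\ref{remark:max-length-of-path}.

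For the split algebra $\Okubo_{1,1}$ I would establish $\diam V\ge 5$ by exhibiting an explicit pair $x,y$ meeting the three conditions of Remark~\ref{remark:max-length-of-path}. The subtlety is that the symmetric pair of Example~\ref{example:distance-five} fails in characteristic~$3$, since there $\n(x*x,y*y)=3\beta^2=0$; in fact within each sector $\spn\{z_{0,1},z_{1,1},z_{2,1}\}$ or $\spn\{z_{0,2},z_{1,2},z_{2,2}\}$ every pendant has the \emph{same} foot (proportional to $z_{0,2}+z_{1,2}+z_{2,2}$, respectively $z_{0,1}+z_{1,1}+z_{2,1}$), and these two feet turn out to be algebra-orthogonal. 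A distance-$5$ pair must therefore activate the directions $z_{1,0},z_{2,0}$, reflecting the presence of a quaternionic (equivalently, singular) idempotent in the split case; I would produce such $x,y$ by a direct search in Table~\ref{table:okubo-algebra-isotropic} with $\alpha=\beta=1$ and verify the three conditions via Eqs.~\eqref{equation:norm-of-basis}--\eqref{equation:scpr-of-basis}.

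For the remaining, non-split algebras $\Okubo_{1,\beta}$ (equivalently, those with $\beta$ not a cube in $\F$) I would prove $\diam V\le 3$ by showing both that the configuration of Remark~\ref{remark:max-length-of-path} never occurs and that no distance-$4$ pair exists. Concretely, the aim is to show that the pendant feet are pairwise algebra-orthogonal (so any two pendants lie at distance at most~$3$) and that every central vertex is bilinear-orthogonal to every pendant foot (so every pendant--central distance is at most~$3$); combined with the central--central bound~$3$ this gives $\diam V\le 3$. This requires determining the full ``core variety'' $\{w:w*w=0\}$ and, inside it, the pendant feet: one decomposes an arbitrary $w$ along the two isotropic sectors and the $z_{*,0}$-directions and solves $w*w=0$ componentwise. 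For the reverse inequality $\diam V\ge 3$ I would use the two pendants $x=z_{0,1}-z_{1,1}$ and $y=z_{0,2}-z_{1,2}$, whose feet are the two (distinct but adjacent) sector feet, so that $\D([x],[y])=2+\D([x*x],[y*y])=3$; this works for every $\beta$.

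The main obstacle I anticipate is the non-split upper bound: controlling \emph{all} pendant feet, not merely those in a single sector, requires a complete description of $w*w=0$ including the mixed products that land in the $z_{*,0}$-directions, and it is exactly here that the cube-class of $\beta$ --- equivalently, the type of the ambient idempotent (quadratic versus singular) --- must be brought to bear to rule out a pendant foot non-orthogonal to some central vertex. A cleaner route, which I would pursue in parallel, is to realise $\Okubo_{1,\beta}$ through a Petersson twist of the Zorn vector-matrix algebra equipped with an order-$3$ automorphism: in that model central vertices and pendant feet acquire a transparent matrix interpretation, and the orthogonality relations among the feet can be read off directly, isolating the precise role played by $\beta$ being or not being a cube.
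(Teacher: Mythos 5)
Your proposal reconstructs the paper's argument almost step for step. The pendant/foot skeleton --- each $[x]$ with $x*x\neq 0$, $\n(x,x*x)=0$ has $[x*x]$ as unique neighbour by Corollary~\ref{corollary:orthogonalizer-formula}(1) and Eq.~\eqref{equation:xxxx}; central--central distances controlled by Corollary~\ref{corollary:distance-at-most-two} and the proof of Lemma~\ref{lemma:diameter-five}; diameter~$5$ characterised by Remark~\ref{remark:max-length-of-path} --- is exactly the structure the paper relies on. Your three key observations are all correct: the pair of Example~\ref{example:distance-five} fails in characteristic~$3$ because $\n(x*x,y*y)=3\beta^2=0$; pendants of $V$ inside one sector share a single foot (this needs the restriction $\n(w,w*w)=0$, i.e.\ $\gamma_0+\gamma_1+\gamma_2=0$ in the sector coordinates --- e.g.\ $z_{0,1}$ alone has foot $z_{0,2}$, but it is not in $V$); and a distance-$5$ pair in the split case must leave the two sectors. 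The paper's explicit witness is $x=z_{0,1}-z_{1,1}$, $y=z_{1,0}-z_{2,1}$, with $\n(x*x,y*y)=\n(z_{0,2},z_{0,1})=1\neq 0$, so your ``direct search'' does succeed. Your non-split lower-bound pair $x=z_{0,1}-z_{1,1}$, $y=z_{0,2}-z_{1,2}$ also works: its feet are the two mutually orthogonal sector feet, giving $\D([x],[y])=3$ for every $\beta$ (the paper uses $y=z_{0,2}-z_{2,2}$, with the same feet).

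The one place where you stop short of a proof is the crux of the ``$3$ otherwise'' claim: showing that for $\beta$ not a cube there are no zero-square elements beyond $\spn\{x*x,\,y*y\}$, hence no pendant foot or central vertex violating the distance-$3$ bound. You correctly locate where the cube class of $\beta$ must enter, but you only offer two unexecuted programmes (a global componentwise solve of $w*w=0$, or a Petersson-model computation). The paper avoids the global solve by crawling the single component: every central vertex reachable from $[x*x]$ lies in $\mathbb{P}(\spn\{x,x*x,y*y\})$; the candidates are parametrized as $u_k=x*x+(k/\beta)\,y*y$; taking $v_k=kz_{0,1}-z_{0,2}\in u_k^\perp$ one gets $O_{\Okubo}(u_k)=\spn\{x*x,\,y*y,\,w_k\}$ with $w_k=v_k*u_k$, and a short chain of applications of Eq.~\eqref{equation:lin-of-symm} yields $w_k*w_k=\n(u_k,v_k*v_k)\,u_k=\beta(\beta+k^3)\,u_k\neq 0$ precisely because $\beta$ is not a cube; a final check that $(x*x)*w_k+w_k*(x*x)=0$ (and similarly for $y*y$) pins every zero-square element of $O_{\Okubo}(u_k)$ inside $\spn\{x*x,y*y\}$. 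This localized orthogonalizer computation is what your plan still owes, and it is substantially lighter than solving $w*w=0$ in eight variables. A caveat on your fallback route: the paper's Petersson/Zorn analysis (Lemma~\ref{lemma:contained-in-centralizer}) is carried out only for the split algebra, where the quaternionic idempotent provides an automorphism with $(\tau-\id)^2=0$ as in Theorem~\ref{theorem:tau-quaternionic}; a non-split $\Okubo_{1,\beta}$ has no quaternionic idempotent, so its order-$3$ automorphism falls under the other cases of Elduque's classification, and the orthogonality relations would not simply ``read off'' from the canonical basis as you suggest.
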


    \begin{proof}
    By Lemma~\ref{lemma:diameter-five}, $\diam V \leq 5$.
    Assume first that $\Okubo$ is the split Okubo algebra, so its multiplication table is Table~\ref{table:okubo-algebra-isotropic} with $\alpha = \beta = 1$. We define $x = z_{0,1} - z_{1, 1}$ and $y = z_{1,0} - z_{2, 1}$. Then 
	\[
    x*x = z_{0, 2} + z_{1, 2} + z_{2, 2}, \quad y*y = z_{2, 0} + z_{0,1} + z_{1, 2}.
    \]
    As we have computed earlier, $\n(x) = \n(x,x*x) = 0$. Moreover, applying Eqs.~\eqref{equation:norm-of-basis} and~\eqref{equation:scpr-of-basis}, we obtain:
    \begin{align*}
        &\n(y) = \n(z_{1,0}) + \n(z_{2, 1}) - \n(z_{1,0}, z_{2, 1}) = 0, \\
        &\n(y, y*y) = \n(z_{1, 0}, z_{2, 0}) - \n(z_{2, 1}, z_{1, 2}) = 0, \\
        &\n(x*x, y*y) = \n(z_{0,2}, z_{0, 1})= 1 \neq 0.        
    \end{align*}
    Therefore, by Remark~\ref{remark:max-length-of-path}, we have $\D([x],[y]) = 5$, and $\diam V = 5$.

    Assume now that $\Okubo$ is not split. In particular, this means that $\Okubo$ is isomorphic to the algebra $\Okubo_{1,\beta}$, and $\beta \neq k^3$ for any $k \in \F$. Indeed, otherwise we could consider an isomorphism $\Okubo_{1,\beta} \to \Okubo_{1,1}$ such that $z_{i,j} \mapsto k^j z_{i,j}$ where $0 \leq i, j \leq 2$ and $(i, j) \neq (0,0)$. 
    
    Consider the elements $x$ and $y$ defined in Example~\ref{example:distance-five}. Then $x*x$ and $y*y$ are linearly independent and orthogonal to each other (see also Example~\ref{example:length-two-path}), and thus $\D([x],[y]) = 3$. Consequently, $\diam V \geq 3$. Since $V$ is a connected component in $\Gamma_O(\Okubo)$ and $[x] \in V$, the elements of $V$ are exactly those vertices which are connected to $[x]$ by a path. By Corollary~\ref{corollary:orthogonalizer-formula}(2), we have $\dim O_{\Okubo}(x*x) = \dim O_{\Okubo}(y*y) = 3$, so $O_{\Okubo}(x*x) = \spn \{ x, x*x, y*y \}$ and $O_{\Okubo}(y*y) = \spn \{ y, x*x, y*y \}$. For any $0 \neq k \in \F$ we set 
    \[
    u_k = x*x + k/\beta \cdot y*y = (z_{0, 2} + z_{1, 2} + z_{2, 2}) + k (z_{0, 1} + z_{2, 1} + z_{1, 1}).
    \]
    Note that $v_k = k z_{0,1} - z_{0,2} \in u_k^{\perp}$, so, by Corollary~\ref{corollary:orthogonalizer-formula}(2), 
    \[
    w_k = v_k * u_k = \beta k z_{1,0} - \beta k z_{2,0} - \beta z_{0,1} + k^2 z_{0,2} + \beta z_{1,1} - k^2 z_{2,2} \in O_{\Okubo}(u_k).
    \]
    One can easily infer that $O_{\Okubo}(u_k) = \spn \{ x*x, y*y, w_k \}$. We now find out when $w_k * w_k = 0$ holds. By using the fact that the norm on~$\Okubo$ is symmetric, Eq.~\eqref{equation:lin-of-symm} and the equality $u_k * u_k = 0$, we obtain
    \begin{align*}
    w_k * w_k &= (v_k * u_k) * (v_k * u_k) = \n(v_k * u_k, u_k) v_k - u_k * (v_k * (v_k * u_k)) \\
    &= \n(v_k, u_k * u_k) v_k - u_k * (\n(v_k, u_k) v_k - u_k * (v_k * v_k)) = u_k * ( u_k * (v_k * v_k)) \\
    &= \n(u_k, v_k * v_k) u_k - (v_k * v_k) * (u_k * u_k) = \n(u_k, v_k * v_k) u_k.
    \end{align*}
    Since $v_k*v_k = \beta z_{0,1} + k^2 z_{0,2}$, it holds that 
    \[
    \n(u_k, v_k * v_k) = \beta \n(z_{0,2}, z_{0,1}) + k^3 \n(z_{0,1}, z_{0,2}) = \beta (\beta + k^3).
    \]
    Hence the condition $w_k * w_k = 0$ is equivalent to $\beta = (-k)^3$. Therefore, according to our assumption on the parameter $\beta$, we have $w_k * w_k \neq 0$. 
    
    We set $v'_k = z_{2,2} - k z_{1,1} \in u_k^{\perp}$. One can verify that $w_k = u_k * v'_k$. Then Eq.~\eqref{equation:lin-of-symm} and the fact that $u_k \in O_{\Okubo}(x*x)$ imply
    \begin{align*}
        (x*x)*w_k + w_k*(x*x) &= (x*x)*(u_k*v'_k) + (v_k*u_k)*(x*x) \\
        &= \n(x*x,v'_k) u_k - v'_k*(u_k*(x*x)) \\
        &+ \n(x*x,v_k) u_k - ((x*x)*u_k)*v_k \\
        &= (\n(z_{2,2},-k z_{1,1}) + \n(z_{0,2},k z_{0,1})) u_k = 0.
    \end{align*}
    Similarly, $(y*y)*w_k + w_k*(y*y) = 0$. It then follows from $z \in O_{\Okubo}(u_k) = \spn \{ x*x, y*y, w_k \}$ and $z*z = 0$ that $z \in \spn \{ x*x, y*y \}$.

    By Corollary~\ref{corollary:orthogonalizer-formula}(1), any path which starts at the vertex $[x]$ must pass through the vertex $[x*x]$. Next, $[x*x]$ is adjacent to vertices $[z] \in \mathbb{P}(\spn \{x, x*x, y*y))$. If the coefficient at $x$ in $z$ is nonzero, then it follows from Example~\ref{example:length-two-path} that $z*z \neq 0$, and the vertex $[z]$ is adjacent only to $[x*x]$. Otherwise, $z \in \spn\{ x*x, y*y \}$, so either $[z] = [y*y]$, or $[z] = [u_k]$ for some $0 \neq k \in \F$. In both cases $[z]$ is connected by an edge either to vertices $[x*x]$, $[y*y]$, $[u_k]$ again, or to vertices of the form $[q]$, where $q \in O_{\Okubo}(z)$ and $q*q \neq 0$, and thus $[q]$ is adjacent only to $[z]$. Note that the proof of the previous statement for $[z] = [y*y]$ is similar to Example~\ref{example:length-two-path}. Therefore, $\diam V = 3$.
    \end{proof}

    \begin{theorem} \label{theorem:graph-of-orthogonality}
    Let $\Okubo$ be an Okubo algebra with isotropic norm over an arbitrary field~$\F$. Then the orthogonality graph $\Gamma_O(\Okubo)$ is disconnected, and the vertex sets of its connected components are as follows:
		\begin{enumerate} [\rm (1)]
			\item The set
			\[ V_{x} = \big\{ [x], \, [x*x] \big\} \]
            for any $x \in Z(\Okubo)$ such that $\n(x, x*x) \neq 0$;
                \item If $\Okubo$ contains a nonzero idempotent, $\chrs \F \neq 3$ and $\omega \notin \F$, then we consider the set
			\[ V'_{x} = \big\{ [y] \mid y \in O_{\Okubo}(x) \big\} \]
            for any $x \in Z(\Okubo)$ such that $x*x = 0$;
                \item If $\Okubo$ contains a nonzero idempotent but $\chrs \F = 3$ or $\omega \in \F$, then we consider the set
			\[ V = \big\{ [x] \mid x \in Z(\Okubo), \: \n(x, x*x) = 0 \big\}. \]
		\end{enumerate}
		The diameter of the connected component on the vertex set $V_x$ equals $1$, on the vertex set $V'_x$ --- equals~$2$, and on the vertex set $V$ --- equals $5$ if $\Okubo$ is the split Okubo algebra, and $3$ otherwise.
        
        Note that, by Theorem~\ref{theorem:table-of-isotropic-Okubo-with-idempotent}(1), if $\Okubo$ contains a nonzero idempotent and $\chrs \F \neq 3$, then $\Okubo$ is automatically split.
    \end{theorem}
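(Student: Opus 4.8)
The plan is to assemble the statement from the component-by-component analysis already carried out, organizing the vertices by the dichotomy on the value $\n(x, x*x)$. By Proposition~\ref{proposition:norm-of-zero-divisor}, every vertex of $\Gamma_O(\Okubo)$ is a line $[x]$ with $\n(x) = 0$, so first I would split these vertices into two classes according to whether $\n(x, x*x) \neq 0$ or $\n(x, x*x) = 0$. The first class is handled immediately by Proposition~\ref{proposition:connectivity-component-with-type3-elem}: each such $x$ lies in the two-element component $V_x = \{[x], [x*x]\}$ of diameter~$1$, and I would observe that the pairing $[x] \mapsto [x*x]$ is an involution on this class, so that distinct pairs give disjoint components with no double-counting.

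For the second class I would invoke Proposition~\ref{proposition:idempotent-existence} to conclude that the mere existence of a vertex with $\n(x, x*x) = 0$ forces $\Okubo$ to contain a nonzero idempotent, whence $\Okubo \cong \Okubo_{1,\beta}$ by Theorem~\ref{theorem:table-of-isotropic-Okubo-with-idempotent}, and then branch on the arithmetic of~$\F$. When $\chrs \F \neq 3$ and $\omega \notin \F$, Lemma~\ref{lemma:star-graph} shows that each line $[x]$ with $x*x = 0$ is the center of a star component $V'_x$ of diameter~$2$; here I must check that these stars exhaust the class, i.e.\ that every vertex with $\n(x, x*x) = 0$ but $x*x \neq 0$ occurs as a leaf. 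This follows because for such $x$ one has $x*(x*x) = (x*x)*x = \n(x)\,x = 0$, so $x \in O_{\Okubo}(x*x)$, together with $(x*x)*(x*x) = 0$ from Eq.~\eqref{equation:xxxx}, so that $[x] \in V'_{x*x}$. In the complementary case ($\chrs \F = 3$ or $\omega \in \F$), Lemma~\ref{lemma:diameter-five} collects the whole class into a single component $V$, and its diameter is read off from Example~\ref{example:distance-five} and Lemma~\ref{lemma:orthogonality-characteristic-three}: it equals~$5$ in the split case and~$3$ otherwise, the latter occurring only when $\chrs \F = 3$, as recorded in the closing remark of the statement.

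Finally I would verify that the listed families are pairwise disjoint and exhaust all vertices — automatic, since they are separated by the value of $\n(x, x*x)$ and each member is a full connected component by the cited results — and establish disconnectedness by exhibiting two distinct components, for instance $\{[z_{1,0}], [z_{2,0}]\}$ and $\{[z_{0,1}], [z_{0,2}]\}$, which are of type~(1) because $\n(z_{1,0}, z_{1,0}*z_{1,0}) = \n(z_{1,0}, z_{2,0}) = \alpha$ and $\n(z_{0,1}, z_{0,1}*z_{0,1}) = \n(z_{0,1}, z_{0,2}) = \beta$ are both nonzero by Eq.~\eqref{equation:scpr-of-basis}. I expect the main obstacle to be the bookkeeping in the star case: ensuring that the leaf vertices (those with $x*x \neq 0$ but $\n(x, x*x) = 0$) are neither omitted from nor duplicated among the stars $V'_x$, and that no scalar multiple of a center is treated as a separate center. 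Both points rest on Corollary~\ref{corollary:field-condition}, which guarantees that when $\chrs \F \neq 3$ and $\omega \notin \F$ a line with square zero admits no other square-zero line in its orthogonalizer, so each star has a unique center.
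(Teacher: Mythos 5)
Your proposal is correct and follows essentially the same route as the paper, whose proof simply combines Propositions~\ref{proposition:connectivity-component-with-type3-elem} and~\ref{proposition:idempotent-existence}, Lemmas~\ref{lemma:star-graph}, \ref{lemma:diameter-five}, and~\ref{lemma:orthogonality-characteristic-three}, and Example~\ref{example:distance-five}, plus a closing remark exhibiting a type-(1) component (the paper uses $x = z_{1,0}$ with $\n(x,x*x)=1$, paralleling your witnesses $\{[z_{1,0}],[z_{2,0}]\}$ and $\{[z_{0,1}],[z_{0,2}]\}$, which are available in general by Theorem~\ref{theorem:isotropic-norm}). Your extra bookkeeping --- the involution $[x]\mapsto[x*x]$, the verification via Eq.~\eqref{equation:xxxx} that vertices with $x*x\neq 0$ and $\n(x,x*x)=0$ appear as star leaves, and the uniqueness of star centers via Corollary~\ref{corollary:field-condition} --- is sound and merely makes explicit what the paper leaves to the cited lemmas.
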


    \begin{proof}
        Follows by combining the results of Propositions~\ref{proposition:connectivity-component-with-type3-elem} and~\ref{proposition:idempotent-existence}, Lemmas~\ref{lemma:star-graph}, \ref{lemma:diameter-five}, and \ref{lemma:orthogonality-characteristic-three}, and Example~\ref{example:distance-five}.

        We remark that any Okubo algebra with isotropic norm and nonzero idempotents contains an element $x \in Z(\Okubo)$ such that $\n(x, x*x) \neq 0$, and thus there are connected components of the form $V_x$ in $\Gamma_O(\Okubo)$. Indeed, for $x = z_{1,0}$ we have $x*x=z_{2,0}$ and $\n(x,x*x) = 1$.
    \end{proof}

    \section{Number of shortest paths}

    In this section we study the number of shortest paths between an arbitrary pair of vertices in the orthogonality graph of an Okubo algebra with isotropic norm. First, we note that if $\Okubo$ is not a split Okubo algebra over a field~$\F$, such that either $\chrs \F = 3$ or $\F$ contains a primitive cube root of unity~$\omega$, then all shortest paths in $\Gamma_O(\Okubo)$ are unique.

    \begin{proposition} \label{proposition:unique-path}
    Let $\Okubo$ be an Okubo algebra with isotropic norm over a field~$\F$. Assume that at least one of the following conditions is satisfied:
    \begin{enumerate}[{\rm (1)}]
        \item $\Okubo$ does not contain nonzero idempotents;
        \item $\chrs \F \neq 3$ and $\omega \notin \F$;
        \item $\chrs \F = 3$, and the algebra $\Okubo$ is not split.
    \end{enumerate}
    Then all shortest paths in $\Gamma_O(\Okubo)$ are unique. In other words, if $\D([x],[y]) = k < \infty$ for some $x, y \in Z(\Okubo)$, then there exists a unique path of length~$k$ between the vertices $[x]$ and~$[y]$.
    \end{proposition}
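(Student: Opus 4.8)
The proposition splits into three cases based on the algebra's properties. Let me think about what needs to be shown in each case.The plan is to handle the three cases separately, since they correspond to structurally different connected components whose shortest paths must all be shown to be unique. In every case the strategy is to fix two vertices $[x], [y]$ with $\D([x],[y]) = k < \infty$ and rule out the existence of a second path of length~$k$. The key observation tying everything together is Corollary~\ref{corollary:orthogonalizer-formula}: a zero divisor $x$ with $x*x \neq 0$ has a one-dimensional orthogonalizer $O_{\Okubo}(x) = \F(x*x)$, so $[x]$ is a leaf adjacent only to $[x*x]$. Hence any vertex of this ``leaf'' type contributes no branching, and all the interesting structure lives among vertices $[x]$ with $x*x = 0$.

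First I would dispose of case~(1). If $\Okubo$ has no nonzero idempotent, then by Proposition~\ref{proposition:idempotent-existence} there is no nonzero $x$ with $\n(x, x*x) = 0$; equivalently, every zero divisor satisfies $\n(x, x*x) \neq 0$. By Proposition~\ref{proposition:connectivity-component-with-type3-elem} the entire graph is a disjoint union of two-vertex components $V_x = \{[x], [x*x]\}$ of diameter~$1$, in which the only paths are the single edges and the trivial constant paths. Uniqueness is then immediate.

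Next I would treat case~(2), where $\chrs \F \neq 3$ and $\omega \notin \F$. Here Lemma~\ref{lemma:star-graph} shows that the components formed by elements with $x*x = 0$ are star graphs with internal node $[x]$, while the remaining components are again the two-vertex sets $V_x$. In a star graph every pair of vertices is joined by a unique geodesic: two distinct leaves $[u], [v]$ are connected only through the centre $[x]$, giving the unique length-$2$ path $[u] \leftrightarrow [x] \leftrightarrow [v]$ (uniqueness follows because, by Corollary~\ref{corollary:orthogonalizer-formula}(1), each leaf has a one-dimensional orthogonalizer, so $[x]$ is its only neighbour), and leaf-to-centre paths are single edges. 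Thus all shortest paths are unique.

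The genuinely delicate case is~(3): $\chrs \F = 3$ and $\Okubo$ not split. Here the relevant component is $V = \{[x] : \n(x, x*x) = 0\}$, which by Lemma~\ref{lemma:orthogonality-characteristic-three} has diameter~$3$. The hard part will be verifying uniqueness for the distance-$2$ and distance-$3$ pairs, and I expect the main obstacle to lie in the distance-$3$ case. For distance~$2$, Lemma~\ref{lemma:length-two-path} already asserts that the connecting middle vertex $y*x$ (equivalently $x*y$) is unique up to proportionality, so only one geodesic exists. For distance~$3$, I would invoke Proposition~\ref{proposition:m-paths-length-three}: the number of length-$3$ paths between $[x]$ and $[y]$ equals the number~$m$ of distinct two-dimensional subspaces $W \ni y$ on which the squaring map vanishes. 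The crucial point, already extracted inside the proof of Lemma~\ref{lemma:orthogonality-characteristic-three}, is that when $\Okubo$ is non-split and $\chrs \F = 3$, the only elements $z \in O_{\Okubo}(u_k)$ with $z*z = 0$ lie in $\spn\{x*x, y*y\}$; the non-split condition ($\beta \neq (-k)^3$ for all $k$) precisely forces $w_k * w_k \neq 0$, eliminating all but one such subspace and yielding $m = 1$. I would therefore reduce the claim to re-deriving this $m = 1$ conclusion from the Lemma~\ref{lemma:orthogonality-characteristic-three} computation, and finish by noting that leaf-type vertices $[x]$ with $x*x \neq 0$ again introduce no additional branching, so every geodesic in $V$ is unique.
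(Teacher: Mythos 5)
Your proposal is correct and takes essentially the same route as the paper: cases (1) and (2) are read off from the explicit component structure in Proposition~\ref{proposition:connectivity-component-with-type3-elem} and Lemma~\ref{lemma:star-graph}, and case (3) is derived from the computation in the proof of Lemma~\ref{lemma:orthogonality-characteristic-three}, exactly as the paper's proof does. One minor remark: in the non-split characteristic-$3$ case all zero-square elements lie in the single pairwise-orthogonal plane $\spn\{x*x, y*y\}$, so there are no pairs with $x*x = y*y = 0$ at distance $2$ or $3$, making your appeals to Lemma~\ref{lemma:length-two-path} and Proposition~\ref{proposition:m-paths-length-three} there vacuous (though harmless); the genuine distance-$3$ pairs are leaf--leaf pairs, which your closing ``no branching at leaves'' observation correctly settles.
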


    \begin{proof}
    The uniqueness of the shortest paths in cases (1) and (2) follows automatically from the explicit form of the connected components, see Proposition~\ref{proposition:connectivity-component-with-type3-elem} and Lemma~\ref{lemma:star-graph}, while case~(3) follows from the proof of Lemma~\ref{lemma:orthogonality-characteristic-three}.
    \end{proof}

    If an Okubo algebra $\Okubo$ with isotropic norm contains nonzero idempotents, $\chrs \F \neq 3$ and $\omega \in \F$, then, by Theorem~\ref{theorem:table-of-isotropic-Okubo-with-idempotent}(1),
    $\Okubo$ is split, and Proposition~\ref{proposition:split-pseudo-octonion} implies that it is isomorphic to the pseudo-octonion algebra $P_8(\F)$ which was introduced in Subsection~2.1. By using the matrix form of the elements in $P_8(\F)$, we show that all elements $z \in Z(P_8(\F))$ such that $z*z = 0$ belong to the same orbit under the action of the automorphism group. This observation allows to investigate the number of shortest paths between an arbitrary pair of zero divisors lying in the same connected component.

    \begin{proposition} \label{proposition:two-paths-length-three}
    Let $\Okubo$ be the split Okubo algebra over a field~$\F$, $\chrs \F \neq 3$ and $\omega \in \F$. Let the elements $x, y \in Z(\Okubo)$ be such that $x * x = y * y = 0$ and $\D([x],[y]) = 3$. Then there exist exactly two paths of length~$3$ between the vertices $[x]$ and $[y]$.
    \end{proposition}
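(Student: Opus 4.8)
The plan is to reduce the count to Proposition~\ref{proposition:m-paths-length-three} and then to evaluate the resulting invariant on a single convenient vertex, transporting the answer to every other vertex by an automorphism. By Proposition~\ref{proposition:m-paths-length-three}, the number of paths of length~$3$ between $[x]$ and $[y]$ equals the number~$m$ of distinct two-dimensional subspaces $W \ni y$ of $O_{\Okubo}(y)$ consisting entirely of elements $w$ with $w*w = 0$. So the whole task is to show that this locus of zero-square elements in the three-dimensional space $O_{\Okubo}(y)$ is the union of exactly two planes through $\F y$, for every $y \in Z(\Okubo)$ with $y*y = 0$.

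First I would show that $m$ is the same for all such $y$. Since $\chrs \F \neq 3$ and $\omega \in \F$, Proposition~\ref{proposition:split-pseudo-octonion} identifies $\Okubo$ with $P_8(\F) = \frsl_3(\F)$ under the product~\eqref{equation:product}. Conjugation $z \mapsto g z g^{-1}$ by $g \in \GL_3(\F)$ preserves tracelessness, the associative product and the trace, hence is an automorphism of $P_8(\F)$, so $\mathrm{PGL}_3(\F)$ embeds into $\Aut(P_8(\F))$. Any automorphism $\phi$ preserves the norm and commutes with the squaring map $z \mapsto z*z$, so it maps $O_{\Okubo}(y)$ isomorphically onto $O_{\Okubo}(\phi(y))$ and carries zero-square planes to zero-square planes; thus $m$ is constant on $\Aut$-orbits. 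It remains to see that all lines $[y]$ with $y*y = 0$ form one orbit. For $z \in \frsl_3(\F)$ the product~\eqref{equation:product} gives $z*z = z^2 - \tfrac{1}{3}\tr(z^2)I$, and every zero divisor has $\n(z) = 0$ by Proposition~\ref{proposition:norm-of-zero-divisor}, hence $\tr(z^2) = 0$; therefore $z*z = 0$ is equivalent to $z^2 = 0$. A nonzero $3\times 3$ matrix with $z^2 = 0$ has rank~$1$, so $z = uv^{T}$ with $u,v \neq 0$ and $v^{T}u = 0$, and the line $[z]$ is encoded by the incident pair $(\F u, \ker v^{T})$ of a point and a line in the projective plane $\mathbb{P}(\F^3)$. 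Conjugation acts on this pair through the natural action of $\GL_3(\F)$, which is transitive on incident point-line pairs; hence all such $[y]$ lie in a single orbit.

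Finally I would evaluate $m$ on the concrete element $y = z_{0,2} + z_{1,2} + z_{2,2}$ of Example~\ref{example:length-two-path}, for which $O_{\Okubo}(y) = \spn\{y, a, b\}$ with $a = z_{0,1} - z_{1,1}$ and $b = z_{0,1} - z_{2,1}$. As $a, b \in O_{\Okubo}(y)$ one has $y*a = a*y = y*b = b*y = 0$, while a short read-off from Table~\ref{table:okubo-algebra-isotropic} gives $a*a = b*b = y$ and $a*b + b*a = y$. Hence
\[
(\lambda y + \gamma a + \delta b)*(\lambda y + \gamma a + \delta b) = (\gamma^2 + \gamma\delta + \delta^2)\, y,
\]
so the zero-square locus of $O_{\Okubo}(y)$ is cut out by $\gamma^2 + \gamma\delta + \delta^2 = 0$. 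Because $\omega \in \F$ and $\chrs \F \neq 3$, this binary form factors as $(\gamma - \omega\delta)(\gamma - \omega^2\delta)$ into two distinct linear forms, so the locus is exactly the union of the two planes $\{\gamma = \omega\delta\}$ and $\{\gamma = \omega^2\delta\}$, both containing $\F y$; this also confirms that the hypothesis of Proposition~\ref{proposition:m-paths-length-three} is met. Thus $m = 2$, as required.

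The main obstacle is the orbit argument of the second step: one must pass to the matrix model, recognise that $z*z = 0$ together with $\n(z) = 0$ forces $z^2 = 0$, and then verify transitivity of $\GL_3(\F)$ on incident point-line pairs. Once this is in place, the remaining steps are a direct appeal to Proposition~\ref{proposition:m-paths-length-three} and the explicit factorisation of $\gamma^2 + \gamma\delta + \delta^2$, where the presence of $\omega$ in~$\F$ is precisely what makes the two planes distinct.
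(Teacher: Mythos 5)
Your proposal is correct and follows essentially the same route as the paper: reduce to Proposition~\ref{proposition:m-paths-length-three}, show via the matrix model $P_8(\F)$ that $z*z=0$ forces $z^2=0$ and that all such lines form a single orbit under the conjugation automorphisms $z \mapsto gzg^{-1}$, and then read off $m=2$ from the computation of Example~\ref{example:length-two-path}. The only cosmetic difference is that you establish transitivity via the rank-one factorization $z = uv^{T}$ and incident point--line pairs, where the paper conjugates directly to the Jordan normal form; your explicit factorization $\gamma^2+\gamma\delta+\delta^2 = (\gamma-\omega\delta)(\gamma-\omega^2\delta)$ likewise matches the two solutions exhibited in the example.
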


    \begin{proof}
    In view of Proposition~\ref{proposition:m-paths-length-three}, it is sufficient to show that all elements $w \in O_{\Okubo}(y)$ such that $w*w = 0$ constitute the union of two distinct two-dimensional subspaces $W \ni y$.
    
    We use the isomorphism between $\Okubo$ and $P_8(\F)$ and consider an arbitrary element $z \in Z(P_8(\F))$ such that $z*z=0$. Then $\n(z) = 0$ and, by Eq.~\eqref{equation:product}, $z*z=zz = 0$, so the Jordan normal form of the matrix~$z$ is
    \[
    J_z = \begin{pmatrix}
			0 &  1 & 0 \\
			0  & 0 & 0 \\
			0  &  0 & 0
		\end{pmatrix}.
    \]
    Since $J_z = gzg^{-1}$ for some matrix $g \in GL_3(\F)$, the automorphism of the algebra $P_8(\F)$ defined by $a \mapsto gag^{-1}$ maps $z$ to $J_z$. It follows that all nonzero elements $z \in P_8(\F)$ which satisfy $z*z = 0$ belong to the same orbit under the action of the automorphism group on $P_8(\F)$. Example~\ref{example:length-two-path} demonstrates that all elements $w \in O_{\Okubo}(z_{0, 2} + z_{1, 2} + z_{2, 2})$ such that $w*w = 0$ constitute the union of subspaces
    \[
    \spn\{z_{0, 2} + z_{1, 2} + z_{2, 2}, \omega^2 z_{0,1} + \omega z_{2,1} + z_{1,1} \} \cup \spn\{z_{0, 2} + z_{1, 2} + z_{2, 2}, \omega^2 z_{0,1} + \omega z_{1,1} + z_{2,1} \}.
    \]
    Therefore, the desired statement is satisfied for an arbitrary $z$, including $z = y$.
    \end{proof}

    \begin{corollary} \label{corollary:unique-path}
    Let $\Okubo$ be the split Okubo algebra over a field~$\F$, $\chrs \F \neq 3$ and $\omega \in \F$. Let the elements $x, y \in Z(\Okubo)$ be such that $\D([x],[y]) = k$ in $\Gamma_O(\Okubo)$, $0 \leq k \leq 5$. Then either there exists a unique path of length $k$ between $[x]$ and $[y]$, or there are exactly two such paths. The second possibility is achieved precisely in the following cases:
    \begin{enumerate}[{\rm (1)}]
        \item $\D([x],[y]) = 3$, and $x*x = y*y = 0$;
        \item $\D([x],[y]) = 4$, and $x*x = 0$ or $y*y = 0$;
        \item $\D([x],[y]) = 5$.
    \end{enumerate}
    \end{corollary}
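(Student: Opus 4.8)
The plan is to exploit the sharp dichotomy among the vertices of the connected component~$V$. Every vertex $[x] \in V$ satisfies $\n(x, x*x) = 0$, so by Corollary~\ref{corollary:orthogonalizer-formula} together with Eq.~\eqref{equation:xxxx} it is of exactly one of two kinds: either $x*x = 0$ (an \emph{internal} vertex), in which case $O_{\Okubo}(x)$ is three-dimensional; or $x*x \neq 0$, in which case $O_{\Okubo}(x) = \F(x*x)$ and $(x*x)*(x*x) = 0$, so that $[x]$ is a \emph{leaf} whose unique neighbour is the internal vertex $[x*x]$. First I would record this classification and extract the resulting reduction principle: since a leaf has degree one, every path issuing from a leaf $[x]$ must begin with the edge $[x] \leftrightarrow [x*x]$, and no shortest path between internal vertices can pass through a leaf as an interior vertex. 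Consequently $\D([x],[y]) = 1 + \D([x*x],[y])$ whenever $[x]$ is a leaf, and replacing the leaf endpoint $[x]$ by the internal vertex $[x*x]$ sets up a bijection between the shortest paths from $[x]$ to $[y]$ and those from $[x*x]$ to $[y]$. Applying this at each leaf endpoint reduces the enumeration to the case of two internal vertices.

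Next I would settle the internal case completely. For $x', y'$ with $x'*x' = y'*y' = 0$ one has $\D([x'],[y']) \in \{0,1,2,3\}$, the bound $\leq 3$ being exactly what is established in the proof of Lemma~\ref{lemma:diameter-five}. For distances $0$ and $1$ the shortest path is trivially unique; for distance $2$ the middle vertex is forced to be $[x'*y']$ or $[y'*x']$ by Lemma~\ref{lemma:length-two-path}, so the path is again unique; and for distance $3$ Proposition~\ref{proposition:two-paths-length-three} supplies exactly two shortest paths. Thus among internal vertices the number of shortest paths equals $2$ precisely when the distance is $3$, and equals $1$ otherwise.

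It then remains to combine the reduction with this base case. Writing $k = \D([x],[y])$ and letting $r$ be the distance between the internal vertices obtained after replacing each leaf endpoint, we have $r = k$, $k-1$, or $k-2$ according as none, one, or both of $[x],[y]$ are leaves, and by the bijection the number of shortest paths between $[x]$ and $[y]$ equals that between the two internal vertices, hence is $2$ exactly when $r = 3$ and $1$ otherwise. Translating $r = 3$ back through the reduction yields precisely the three listed cases: if neither endpoint is a leaf then $k = r = 3$ with $x*x = y*y = 0$, giving~(1); if exactly one endpoint is a leaf then $k = r + 1 = 4$ and the other endpoint satisfies $x*x = 0$ or $y*y = 0$, giving~(2); and if both endpoints are leaves then $k = r + 2 = 5$, giving~(3). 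The only point requiring care is to verify that the remaining combinations cannot produce $r = 3$ and that the three cases are mutually exclusive: two internal vertices lie at distance at most~$3$, so $k \in \{4,5\}$ is impossible for them, while a single-leaf configuration satisfies $k \leq 1 + 3 = 4$, so $k = 5$ forces both endpoints to be leaves. These inequalities guarantee that the three cases do not overlap and exhaust all occurrences of two shortest paths, every other configuration having $r \leq 2$ and hence a unique shortest path. I expect this final bookkeeping---the distance arithmetic and the mutual exclusivity of the cases---to be the only delicate part, since the genuinely nontrivial input, namely the count of two paths at internal distance~$3$, is already provided by Proposition~\ref{proposition:two-paths-length-three}.
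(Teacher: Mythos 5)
Your proposal is correct and follows essentially the same route as the paper, whose proof simply combines the leaf-versus-internal dichotomy implicit in the proof of Lemma~\ref{lemma:diameter-five} (a vertex with $x*x\neq 0$, $\n(x,x*x)=0$ has $[x*x]$ as its unique neighbour) with uniqueness at distance~$2$ from Lemma~\ref{lemma:length-two-path} and the two-path count at internal distance~$3$ from Proposition~\ref{proposition:two-paths-length-three}; your explicit path bijection and distance arithmetic is exactly the bookkeeping the paper leaves implicit. The only point left tacit in your write-up is the pairs lying in the two-element components $V_x$ with $\n(x,x*x)\neq 0$ (Proposition~\ref{proposition:connectivity-component-with-type3-elem}), but there the distance is at most~$1$ and uniqueness is trivial, so nothing is lost.
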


    \begin{proof}
    Follows immediately from Proposition~\ref{proposition:connectivity-component-with-type3-elem}, Lemma~\ref{lemma:length-two-path}, the proof of Lemma~\ref{lemma:diameter-five} and Proposition~\ref{proposition:two-paths-length-three}.
    \end{proof}

    It remains to consider the case of the split Okubo algebra over a field~$\F$ with $\chrs \F = 3$. To do this, we will need the construction of such an algebra via a Petersson algebra associated to an eight-dimensional split Hurwitz algebra over~$\F$, i.e., a unital composition algebra over~$\F$ with isotropic norm. Such a Hurwitz algebra is unique, and it is isomorphic to the Zorn vector-matrix algebra. We rely on the construction of the Petersson algebra and its properties given in Elduque's paper~\cite{Eld18}.

    \begin{definition}
    The Zorn vector-matrix algebra is the composition algebra $(\mathcal{C}, \cdot, \n)$ which consists of all $2 \times 2$ matrices of the following form:
    \begin{center}
    $\begin{pmatrix}
    a&u\\
    v&b\\
    \end{pmatrix}$, where $a, b \in \F, u, v \in \F^3$,
    \end{center}
    and
    \begin{align*}
    \begin{pmatrix}
    a&u\\
    v&b\\
    \end{pmatrix}
    \begin{pmatrix}
    a'&u'\\
    v'&b'\\
    \end{pmatrix}
    &=
    \begin{pmatrix}
    a a' + (u \, | \, v') & a u' + b' u - v \times v'\\
    a' v + b v' + u \times u' & b b' + (v \, | \, u')\\
    \end{pmatrix},\\
    \n\left(\begin{pmatrix}
    a&u\\
    v&b\\
    \end{pmatrix}
    \right)
    &= ab - (u \, | \, v),
    \end{align*}
    where $(\cdot \, | \, \cdot)$ and $\times$ denote the scalar and vector products of elements in $\F^3$, respectively.
    \end{definition}

    \begin{proposition}[{\cite[pp.~1040--1041]{Eld18}}]
    Let $\{ f_1, f_2, f_3 \}$ be the standard basis of~$\F^3$. Then the elements
    \[
    e_1 = \begin{pmatrix}
    1&0\\
    0&0\\
    \end{pmatrix},
    \quad
    e_2 = \begin{pmatrix}
    0&0\\
    0&1\\
    \end{pmatrix},
    \quad
    u_i = \begin{pmatrix}
    0&-f_i\\
    0&0\\
    \end{pmatrix},
    \quad
    v_i = \begin{pmatrix}
    0&0\\
    f_i&0\\
    \end{pmatrix},
    \quad
    i = 1,2,3,
    \]
    form the so-called {\em canonical basis} of the Zorn vector-matrix algebra $(\mathcal{C}, \cdot, \n)$. The multiplication table in this basis is given in Table~\ref{table:zorn-algebra}.
    \begin{table}[H]
    \centering
    $
    \begin{array}{|c||cc|ccc|ccc|}
    \hline
    \vphantom{\Big|} \cdot & e_1  & e_2 & u_1 & u_2& u_3 & v_1 & v_2 & v_3 \\\hline\hline
    \vphantom{\Big|} e_1 & e_1 & 0 & u_1 & u_2 & u_3 & 0 & 0 & 0 \\
    \vphantom{\Big|} e_2 & 0 & e_2 & 0 & 0 & 0 & v_1 & v_2 & v_3\\\hline
    \vphantom{\Big|} u_1 & 0 & u_1 & 0 & v_3 & -v_2 & -e_1 & 0 & 0\\
    \vphantom{\Big|} u_2 & 0 & u_2 & -v_3 & 0 & v_1 & 0 & -e_1 & 0\\
    \vphantom{\Big|} u_3 & 0 & u_3 & v_2 & -v_1 & 0 & 0 & 0 & -e_1\\\hline
    \vphantom{\Big|} v_1 & v_1 & 0 & -e_2 & 0 & 0 & 0 & u_3 & -u_2\\
    \vphantom{\Big|} v_2 & v_2 & 0 & 0 & -e_2 & 0 & -u_3 & 0 & u_1\\
    \vphantom{\Big|} v_3 & v_3 & 0 & 0 & 0 & -e_2 & u_2 & -u_1 & 0\\\hline
    \end{array}
    $
    \caption{\label{table:zorn-algebra} Multiplication table in a canonical basis of the Zorn vector-matrix algebra.}
    \end{table}
    \end{proposition}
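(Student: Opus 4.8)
The plan is to prove the statement by direct verification, since both assertions---that the eight listed elements form a basis and that their products are those recorded in Table~\ref{table:zorn-algebra}---reduce to elementary computations with the scalar and vector products on $\F^3$. First I would observe that the underlying space $\mathcal{C}$ splits as a direct sum $\F e_1 \oplus \F e_2 \oplus \mathcal{U} \oplus \mathcal{V}$, where $\mathcal{U}$ consists of the matrices with a single nonzero upper-right entry and $\mathcal{V}$ of those with a single nonzero lower-left entry. Since $\mathcal{U}$ and $\mathcal{V}$ are each isomorphic to $\F^3$ via $u \mapsto \begin{pmatrix} 0 & u \\ 0 & 0 \end{pmatrix}$ and $v \mapsto \begin{pmatrix} 0 & 0 \\ v & 0 \end{pmatrix}$, the space $\mathcal{C}$ has dimension $1 + 1 + 3 + 3 = 8$, and the elements $\{ e_1, u_1, u_2, u_3 \}$ (resp.\ $\{ e_2, v_1, v_2, v_3 \}$) are precisely the images of the standard coordinate vectors under these identifications. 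Hence $\{ e_1, e_2, u_1, u_2, u_3, v_1, v_2, v_3 \}$ is a basis.

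For the multiplication table I would exploit the bilinearity of the product together with the block structure, which reduces the $64$ products to a few cases. Reading off the parameters from the definitions ($e_1$ has $a = 1$ and all other entries zero, $u_i$ has $u = -f_i$, and so on) and substituting into the multiplication formula, the diagonal idempotents satisfy $e_1 e_1 = e_1$, $e_2 e_2 = e_2$, $e_1 e_2 = e_2 e_1 = 0$, while $e_1$ acts as a left identity and $e_2$ as a right identity on $\mathcal{U}$, with the roles reversed on $\mathcal{V}$; this accounts for every product involving $e_1$ or $e_2$. The remaining products fall into four families: computing $u_i u_j$ leaves only the lower-left entry $(-f_i) \times (-f_j) = f_i \times f_j$, so $u_i u_j$ lands in $\mathcal{V}$ and is governed by the cross product of standard basis vectors; symmetrically, $v_i v_j$ leaves only the upper-right entry $-(f_i \times f_j)$ and lands in $\mathcal{U}$; while $u_i v_j$ contributes only the scalar $(-f_i \mid f_j) = -\delta_{ij}$ to the $(1,1)$-entry, giving $-\delta_{ij} e_1$, and $v_i u_j$ contributes $(f_i \mid -f_j) = -\delta_{ij}$ to the $(2,2)$-entry, giving $-\delta_{ij} e_2$. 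Using $f_i \times f_j = \varepsilon_{ijk} f_k$ and $(f_i \mid f_j) = \delta_{ij}$ one recovers every entry of Table~\ref{table:zorn-algebra}.

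I do not expect a genuine obstacle here; the difficulty is purely one of bookkeeping. The points that require care are the sign conventions built into the definitions---the minus sign in $u_i = \begin{pmatrix} 0 & -f_i \\ 0 & 0 \end{pmatrix}$ and the term $-v \times v'$ in the upper-right block of the product---together with the orientation of the cross product, which fixes the cyclic pattern $f_1 \times f_2 = f_3$, $f_2 \times f_3 = f_1$, $f_3 \times f_1 = f_2$ responsible for the placement of $v_k$ (resp.\ $u_k$) in the $u_i u_j$ (resp.\ $v_i v_j$) entries. For completeness I would also confirm that each basis vector is isotropic, $\n(e_1) = \n(e_2) = \n(u_i) = \n(v_i) = 0$, directly from $\n\left( \begin{pmatrix} a & u \\ v & b \end{pmatrix} \right) = ab - (u \mid v)$, a fact that will be needed when this canonical basis is subsequently used in the Petersson construction of the split Okubo algebra.
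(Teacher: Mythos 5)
Your proposal is correct, and it matches the situation in the paper exactly: the proposition is stated there without proof, as a citation of Elduque \cite[pp.~1040--1041]{Eld18}, and your direct verification --- the dimension count via the decomposition $\F e_1 \oplus \F e_2 \oplus \mathcal{U} \oplus \mathcal{V}$ together with the case-by-case computation using $(f_i \mid f_j) = \delta_{ij}$ and $f_i \times f_j = \varepsilon_{ijk} f_k$ --- is precisely the routine check that the citation stands for, and every entry you derive agrees with Table~\ref{table:zorn-algebra}. The only immaterial slip is that under your identification $u \mapsto \begin{pmatrix} 0 & u \\ 0 & 0 \end{pmatrix}$ the element $u_i$ is the image of $-f_i$ rather than $f_i$, which does not affect the linear independence argument.
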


    Note that an arbitrary Hurwitz algebra $(\mathcal{C}, \cdot, \n)$ is endowed with an involution which is called the {\em standard conjugation}: $\overline{x} = \n(1,x)1-x$. Then $x \cdot \overline{x} = \overline{x} \cdot x = \n(x)1$ for all $x \in \mathcal{C}$. In particular, in the case of the Zorn vector-matrix algebra, for any
    \[
    x = \begin{pmatrix}
    a&u\\
    v&b\\
    \end{pmatrix}
    \]
    it holds that $\n(1,x) = \tr(x) = a+b$, and then
    \[
    \overline{x} = \begin{pmatrix}
    b&-u\\
    -v&a\\
    \end{pmatrix}.
    \]

    \begin{definition}[{\cite[Definition~2.7]{Eld18}}]
        Let $(\mathcal{C}, \cdot, \n)$ be a Hurwitz algebra over a field~$\F$, and $\tau \in \Aut(\mathcal{C}, \cdot, \n)$ be an automorphism such that $\tau^3 = \id$. We define a new multiplication on $\mathcal{C}$ by the formula
        \begin{equation} \label{equation:petersson}
            x * y = \tau(\overline{x}) \cdot \tau^2(\overline{y}).
        \end{equation}
        Then the algebra $(\mathcal{C}, *, \n)$ is called a {\em Petersson algebra} and is denoted by $\mathcal{C}_{\tau}$.
    \end{definition}

    \begin{proposition}[{\cite[p.~1042]{Eld18}}] \label{proposition:reverse-Petersson}
    Let $(\mathcal{S}, *, \n)$ be a symmetric composition algebra, and $e \in \mathcal{S}$ be a nonzero idempotent. Then the mapping
    \begin{equation} \label{equation:tau-automorphism}
        \tau: x \mapsto e*(e*x) = \n(e,x)e - x*e
    \end{equation}
    is an automorphism of $(\mathcal{S}, *, \n)$, and $\tau^3 = \id$. Besides, one can define a new multiplication
    \begin{equation} \label{equation:Hurwitz-multiplication}
        x \cdot y = (e*x)*(y*e)
    \end{equation}
    on $\mathcal{S}$, and then $(\mathcal{S}, \cdot, \n)$ is a Hurwitz algebra with unity~$e$. Moreover, $\tau$ is also an automorphism of $(\mathcal{S}, \cdot, \n)$. The algebra $(\mathcal{S}, *, \n)$ is the Petersson algebra associated to the Hurwitz algebra $(\mathcal{S}, \cdot, \n)$ and its automorphism $\tau$, that is, Eq.~\eqref{equation:petersson} is satisfied.
    \end{proposition}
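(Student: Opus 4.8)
The plan is to extract everything from the two identities that are available in any symmetric composition algebra: the flexible law $(x*y)*x = x*(y*x) = \n(x)\,y$ of Theorem~\ref{theorem:symmetric-flexible}(2) and its linearization~\eqref{equation:lin-of-symm}. First I would record the elementary facts about the idempotent. Since $\n(e) = \n(e*e) = \n(e)^2$ and, by the flexible law, $(e*e)*e = \n(e)\,e$ while also $(e*e)*e = e*e = e$, a nonzero idempotent satisfies $e = \n(e)\,e$, hence $\n(e) = 1$. Putting $x=e$ into Theorem~\ref{theorem:symmetric-flexible}(2) gives the two cancellation rules $(e*y)*e = e*(y*e) = y$, so the maps $L_e\colon y\mapsto e*y$ and $R_e\colon y\mapsto y*e$ are mutually inverse bijections; moreover each preserves the norm because $\n(e*y)=\n(e)\n(y)=\n(y)$. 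In particular $\tau = L_e^2$ is an isometry, which already yields $\n(\tau(x)) = \n(x)$.

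Next I would obtain the closed forms and $\tau^3 = \id$. Substituting $x=y=e$ into~\eqref{equation:lin-of-symm} gives $e*(e*x) + x*e = \n(e,x)\,e$, i.e.\ the stated formula $\tau(x) = \n(e,x)\,e - x*e$; the mirror substitution gives $(x*e)*e = \n(e,x)\,e - e*x$. Using these together with $\n(e,e*x) = \n(e*e,x) = \n(e,x)$ from~\eqref{equation:symmetric}, I would compute $\tau^2(x) = \n(e,x)\,e - e*x$ and then $\tau^3(x) = x$ directly. Equivalently, one checks that $L_e^3$ is the conjugation $x\mapsto \n(e,x)\,e - x = \overline{x}$ (with respect to the unit $e$), which is an involution, whence $L_e^6 = \tau^3 = \id$.

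The main step is multiplicativity, $\tau(u*v) = \tau(u)*\tau(v)$. Expanding $\tau(u) = \n(e,u)\,e - u*e$ and $\tau(v) = \n(e,v)\,e - v*e$, the cancellation rules and the mirror formula $(u*e)*e = \n(e,u)\,e - e*u$ reduce the product to the single term $(u*e)*(v*e)$. I would evaluate that term by two applications of~\eqref{equation:lin-of-symm}: first with $(u*e,\,v,\,e)$ to peel off $u*e$, then with $(v,\,u,\,e)$ to rewrite $v*(u*e)$, arriving at
\[
(u*e)*(v*e) = \n(e,u)\,v - \n(e,v)\,(e*u) + \tau(u*v).
\]
Substituting this back, every remaining term cancels and leaves exactly $\tau(u*v)$. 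This is the place I expect to be the main obstacle, and the natural pitfall is that, although $(e*y)*e = y$, one has $(y*e)*e = \tau^{-1}(y)\neq y$ in general, so the two cancellation rules must be kept distinct.

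Finally I would assemble the Hurwitz statement and the Petersson identity. For $x\cdot y = (e*x)*(y*e)$ the cancellation rules give $e\cdot y = e*(y*e) = y$ and $y\cdot e = (e*y)*e = y$, so $e$ is a two-sided unit, while $\n(x\cdot y) = \n(e*x)\,\n(y*e) = \n(x)\n(y)$ shows the (unchanged, strictly nondegenerate) norm composes; a unital composition algebra is Hurwitz. Since $\tau$ is a $*$-automorphism fixing $e$, and $\cdot$ is built only from $*$ and $e$, it is immediate that $\tau(x\cdot y) = (e*\tau(x))*(\tau(y)*e) = \tau(x)\cdot\tau(y)$. For~\eqref{equation:petersson} I would use $\overline{x} = L_e^3(x)$ to get $\tau(\overline{x}) = L_e^5(x) = x*e$ and $\tau^2(\overline{y}) = L_e^7(y) = e*y$ (invoking $L_e^6 = \id$), so that $\tau(\overline{x})\cdot\tau^2(\overline{y}) = (x*e)\cdot(e*y) = \big(e*(x*e)\big)*\big((e*y)*e\big) = x*y$ by the cancellation rules, which is precisely the Petersson product.
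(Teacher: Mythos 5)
Your proof is correct, and there is nothing in the paper to compare it against line by line: the paper does not prove this proposition but imports it wholesale from Elduque \cite[p.~1042]{Eld18}. Your self-contained derivation from Theorem~\ref{theorem:symmetric-flexible}(2) and the linearization~\eqref{equation:lin-of-symm} is essentially the standard argument from the literature, and all the checkpoints work: $\n(e)=1$ follows from $e=(e*e)*e=\n(e)e$; the substitutions $(e,e,x)$ in~\eqref{equation:lin-of-symm} give both the stated formula $\tau(x)=\n(e,x)e-x*e$ and the mirror identity $(x*e)*e=\n(e,x)e-e*x$; and the central computation checks out, since applying the middle identity of~\eqref{equation:lin-of-symm} with $(u*e,v,e)$ gives $(u*e)*(v*e)=\n(e,u)v-e*(v*(u*e))$ and then with $(v,u,e)$ gives $v*(u*e)=\n(e,v)u-e*(u*v)$, yielding exactly your displayed formula $(u*e)*(v*e)=\n(e,u)v-\n(e,v)(e*u)+\tau(u*v)$, after which the cross terms in $\tau(u)*\tau(v)$ cancel. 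Your observation that $L_e^3(x)=\n(e,x)e-x$ is the standard conjugation with respect to the unit $e$, so that $L_e^6=\id$, $\tau(\overline{x})=L_e^5(x)=x*e$ and $\tau^2(\overline{y})=L_e(y)=e*y$, is a particularly clean way to get both $\tau^3=\id$ and the Petersson identity~\eqref{equation:petersson} in one stroke; you also correctly identified the genuine pitfall, namely that $(y*e)*e=\tau^2(y)\neq y$ in general, so the two one-sided cancellation rules must not be conflated. Two points that deserve an explicit sentence rather than an implicit appeal: $\tau(e)=e*(e*e)=e$ (which you use when transporting $\tau$ to an automorphism of $(\mathcal{S},\cdot,\n)$), and the bijectivity of $\tau=L_e^2$ (immediate since $L_e$ and $R_e$ are mutually inverse), both of which are one-line remarks and do not affect the validity of the argument. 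What your route buys over the paper's citation is independence from~\cite{Eld18}: everything is extracted from the two identities already stated in Section~2 of the paper.
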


    \begin{proposition}[{\cite[Proposition~2.6]{Eld18}}]
    Let $(\mathcal{S}, *, \n)$ be a symmetric composition algebra, $e \in \mathcal{S}$ be a nonzero idempotent, and $\tau$ be the automorphism defined in Eq.~\eqref{equation:tau-automorphism}. Then the centralizer of the element~$e$:
    \[
    C_{(\mathcal{S}, *, \n)}(e) = \big\{ x \in \mathcal{S} \mid e*x=x*e \big\},
    \]
    coincides with the subalgebra of fixed points of the automorphism~$\tau$:
    \[
    \Fix(\tau) = \big\{ x \in \mathcal{S} \mid \tau(x) = x \big\}.
    \]
    \end{proposition}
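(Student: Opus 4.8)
The plan is to reduce the whole statement to one observation: left and right multiplication by the idempotent~$e$ are mutually inverse linear maps on~$\mathcal{S}$. First I would record that a nonzero idempotent has norm~$1$: since $e*e = e$, applying Theorem~\ref{theorem:symmetric-flexible}(2) with $x = y = e$ gives $e = (e*e)*e = \n(e)\,e$, whence $\n(e) = 1$. Specializing $x = e$ in the identity $(x*y)*x = x*(y*x) = \n(x)\,y$ of the same theorem then produces the two relations
\[
(e*y)*e = e*(y*e) = y \qquad \text{for all } y \in \mathcal{S}.
\]
Writing $L_e(y) = e*y$ and $R_e(y) = y*e$, these say exactly that $R_e \circ L_e = L_e \circ R_e = \id$, so $R_e = L_e^{-1}$; meanwhile the automorphism of Eq.~\eqref{equation:tau-automorphism} is $\tau = L_e^2$.

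With this in hand the equality is immediate. An element~$x$ lies in the centralizer $C(e)$ precisely when $L_e(x) = R_e(x)$, i.e.\ $L_e(x) = L_e^{-1}(x)$; applying the invertible map $L_e$ to both sides, this is equivalent to $L_e^2(x) = x$, which is exactly the condition $\tau(x) = x$ defining $\Fix(\tau)$. Spelled out without operators, the two inclusions read as follows. If $\tau(x) = e*(e*x) = x$, then multiplying on the right by~$e$ and using $(e*w)*e = w$ with $w = e*x$ gives $x*e = (e*(e*x))*e = e*x$, so $x \in C(e)$. Conversely, if $e*x = x*e$, then using $e*(y*e) = y$ with $y = x$ yields $\tau(x) = e*(e*x) = e*(x*e) = x$, so $x \in \Fix(\tau)$.

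I do not anticipate a genuine obstacle: the argument is forced once the two flexibility specializations are available, the only point requiring a moment's care being that a nonzero idempotent has norm exactly~$1$, which is what upgrades $L_e$ and $R_e$ from arbitrary operators to mutually inverse bijections and thereby makes the centralizer condition coincide with $\tau$-invariance.
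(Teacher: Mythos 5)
Your proposal is correct, and every step checks out: $\n(e)=1$ follows from $(e*e)*e=\n(e)\,e$ with $e*e=e\neq 0$; the specialization $(e*y)*e=e*(y*e)=y$ then makes $L_e$ and $R_e$ mutually inverse, and since $\tau=L_e^2$ by Eq.~\eqref{equation:tau-automorphism}, the condition $L_e(x)=R_e(x)=L_e^{-1}(x)$ is equivalent to $L_e^2(x)=x$, i.e.\ $\tau(x)=x$. Note that the paper itself offers no proof of this statement --- it is imported verbatim from \cite[Proposition~2.6]{Eld18} --- so there is no in-text argument to compare against; your verification is the natural one (and matches the standard argument in the cited source), with the key observation, correctly flagged by you, being that $\n(e)=1$ upgrades $L_e$ and $R_e$ to mutually inverse bijections via Theorem~\ref{theorem:symmetric-flexible}(2).
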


    \begin{proposition}[{\cite[Proposition~9.9 and Theorem~9.13]{CEKT13}}]
    Let $\chrs \F = 3$, and $\Okubo$ be an Okubo algebra over~$\F$. Then $\Okubo$ is split if and only if $\Okubo$ contains a quaternionic idempotent (see Definition~\ref{definition:idempotents}). Moreover, such an idempotent is unique:
    \[
    e = z_{1,0} + z_{2,0} + z_{0,1} + z_{0,2} + z_{1,1} + z_{2,2} + z_{1,2} + z_{2,1}.
    \]
    \end{proposition}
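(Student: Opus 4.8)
The statement splits into two parts: the equivalence ``$\Okubo$ is split $\iff$ $\Okubo$ contains a quaternionic idempotent,'' and the explicit identification of $e$ together with its uniqueness. My plan is to get the equivalence essentially for free from results already in hand. By \cite[Theorem~5.1]{Eld97} the norm of every Okubo algebra over a field with $\chrs \F = 3$ is isotropic, so the clause ``the norm on $\Okubo$ is isotropic'' occurring in conditions (2) and (3) of the three-way equivalence theorem stated just after Definition~\ref{definition:idempotents} is automatically satisfied. Discarding that redundant clause, condition (1) (``$\Okubo$ is split'') becomes equivalent to ``$\Okubo$ contains a quaternionic idempotent,'' which is exactly the first assertion.

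It then remains to verify the displayed formula and the uniqueness. Working in the basis of Table~\ref{table:okubo-algebra-isotropic} with $\alpha = \beta = 1$, I would first check that $e = z_{1,0} + z_{2,0} + z_{0,1} + z_{0,2} + z_{1,1} + z_{2,2} + z_{1,2} + z_{2,1}$ is an idempotent: since $e$ is the sum of all eight basis vectors, $e*e$ is the sum of all entries of the multiplication table with $\alpha=\beta=1$, and collecting the coefficient of each basis vector yields $e*e = e$ (the routine of Appendix~\ref{section:appexdix-program} can confirm this). To see that $e$ is quaternionic in the sense of Definition~\ref{definition:idempotents}, I would apply Proposition~\ref{proposition:reverse-Petersson}: the map $\tau(x) = e*(e*x) = \scpr{e}{x}e - x*e$ is an automorphism with $\tau^3 = \id$, and by \cite[Proposition~2.6]{Eld18} its fixed-point subalgebra coincides with the centralizer $C(e)$. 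Computing $\Fix(\tau)$ from the table and checking that it is a four-dimensional subalgebra on which $\n$ restricts to the norm of a para-quaternion algebra exhibits the required para-quaternion subalgebra inside $C(e)$.

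The main obstacle is uniqueness, since the computation above produces one quaternionic idempotent but does not rule out others, and a direct solution of $x*x = x$ subject to the quaternionic constraint over an arbitrary field of characteristic~$3$ is unappealing. Instead I would argue structurally through the Petersson description. By Proposition~\ref{proposition:reverse-Petersson} each nonzero idempotent $f$ makes $\Okubo$ a Hurwitz algebra with unit $f$ and realizes $\Okubo$ as the Petersson algebra of the order-$3$ automorphism $\tau_f$, so the type of $f$ is read off from $\Fix(\tau_f) = C(f)$; a quaternionic $f$ is exactly one for which this fixed subalgebra is a four-dimensional para-quaternion algebra. Uniqueness thus reduces to showing that exactly one idempotent realizes such a four-dimensional fixed subalgebra, the remaining (quadratic and singular) idempotents having strictly smaller or degenerate fixed subalgebras. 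This is precisely the content of the relationship between the quaternionic idempotent and all other idempotents described in \cite[Proposition~8.7]{Eld18}, from which, together with \cite[Theorem~9.13]{CEKT13}, the uniqueness and hence the displayed formula for $e$ follow.
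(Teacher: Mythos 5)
First, a point of comparison: the paper does not prove this proposition at all --- it is imported wholesale from \cite[Proposition~9.9 and Theorem~9.13]{CEKT13}, so there is no internal proof to match. Your first paragraph is nevertheless sound: the equivalence ``split $\iff$ contains a quaternionic idempotent'' does follow from the three-way equivalence theorem stated after Definition~\ref{definition:idempotents} once one discards the isotropy clause via \cite[Theorem~5.1]{Eld97}, and your verification of idempotency is correct provided you note it uses the characteristic: summing all entries of Table~\ref{table:okubo-algebra-isotropic} with $\alpha=\beta=1$ gives $e*e = -2e$, which equals $e$ only because $\chrs \F = 3$.

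The genuine gap is in the uniqueness argument, and it is twofold. First, your intrinsic characterization --- that a quaternionic idempotent $f$ is ``exactly one for which the fixed subalgebra $\Fix(\tau_f)=C(f)$ is a four-dimensional para-quaternion algebra'' --- is false. Definition~\ref{definition:idempotents} requires only that the centralizer \emph{contain} a para-quaternion algebra, and by Theorem~\ref{theorem:tau-quaternionic} the centralizer of the quaternionic idempotent in characteristic~$3$ is the \emph{six}-dimensional space $\spn\{e_1,e_2,u_1,u_2,v_1,v_3\}$; the same dimension error infects your existence check, where you propose to verify that $\Fix(\tau)$ ``is a four-dimensional subalgebra'' --- instead you must exhibit a para-quaternion subalgebra inside the six-dimensional centralizer. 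Second, your closing step derives uniqueness from \cite[Theorem~9.13]{CEKT13}, which is precisely one of the two results the proposition itself cites, so the uniqueness is assumed rather than proved; and leaning on \cite[Proposition~8.7]{Eld18} is circular within the paper's logical order, because reading Proposition~\ref{proposition:idempotents-relation} as ``every idempotent $e+x$ with $x \neq 0$ is singular or quadratic'' (as in Corollary~\ref{corollary:two-types-char-three}) already presupposes that no $e+x$ with $x\neq 0$ is quaternionic --- i.e., exactly the uniqueness you are trying to establish. What would be needed is an independent argument (as in \cite{CEKT13}) that the centralizer of any idempotent other than $e$ contains no para-quaternion subalgebra; your proposal gestures at ``strictly smaller or degenerate fixed subalgebras'' but supplies no such argument.
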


    In \cite[Theorem~6.3]{Eld18} all possible cases of Petersson algebras associated to the Zorn vector-matrix algebra over a field of characteristic~$3$ are described in detail.
    We will only cite the first item of this statement.

    \begin{theorem}[{\cite[Theorem~6.3(i)]{Eld18}}] \label{theorem:tau-quaternionic}
    Let $(\mathcal{C},\cdot,\n)$ be an eight-dimensional Hurwitz algebra over a field~$\F$, $\chrs \F = 3$, and $\tau \in \Aut(\mathcal{C},\cdot,\n)$ be an order~$3$ automorphism. Then the norm $\n(\cdot)$ is isotropic, that is, $\mathcal{C}$ is isomorphic to the Zorn vector-matrix algebra over~$\F$.

    If, moreover, $(\tau - \id)^2 = 0$, then the Petersson algebra $\mathcal{C}_{\tau}$ is an Okubo algebra, the unity~$1$ of the algebra $(\mathcal{C},\cdot,\n)$ is its unique quaternionic idempotent~$e$, and there exists a canonical basis of~$\mathcal{C}$ such that
    \begin{equation} \label{equation:tau-canonical}
    \begin{aligned}
        \tau(e_1) &= e_1, \qquad& \tau(u_i) &= u_i, \quad i=1,2, \qquad&
        \tau(u_3) &= u_3 + u_2,\\
        \tau(e_2) &= e_2, \qquad& \tau(v_i) &= v_i, \quad i=1,3, \qquad&
        \tau(v_2) &= v_2 - v_3.
    \end{aligned}    
    \end{equation}
    In this case
    \[
    C_{(\mathcal{C}, *, \n)}(e) = \Fix(\tau) = \spn \{ e_1,e_2,u_1,u_2,v_1,v_3\}.
    \]
    \end{theorem}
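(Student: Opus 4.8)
The plan is to establish the three assertions of the statement in turn, leaning on the Petersson-algebra machinery recalled above and keeping the Zorn multiplication of Table~\ref{table:zorn-algebra} in hand.

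First I would prove that $\n$ is isotropic by an elementary argument. Since $\chrs \F = 3$ and $\tau$ commutes with $\id$, the Frobenius identity gives $(\tau - \id)^3 = \tau^3 - \id = 0$, so $N = \tau - \id$ is nilpotent; as $\tau \neq \id$ we have $N \neq 0$. Choosing $k \geq 1$ maximal with $N^k \neq 0$ and $x$ with $N^k x \neq 0$, the vector $v = N^k x$ is nonzero, lies in $\Image N$, and satisfies $Nv = 0$, i.e.\ $\tau(v) = v$; moreover $\tau(w) = w + v$ for $w = N^{k-1}x$. Because $\tau$ preserves the bilinear form, $\scpr{\tau w}{\tau v} = \scpr{w}{v}$, which upon substituting $\tau w = w + v$ and $\tau v = v$ collapses to $\scpr{v}{v} = 0$; since $\scpr{v}{v} = 2\n(v)$ and $\chrs \F \neq 2$, we get $\n(v) = 0$ with $v \neq 0$. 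Hence $\n$ is isotropic, and by uniqueness of the eight-dimensional split Hurwitz algebra, $\mathcal{C}$ is the Zorn vector-matrix algebra.

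Assume now $(\tau - \id)^2 = 0$. By the theory of Petersson algebras, $(\mathcal{C}, *, \n)$ with $*$ as in Eq.~\eqref{equation:petersson} is a symmetric composition algebra; being eight-dimensional with $\tau$ of order exactly~$3$, it is an Okubo algebra, and it has isotropic norm since its norm is the isotropic $\n$. To see that $1$ is an idempotent of $\mathcal{C}_\tau$, note that $\overline{1} = 1$ and $\tau(1) = 1$, so $1 * 1 = \tau(\overline{1}) \cdot \tau^2(\overline{1}) = 1$. By Proposition~\ref{proposition:reverse-Petersson}, $C_{(\mathcal{C},*,\n)}(1) = \Fix(\tau)$, and to show that $1$ is quaternionic (Definition~\ref{definition:idempotents}) I would exhibit a para-quaternion subalgebra inside this centralizer: the span $\spn\{e_1, e_2, u_1, v_1\}$ is closed under the Zorn product~$\cdot$ (it is a copy of $M_2(\F)$) and, since all four generators lie in $\Fix(\tau)$, it is pointwise $\tau$-fixed, so on it $x * y = \overline{x} \cdot \overline{y}$ and it becomes a para-quaternion subalgebra of $\mathcal{C}_\tau$. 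Thus $e = 1$ is a quaternionic idempotent, and its uniqueness follows from the cited characterization of the split Okubo algebra in characteristic~$3$.

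It remains to produce the canonical basis and read off the centralizer. Writing $N = \tau - \id$ with $N^2 = 0$, I would first show that for an automorphism the rank of $N$ is forced to be~$2$ (rank~$1$ is incompatible with $N(xy) = (Nx)y + x(Ny) + (Nx)(Ny)$, and the higher ranks are excluded by identifying $\tau$ with the minimal nontrivial unipotent orbit of $\Aut(\mathcal{C}, \cdot, \n)$), so that $\Image N \subseteq \Fix(\tau)$ is a plane and $\dim \Fix(\tau) = 6$. Then I would align a canonical basis with this Jordan structure: use the $\tau$-fixed quaternion subalgebra above to fix $e_1, e_2, u_1, v_1$, and choose the remaining $u_2, u_3, v_2, v_3$ so that $N$ realizes $u_3 \mapsto u_2$ and $v_2 \mapsto -v_3$ while annihilating the rest, verifying afterwards that all relations of Table~\ref{table:zorn-algebra} hold. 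The main obstacle is precisely this normal-form step: one must prove that every order-$3$ automorphism with $(\tau - \id)^2 = 0$ is conjugate within $\Aut(\mathcal{C}, \cdot, \n)$ to the one displayed in Eq.~\eqref{equation:tau-canonical}, i.e.\ that these automorphisms form a single orbit whose Jordan type matches the prescribed action. Once the basis is in place, the final equalities $C_{(\mathcal{C},*,\n)}(e) = \Fix(\tau) = \spn\{e_1, e_2, u_1, u_2, v_1, v_3\}$ are immediate: the first is Proposition~\ref{proposition:reverse-Petersson}, and the second is $\Ker(\tau - \id)$ read directly off Eq.~\eqref{equation:tau-canonical}.
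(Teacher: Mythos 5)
You should first note that the paper does not prove this statement at all: it is quoted verbatim from \cite[Theorem~6.3(i)]{Eld18} as background, so there is no internal proof to compare against, and your attempt has to stand on its own. Your first paragraph does: in characteristic~$3$ one indeed has $(\tau-\id)^3=\tau^3-\id=0$, and your isometry computation $\scpr{w+v}{v}=\scpr{w}{v}$ forcing $\scpr{v}{v}=2\n(v)=0$ for the nonzero fixed vector $v=N^k x$ is a correct, elementary derivation of isotropy, after which splitness and the identification with the Zorn algebra are standard. That part is sound.

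The second, main part of the statement, however, is not proved but only planned, with three concrete gaps. First, the claim that $\mathcal{C}_\tau$ ``being eight-dimensional with $\tau$ of order exactly~$3$, is an Okubo algebra'' is unjustified: Petersson's construction yields a symmetric composition algebra, and in dimension~$8$ the classification (\cite[Theorem~2.9]{Eld18}) leaves two possibilities --- Okubo or a form of a para-Hurwitz algebra --- so you must actually rule out the para-Hurwitz case (e.g.\ by showing $\mathcal{C}_\tau$ admits no para-unit when $\tau\neq\id$); this exclusion is part of the content of the theorem, not a consequence of it. Second, your argument is circular in its ordering: you exhibit the para-quaternion subalgebra $\spn\{e_1,e_2,u_1,v_1\}\subseteq\Fix(\tau)$ using the canonical basis of Eq.~\eqref{equation:tau-canonical}, whose existence you only address afterwards, and then in the normal-form step you propose to ``use the $\tau$-fixed quaternion subalgebra above'' to pin down $e_1,e_2,u_1,v_1$ --- each half leans on the other. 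This is repairable (one could first show directly that $\Fix(\tau)$ is six-dimensional with radical $\Image(\tau-\id)$ of dimension~$2$ and contains a quaternion subalgebra complementary to the radical), but you have not done it. Third, and decisively, the normal-form assertion --- that $\rnk(\tau-\id)=2$ and that every order-$3$ automorphism with $(\tau-\id)^2=0$ is conjugate in $\Aut(\mathcal{C},\cdot,\n)$ to the automorphism displayed in Eq.~\eqref{equation:tau-canonical} --- is exactly what you flag as ``the main obstacle'' and then leave unproved; your parenthetical reasons (rank~$1$ incompatible with $N(xy)=(Nx)y+x(Ny)+(Nx)(Ny)$, higher ranks excluded by ``the minimal nontrivial unipotent orbit'') are assertions that presuppose the classification of unipotent classes of $G_2$ in characteristic~$3$, which is essentially the theorem itself. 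Since the existence of the canonical basis is what everything downstream (the description of $C_{(\mathcal{C},*,\n)}(e)=\Fix(\tau)$, and the paper's Lemma~\ref{lemma:contained-in-centralizer}) rests on, the proposal as written establishes only the first sentence of the statement.
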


    \begin{proposition}[{\cite[Proposition~24]{Eld15}, \cite[Theorem~8.5]{Eld18}}] \label{proposition:single-orbit}
    Let $\chrs \F = 3$, and $\Okubo$ be the split Okubo algebra over~$\F$. Then all singular idempotents in~$\Okubo$ belong to the same orbit under the action of the automorphism group. If $\F$ is algebraically closed, then all quadratic idempotents in~$\Okubo$ also belong to the same orbit.
    \end{proposition}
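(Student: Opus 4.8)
My plan is to translate the orbit problem for idempotents into a conjugacy problem for order-$3$ automorphisms of the Zorn vector-matrix algebra, and then feed it into the classification of such automorphisms in characteristic~$3$. First, for any nonzero idempotent $f \in \Okubo$, I would invoke Proposition~\ref{proposition:reverse-Petersson} to produce the order-$3$ automorphism $\tau_f \colon x \mapsto f*(f*x)$ of $\Okubo$, the Hurwitz multiplication $x \cdot_f y = (f*x)*(y*f)$ with unity~$f$, and the recovery of $\Okubo$ as the Petersson algebra $(\Okubo, \cdot_f)_{\tau_f}$ through Eq.~\eqref{equation:petersson}. Since $\chrs \F = 3$, Theorem~\ref{theorem:tau-quaternionic} guarantees that every Hurwitz algebra $(\Okubo, \cdot_f)$ is isomorphic to the split Zorn vector-matrix algebra $\mathcal{C}$. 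Fixing an isomorphism $\psi_f \colon (\Okubo, \cdot_f) \to \mathcal{C}$, I obtain $\widehat{\tau}_f = \psi_f \tau_f \psi_f^{-1} \in \Aut(\mathcal{C})$, whose conjugacy class in $\Aut(\mathcal{C})$ (the split group of type $G_2$ over~$\F$) is independent of the choice of~$\psi_f$.

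The key reduction I would establish is the equivalence: two idempotents $f, f'$ lie in the same orbit of $\Aut(\Okubo)$ if and only if $\widehat{\tau}_f$ and $\widehat{\tau}_{f'}$ are conjugate in $\Aut(\mathcal{C})$. For the forward direction, an automorphism $\phi \in \Aut(\Okubo)$ with $\phi(f) = f'$ sends unity to unity, hence is a Hurwitz isomorphism $(\Okubo, \cdot_f) \to (\Okubo, \cdot_{f'})$ intertwining $\tau_f$ and $\tau_{f'}$, so $\psi_{f'} \phi \psi_f^{-1}$ conjugates $\widehat{\tau}_f$ to $\widehat{\tau}_{f'}$. Conversely, given $g \in \Aut(\mathcal{C})$ with $g \widehat{\tau}_f g^{-1} = \widehat{\tau}_{f'}$, the map $\phi = \psi_{f'}^{-1} g \psi_f$ is a Hurwitz isomorphism sending $f \mapsto f'$ and intertwining the two automorphisms; because the product~$*$ is reconstructed from $\cdot_f$ and $\tau_f$ via Eq.~\eqref{equation:petersson}, the map $\phi$ preserves~$*$, so $\phi \in \Aut(\Okubo)$ and $\phi(f) = f'$. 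Moreover, the type of~$f$ in the sense of Definition~\ref{definition:idempotents} is intrinsic to the pair $(C_{\Okubo}(f), *) = (\Fix(\tau_f), *)$, so it is a conjugacy invariant of $\widehat{\tau}_f$; thus idempotents of different types automatically lie in different orbits, and it remains only to show that each admissible type realizes a single conjugacy class.

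This is where the classification of order-$3$ automorphisms enters. As $\chrs \F = 3$ and $\widehat{\tau}_f^{\,3} = \id$, the operator $N_f = \widehat{\tau}_f - \id$ is nilpotent with $N_f^{3} = 0$, and its $\Aut(\mathcal{C})$-conjugacy class is governed by \cite[Theorem~6.3]{Eld18}, whose relevant items correspond exactly to the quaternionic, quadratic, and singular idempotents of the split Okubo algebra. The quaternionic case is the normalized situation $(\widehat{\tau}_f - \id)^2 = 0$, $\widehat{\tau}_f \neq \id$, of Theorem~\ref{theorem:tau-quaternionic}, a single class recovering the uniqueness of the quaternionic idempotent. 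For the singular idempotents I would show that the associated automorphisms form the most degenerate nonzero nilpotent type, which one expects to be \emph{rationally rigid}, i.e.\ a single $\Aut(\mathcal{C})$-class over every field, yielding transitivity over an arbitrary~$\F$. For the quadratic idempotents the associated class carries an extra invariant coming from a two-dimensional composition subalgebra of the centralizer, which may fail to split over a general field; passing to $\overline{\F}$ makes every such subalgebra split and kills this invariant, so by the finiteness of unipotent classes in $\Aut(\mathcal{C}_{\overline{\F}})$ all quadratic idempotents collapse to one orbit.

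The main obstacle is the explicit determination of the $\Aut(\mathcal{C})$-conjugacy classes of order-$3$ automorphisms of $\mathcal{C}$ together with the matching of each Jordan type of $N_f$ to an idempotent type --- essentially the computation behind \cite[Theorem~6.3]{Eld18}, carried out with the Zorn multiplication of Table~\ref{table:zorn-algebra}. The most delicate point is proving genuine rational transitivity in the singular case over an arbitrary field, that is, verifying that the relevant nilpotent class admits no field-theoretic obstruction, in sharp contrast with the quadratic case, where exactly such an obstruction is what forces the restriction to algebraically closed~$\F$.
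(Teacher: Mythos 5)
First, a point of reference: the paper does not prove this proposition at all --- it is imported verbatim from \cite[Proposition~24]{Eld15} and \cite[Theorem~8.5]{Eld18}, so there is no in-paper proof to match yours against. Judged on its own merits, your reduction is sound and is in fact the right framework, consistent with the Petersson-algebra machinery the paper itself quotes (Proposition~\ref{proposition:reverse-Petersson}, Theorem~\ref{theorem:tau-quaternionic}): the equivalence ``$f$ and $f'$ lie in one $\Aut(\Okubo)$-orbit iff $\widehat{\tau}_f$ and $\widehat{\tau}_{f'}$ are conjugate in $\Aut(\mathcal{C})$'' is correctly argued in both directions, since Eq.~\eqref{equation:petersson} lets you transport the $*$-product along any Hurwitz isomorphism intertwining the automorphisms, and the class of $\widehat{\tau}_f$ is well defined independently of $\psi_f$. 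One small unstated point you should patch: Theorem~\ref{theorem:tau-quaternionic} and \cite[Theorem~6.3]{Eld18} require $\tau_f$ of order exactly~$3$, so you must rule out $\tau_f = \id$; this holds because $\tau_f = \id$ would make $x*y = \overline{x}\cdot\overline{y}$ a para-octonion product, and a para-Hurwitz algebra is not an Okubo algebra.

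The genuine gap is that the entire content of the proposition --- transitivity within each type --- is asserted rather than proved. For the singular case you write that the associated nilpotent class is one that ``one expects to be rationally rigid,'' and for the quadratic case over $\overline{\F}$ you appeal to ``the finiteness of unipotent classes in $\Aut(\mathcal{C}_{\overline{\F}})$.'' Neither is an argument: finiteness of unipotent classes in $G_2$ gives finitely many orbits, not one, and rational rigidity of the singular class over an arbitrary field is exactly the delicate statement that must be verified (it is what fails for the quadratic type, where a genuine field-theoretic invariant survives and forces the hypothesis $\F = \overline{\F}$). Equally missing is the matching of Jordan types of $\widehat{\tau}_f - \id$ with the three idempotent types of Definition~\ref{definition:idempotents}; you correctly identify the quaternionic type with $(\widehat{\tau}_f - \id)^2 = 0$ via Theorem~\ref{theorem:tau-quaternionic}, but the quadratic/singular dichotomy within $(\widehat{\tau}_f - \id)^2 \neq 0$, and the count of rational classes in each, is precisely the computation of \cite[Theorem~6.3 and Section~8]{Eld18} (or, alternatively, the direct approach of \cite[Theorem~8.5]{Eld18} via Proposition~\ref{proposition:idempotents-relation}, writing every idempotent as $e + x$ with $x \in C_{\Okubo}(e)$, $x*x=0$, and acting by the stabilizer of the unique quaternionic idempotent~$e$). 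You flag this yourself as ``the main obstacle,'' which is an honest assessment: what you have is a correct reduction plus a plan, not a proof.
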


    \begin{proposition}[{\cite[Proposition~8.7]{Eld18}}] \label{proposition:idempotents-relation}
    Let $\chrs \F = 3$, $\Okubo$ be the split Okubo algebra over~$\F$, and $e$ be its quaternionic idempotent. Then
    \begin{enumerate}[{\rm (1)}]
        \item the set of singular idempotents in~$\Okubo$ coincides with
        \[
        \big\{ e + x \mid 0 \neq x \in C_{\Okubo}(e) \cap C_{\Okubo}(e)^{\perp} \big\};
        \]
        \item the set of all nonzero idempotents in~$\Okubo$ coincides with
        \[
        \big\{ e + x \mid x \in C_{\Okubo}(e), \: x*x = 0 \big\}.
        \]
    \end{enumerate}
    \end{proposition}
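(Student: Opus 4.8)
The plan is to work throughout with the Petersson description of $\Okubo$ attached to its quaternionic idempotent~$e$. Applying Proposition~\ref{proposition:reverse-Petersson} to~$e$ produces the order-$3$ automorphism $\tau(x) = e*(e*x) = \n(e,x)e - x*e$ together with the Hurwitz product $x \cdot y = (e*x)*(y*e)$, under which $\Okubo$ becomes an eight-dimensional split Hurwitz algebra with unity~$e$ and the same norm; note $\n(e)=1$ by Theorem~\ref{theorem:symmetric-flexible}(2), whence $\n(e,e)=2\n(e)=-1$ in characteristic~$3$. By \cite[Proposition~2.6]{Eld18} we have $C_{\Okubo}(e)=\Fix(\tau)$, and by Theorem~\ref{theorem:tau-quaternionic} we have $(\tau-\id)^2=0$, an explicit canonical basis with $\tau$ acting as in Eq.~\eqref{equation:tau-canonical}, and $C_{\Okubo}(e)=\spn\{e_1,e_2,u_1,u_2,v_1,v_3\}$. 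The first step is to record, for $x\in C_{\Okubo}(e)$, the identity $e*x=x*e=\n(e,x)e-x$ (immediate from $\tau(x)=x$ and Eq.~\eqref{equation:tau-automorphism}) and, since $\tau$ also fixes the Hurwitz conjugate $\bar x=\n(e,x)e-x$, the reduction $x*y=\tau(\bar x)\cdot\tau^2(\bar y)=\bar x\cdot\bar y$ of the Okubo product to the Hurwitz product on the centralizer, cf.\ Eq.~\eqref{equation:petersson}. Combining this with the degree-two Hurwitz relation $z\cdot z=\n(e,z)z-\n(z)e$ yields the crucial formula
\[
x*x=\bigl(\n(e,x)^2-\n(x)\bigr)e-\n(e,x)\,x,\qquad x\in C_{\Okubo}(e),
\]
from which $x*x=0\iff\n(e,x)=\n(x)=0$ for such~$x$.

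For part~(2) I would first prove the inclusion of the right-hand set into the idempotents. A direct expansion using $e*e=e$, the identity $e*x=x*e=\n(e,x)e-x$, and $\chrs\F=3$ gives $f*f=(1-\n(e,x))e+x+x*x$ for $f=e+x$ with $x\in C_{\Okubo}(e)$, so $f$ is idempotent iff $x*x=\n(e,x)e$; substituting the crucial formula shows that for $f\neq 0$ this forces $\n(e,x)=0$ and hence $x*x=0$, while conversely any $x\in C_{\Okubo}(e)$ with $x*x=0$ makes $e+x$ a nonzero idempotent. The reverse inclusion --- that \emph{every} nonzero idempotent $f$ has this form --- is where commutativity with~$e$ is needed, and it is settled cleanly: the map $\tau_f(z)=f*(f*z)$ is an automorphism of~$\Okubo$ by Proposition~\ref{proposition:reverse-Petersson}, and since $e$ is the \emph{unique} quaternionic idempotent it is fixed by every automorphism; thus $\tau_f(e)=e$, i.e.\ $e\in\Fix(\tau_f)=C_{\Okubo}(f)$, which is exactly $e*f=f*e$. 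Hence $x:=f-e\in C_{\Okubo}(e)$, and the idempotent equation again forces $x*x=0$, finishing part~(2).

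For part~(1) I would compute the Gram matrix of $\n(\cdot,\cdot)$ restricted to $C_{\Okubo}(e)=\spn\{e_1,e_2,u_1,u_2,v_1,v_3\}$ from the Zorn norm of Table~\ref{table:zorn-algebra}: the pairings $\n(e_1,e_2)=1$ and $\n(u_i,v_i)=1$ show that $u_2,v_3$ (whose Hurwitz-dual partners $v_2,u_3$ lie outside the centralizer) span the radical, so
\[
C_{\Okubo}(e)\cap C_{\Okubo}(e)^{\perp}=\spn\{u_2,v_3\}=\Image(\tau-\id).
\]
Every nonzero $x$ in this radical has $\n(e,x)=\n(x)=0$, hence $x*x=0$, so $e+x$ is a nonzero idempotent by part~(2). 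It then remains to identify these with the singular idempotents, and here I would exploit that the predicate ``$f-e\in C_{\Okubo}(e)^{\perp}$'' is $\Aut(\Okubo)$-invariant: any automorphism $g$ fixes~$e$, preserves~$\n$, and satisfies $g(C_{\Okubo}(e))=C_{\Okubo}(g(e))=C_{\Okubo}(e)$. Since the singular idempotents form a single orbit (Proposition~\ref{proposition:single-orbit}), it suffices to check one representative of each type --- exhibiting a singular idempotent $e+x_0$ with $x_0\in\spn\{u_2,v_3\}$, and a quadratic idempotent $e+x_1$ with $x_1\notin C_{\Okubo}(e)^{\perp}$.

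The main obstacle is precisely this last matching in part~(1): deciding the type (quaternionic, quadratic, or singular, in the sense of Definition~\ref{definition:idempotents}) of a representative $e+x$ requires locating para-quaternion and para-quadratic subalgebras inside the centralizer $C_{\Okubo}(e+x)=\Fix(\tau_{e+x})$, which needs an explicit handle on $\tau_{e+x}$ and on the norm restricted to the relevant subspaces. A secondary difficulty is that Proposition~\ref{proposition:single-orbit} supplies a single orbit of quadratic idempotents only over an algebraically closed field, so over an arbitrary~$\F$ one must argue directly that $x\notin C_{\Okubo}(e)^{\perp}$ yields a para-quadratic subalgebra in $C_{\Okubo}(e+x)$ (forcing the idempotent to be quadratic rather than singular), instead of reducing to a representative. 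Part~(2), by contrast, becomes essentially formal once the commutativity observation $\tau_f(e)=e$ is in place.
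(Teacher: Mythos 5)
This proposition is one the paper does not prove at all: it is imported verbatim from \cite[Proposition~8.7]{Eld18}, so there is no internal proof to compare your attempt against; I assess it on its own merits. Your part~(2) is complete and correct. The reduction $x*y=\overline{x}\cdot\overline{y}$ on $C_{\Okubo}(e)=\Fix(\tau)$, the resulting formula $x*x=\bigl(\n(e,x)^2-\n(x)\bigr)e-\n(e,x)\,x$, and the computation $f*f=(1-\n(e,x))e+x+x*x$ in characteristic~$3$ all check out (including the degenerate case $x\in\F e$, where $f\neq 0$ forces $x=0$). Your key observation for the reverse inclusion --- that $\tau_f(z)=f*(f*z)$ is an automorphism by Proposition~\ref{proposition:reverse-Petersson}, hence fixes the \emph{unique} quaternionic idempotent~$e$, so $e\in\Fix(\tau_f)=C_{\Okubo}(f)$ and $x=f-e\in C_{\Okubo}(e)$ --- is clean and correct; you should only record explicitly that any nonzero idempotent has $\n(f)=1$ (from $(f*f)*f=\n(f)f$), which is what legitimizes applying Proposition~\ref{proposition:reverse-Petersson} to~$f$.

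Part~(1), however, contains a genuine gap, which you yourself flag. Your Gram-matrix computation correctly gives $C_{\Okubo}(e)\cap C_{\Okubo}(e)^{\perp}=\spn\{u_2,v_3\}=\Image(\tau-\id)$, and the $\Aut(\Okubo)$-invariance of the predicate ``$f-e$ lies in this radical,'' combined with the single orbit of singular idempotents (Proposition~\ref{proposition:single-orbit}, valid over any~$\F$), correctly reduces one inclusion to exhibiting \emph{one} singular idempotent $e+x_0$ with $x_0$ in the radical. But that representative verification --- showing $C_{\Okubo}(e+x_0)$ contains no para-quadratic subalgebra in the sense of Definition~\ref{definition:idempotents} --- is precisely the computational core of Elduque's Proposition~8.7 and is absent from your proposal; without it neither inclusion of part~(1) is established. (The paper itself, in the proof of Corollary~\ref{corollary:two-types-char-three}, borrows such a representative from the literature: $e+x$ with $x=(z_{0,2}+z_{1,2}+z_{2,2})-(z_{0,1}+z_{2,1}+z_{1,1})$ is singular by the example after Definition~22 in~\cite{Eld15} and \cite[Theorem~3.12]{Eld09}; you could import the same fact.) For the converse inclusion, excluding the quadratic type for $0\neq x$ in the radical over a general field, the descent route is the cheapest repair: quadratic idempotents of~$\Okubo$ remain quadratic in $\Okubo\otimes_{\F}\overline{\F}$ (as the paper uses in Corollary~\ref{corollary:two-types-char-three}), quaternionic is excluded by uniqueness of~$e$, so singularity over~$\overline{\F}$ --- obtained from the closed-field orbit argument plus a quadratic representative with $f-e\notin C_{\Okubo}(e)^{\perp}$ --- descends to~$\F$. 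As written, part~(1) is a sound plan with the decisive representative-type checks missing, not a proof.
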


    \begin{lemma} \label{lemma:contained-in-centralizer}
    Let $\chrs \F = 3$, $\Okubo$ be the split Okubo algebra over~$\F$, and $e$ be its quaternionic idempotent. If $x*x = 0$ for some $x \in \Okubo$, then $x \in C_{\Okubo}(e)$.
    \end{lemma}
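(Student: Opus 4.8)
The plan is to realize the split Okubo algebra as a Petersson algebra and reduce the hypothesis $x*x=0$ to an explicit condition in the Zorn vector-matrix algebra. By Theorem~\ref{theorem:tau-quaternionic} we may identify $\Okubo$ with $\mathcal{C}_{\tau}$, where $(\mathcal{C},\cdot,\n)$ is the Zorn vector-matrix algebra, $\tau$ is the order-$3$ automorphism given by Eq.~\eqref{equation:tau-canonical} with $(\tau-\id)^2=0$, the quaternionic idempotent is the unity $e=1=e_1+e_2$, and $C_{\Okubo}(e)=\Fix(\tau)=\spn\{e_1,e_2,u_1,u_2,v_1,v_3\}$. Since $\tau$ preserves the norm and fixes~$1$, it commutes with the standard conjugation $y\mapsto\overline{y}=\n(1,y)1-y$, so $\Fix(\tau)$ is conjugation-stable; hence, writing $w:=\overline{x}$, it is equivalent to prove that $w\in\Fix(\tau)$.

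By Eq.~\eqref{equation:petersson} we have $x*x=\tau(\overline{x})\cdot\tau^2(\overline{x})=\tau(w)\cdot\tau^2(w)=\tau\bigl(w\cdot\tau(w)\bigr)$, so the hypothesis $x*x=0$ is equivalent to the relation $w\cdot\tau(w)=0$ in the Hurwitz algebra. Writing $\tau=\id+N$ with $N=\tau-\id$, one has $N^2=0$, and from Eq.~\eqref{equation:tau-canonical} the image $\Image(N)=\spn\{u_2,v_3\}$; thus $n:=N(w)=\tau(w)-w=\alpha u_2-\beta v_3$ for suitable scalars $\alpha,\beta$, and the goal $w\in\Fix(\tau)$ is precisely $n=0$.

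I would then use the Hurwitz quadratic identity $w\cdot w=\n(1,w)\,w-\n(w)\,1$ to rewrite $w\cdot\tau(w)=w\cdot w+w\cdot n=0$ as
\[
w\cdot n=\n(w)\,1-\n(1,w)\,w.
\]
Comparing the $u_3$- and $v_2$-components of both sides is decisive in the generic case: the left-hand side $w\cdot n$ has zero $u_3$- and $v_2$-components, while the right-hand side contributes $-\n(1,w)\,\alpha$ and $-\n(1,w)\,\beta$ respectively. This gives $\n(1,w)\,\alpha=\n(1,w)\,\beta=0$, so whenever $\n(1,w)\neq0$ we immediately conclude $\alpha=\beta=0$, that is, $n=0$.

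The remaining case $\n(1,w)=0$ is where the real work lies, and it is settled by a direct computation with Table~\ref{table:zorn-algebra}. Expanding $w\cdot u_2$ and $w\cdot v_3$ in the canonical basis shows that, independently of the remaining coordinates of~$w$, the $u_1$-component of $w\cdot n$ equals $-\beta^2$ and its $v_1$-component equals $-\alpha^2$; since the right-hand side $\n(w)\,1-\n(1,w)\,w$ has zero $u_1$- and $v_1$-components once $\n(1,w)=0$, we obtain $\alpha^2=\beta^2=0$, hence again $n=0$. In either case $w\in\Fix(\tau)$, and therefore $x\in C_{\Okubo}(e)$. I expect this last case to be the main obstacle: the nilpotency $N^2=0$ together with the norm identities only reduces it to $w\cdot n=0$, and one genuinely needs the explicit products of Table~\ref{table:zorn-algebra} to see that the offending $u_1$- and $v_1$-components are the perfect squares $-\beta^2$ and $-\alpha^2$, which vanish exactly when $\alpha=\beta=0$.
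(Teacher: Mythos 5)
Your proof is correct and takes essentially the same route as the paper's: both reduce $x*x=0$ via the Petersson construction to $\overline{x}\cdot\tau(\overline{x})=0$ in the Zorn algebra, isolate the nilpotent increment $\tau(\overline{x})-\overline{x}\in\spn\{u_2,v_3\}$, and force it to vanish by comparing first the $u_3$- and $v_2$-components (settling the case $\n(1,\overline{x})\neq 0$, i.e., $\tr(x)\neq 0$) and then the $u_1$- and $v_1$-components, whose values $-\beta^2$ and $-\alpha^2$ settle the trace-zero case. Your packaging via $N=\tau-\id$, $N^2=0$ and the quadratic identity $w\cdot w=\n(1,w)\,w-\n(w)\,1$ is only a cosmetic variation of the paper's direct use of $\overline{x}=\tr(x)\,e-x$ and Table~\ref{table:zorn-algebra}.
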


    \begin{proof}
    Let $\tau$ be the automorphism defined in Eq.~\eqref{equation:tau-automorphism} which corresponds to the idempotent~$e$. Consider a new multiplication on~$\Okubo$ given by Eq.~\eqref{equation:Hurwitz-multiplication}. Then, by Proposition~\ref{proposition:reverse-Petersson}, the Okubo algebra $(\Okubo, *, \n)$ is the Petersson algebra associated to the Hurwitz algebra $(\Okubo, \cdot, \n)$ with the unity~$e$ and its automorphism~$\tau$. It follows from~\cite[Theorem~6.3]{Eld18} that $\tau$ satisfies the conditions of Theorem~\ref{theorem:tau-quaternionic}. Then $(\Okubo, \cdot, \n)$ is isomorphic to the Zorn vector-matrix algebra over~$\F$, and there exists a canonical basis such that $\tau$ acts on basis vectors according to Eqs.~\eqref{equation:tau-canonical}. It remains to show that if $x*x = 0$ for some $x \in \Okubo$, then $x \in C_{\Okubo}(e) = \spn \{ e_1,e_2,u_1,u_2,v_1,v_3\}$.

    By Eq.~\eqref{equation:petersson}, 
    \[
    0 = x*x = \tau(\overline{x}) \cdot \tau^2(\overline{x}) = \tau(\overline{x} \cdot \tau(\overline{x})),
    \]
    hence $\overline{x} \cdot \tau(\overline{x}) = 0$. Let 
    \[
    \overline{x} = \sum_{i=1}^2 \lambda_i e_i + \sum_{i=1}^3 \mu_i u_i + \sum_{i=1}^3 \nu_i v_i.
    \]
    Then $\tau(\overline{x}) = \overline{x} + \mu_3 u_2 - \nu_2 v_3$. Since $\overline{x} = \tr(x) e - x$, we have
    \[
    0 = \overline{x} \cdot \tau(\overline{x}) = \overline{x} \cdot (\tr(x) e - x + \mu_3 u_2 - \nu_2 v_3) = \tr(x) \overline{x} - \n(x)e + \overline{x} \cdot (\mu_3 u_2 - \nu_2 v_3).
    \]
    By using multiplication table~\ref{table:zorn-algebra} and the equality $e = e_1 + e_2$, we obtain that the coefficients at $v_2$ and $u_3$ in the previous equation are equal to $\tr(x) \nu_2$ and $\tr(x) \mu_3$, respectively. Therefore, we have either $\nu_2 = \mu_3 = 0$ or $\tr(x) = 0$. In the second case we consider the coefficients at $v_1$ and $u_1$ which are equal to $-\mu_3^2$ and $-\nu_2^2$, respectively, and then $\nu_2 = \mu_3 = 0$ again. Consequently, $x \in C_{\Okubo}(e) = \spn \{ e_1,e_2,u_1,u_2,v_1,v_3\}$.
    \end{proof}

    \begin{notation}
    Let $x, y \in \Okubo$.
    We write $x \perp y$ if the elements $x$ and $y$ are orthogonal with respect to the symmetric bilinear form $\n(\cdot,\cdot)$, i.e., $\n(x,y) = 0$. 
    
    Given a subset $Y \subset \Okubo$, we also write $x \perp Y$ if $x \perp y$ for all $y \in Y$.
    \end{notation}

    \begin{corollary} \label{corollary:two-types-char-three}
    Let $\chrs \F = 3$, $\Okubo$ be the split Okubo algebra over~$\F$, and $e$ be its quaternionic idempotent. Consider an element $x \in Z(\Okubo)$ such that $x*x = 0$. Then there are two cases possible:
    \begin{enumerate}[{\rm (1)}]
        \item $x \perp C_{\Okubo}(e)$, $e + x$ is a singular idempotent, and for any $y \in O_{\Okubo}(x)$ we have $y*y = 0$;
        \item $x \not\perp C_{\Okubo}(e)$, $e + x$ is a quadratic idempotent, and all elements $y \in O_{\Okubo}(x)$ such that $y*y = 0$ are contained in a single two-dimensional subspace.
    \end{enumerate}
    All elements of the first type belong to the same orbit under the action of the automorphism group. If $\F$ is algebraically closed, then all elements of the second type also belong to the same orbit.
    \end{corollary}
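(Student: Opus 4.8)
The plan is to transport the whole problem into the Petersson picture already set up in the proof of Lemma~\ref{lemma:contained-in-centralizer} and then read off the three assertions from the linear algebra of the order-$3$ automorphism $\tau$ attached to $e$. Since $x * x = 0$, Lemma~\ref{lemma:contained-in-centralizer} places $x \in C_{\Okubo}(e) = \Fix(\tau)$, so I may work in the canonical basis of Theorem~\ref{theorem:tau-quaternionic}, where $(\Okubo, \cdot, \n)$ is the Zorn algebra with unity $e$, $C_{\Okubo}(e) = \spn\{e_1, e_2, u_1, u_2, v_1, v_3\}$, and the radical of the norm restricted to $C_{\Okubo}(e)$ equals $C_{\Okubo}(e)^{\perp} = \Image(\tau - \id) = \spn\{u_2, v_3\}$ (both facts being immediate from Eqs.~\eqref{equation:tau-canonical}). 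First I would record that $\tau$ commutes with the standard conjugation and that $x \in \Fix(\tau)$ forces $\overline{x} = -x$; indeed $0 = x * x = \overline{x} \cdot \overline{x} = \tr(x)\overline{x} - \n(x) e$ together with $x \notin \F e$ yields $\tr(x) = \n(x) = 0$ and $x * e = -x$.

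For the classification of $e + x$ I would invoke Proposition~\ref{proposition:idempotents-relation}: part~(2) makes $e + x$ a nonzero idempotent, and since $x \neq 0$ while $e$ is the \emph{unique} quaternionic idempotent, $e + x$ is singular or quadratic. By part~(1) it is singular exactly when $x \in C_{\Okubo}(e) \cap C_{\Okubo}(e)^{\perp}$, which (as $x \in C_{\Okubo}(e)$ always) means $x \perp C_{\Okubo}(e)$; otherwise it is quadratic. The two orbit statements then follow formally: any automorphism permutes idempotents preserving their type, and because $e$ is the unique quaternionic idempotent every automorphism fixes $e$, so one carrying $e + x$ to $e + x'$ must send $x$ to $x'$. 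Combined with Proposition~\ref{proposition:single-orbit} (singular idempotents form one orbit over any field, quadratic ones over an algebraically closed field) this gives exactly the claimed orbit assertions.

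The heart of the argument is the description of the square-zero elements of $O_{\Okubo}(x)$. The key reduction is that if $y \in O_{\Okubo}(x)$ satisfies $y * y = 0$, then $y \in C_{\Okubo}(e) = \Fix(\tau)$ by Lemma~\ref{lemma:contained-in-centralizer}, whence $y * y = y \cdot y = \tr(y) y - \n(y) e$; since every nonzero $y \in O_{\Okubo}(x)$ is a zero divisor, $\n(y) = 0$ and so $y * y = 0 \Leftrightarrow \tr(y) = 0$. I would then show $\tr$ vanishes identically on $O_{\Okubo}(x)$: writing a typical element as $a * x$ with $a \perp x$ and using symmetry of the norm, $\tr(a * x) = \n(e, a * x) = \n(a, x * e) = -\n(a, x) = 0$. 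Consequently the set of square-zero elements of $O_{\Okubo}(x)$ equals $O_{\Okubo}(x) \cap C_{\Okubo}(e)$ exactly, and the whole problem reduces to computing the dimension of this intersection.

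To compute it I would use $O_{\Okubo}(x) = x^{\perp} * x$ from Corollary~\ref{corollary:orthogonalizer-formula}(2): for $a \perp x$ one has $a * x = -\tau(\overline{a}) \cdot x$, and applying the $\cdot$-automorphism $\tau$ shows $a * x \in \Fix(\tau)$ iff $(\tau - \id)(\overline{a}) \cdot x = 0$, where $(\tau - \id)^2 = 0$ is used to simplify $\tau^2 - \tau = \tau - \id$. As conjugation preserves $x^{\perp}$, this says $O_{\Okubo}(x) \subseteq C_{\Okubo}(e)$ iff $(\tau - \id)(x^{\perp}) \subseteq l.\Ann_{\cdot}(x)$ in the Zorn algebra. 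A rank count gives $\dim (\tau - \id)(x^{\perp}) = 7 - \dim(x^{\perp} \cap C_{\Okubo}(e))$, which is $1$ when $x \perp C_{\Okubo}(e)$ (type~1) and $2$, hence all of $\spn\{u_2, v_3\}$, when $x \not\perp C_{\Okubo}(e)$ (type~2). In the type~1 case $x \in \spn\{u_2, v_3\}$, and the direct check that $\spn\{u_2, v_3\}$ left-annihilates $x$ gives $O_{\Okubo}(x) \subseteq C_{\Okubo}(e)$, so every $y \in O_{\Okubo}(x)$ squares to zero. In the type~2 case the containment $O_{\Okubo}(x) \subseteq C_{\Okubo}(e)$ would force $\spn\{u_2, v_3\} \subseteq l.\Ann_{\cdot}(x)$, i.e. $x \in r.\Ann_{\cdot}(u_2) \cap r.\Ann_{\cdot}(v_3) = \spn\{u_2, v_3\} = C_{\Okubo}(e)^{\perp}$, contradicting $x \not\perp C_{\Okubo}(e)$; thus $\dim (O_{\Okubo}(x) \cap C_{\Okubo}(e)) = 2$ and the square-zero elements fill a single plane. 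I expect the delicate point — and the reason a uniform argument is preferable to an orbit reduction — to be precisely this type~2 dimension count over a non-closed field, where Proposition~\ref{proposition:single-orbit} is unavailable; routing it through the nilpotent $\tau - \id$ and the Zorn-algebra annihilators is what makes the conclusion field-independent.
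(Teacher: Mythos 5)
Your proposal is correct, and for the heart of the statement it takes a genuinely different route from the paper. The frame is the same: both proofs use Lemma~\ref{lemma:contained-in-centralizer} to place $x$ in $C_{\Okubo}(e)$, Proposition~\ref{proposition:idempotents-relation} to decide whether $e+x$ is singular or quadratic, and Proposition~\ref{proposition:single-orbit} together with the uniqueness of the quaternionic idempotent (so that every automorphism fixes $e$ and the correspondence $x \mapsto e+x$ is equivariant) for the orbit claims. Where you diverge is the description of the square-zero part of $O_{\Okubo}(x)$. The paper handles this by explicit computation in the $z_{i,j}$ basis for one representative of each type (the type~(1) element inside the proof of Lemma~\ref{lemma:orthogonality-characteristic-three}, the type~(2) element in Example~\ref{example:length-two-path}), then transports along the orbit; for type~(2) over a non-closed field this forces a detour through $\Okubo_{\overline{\F}}$ and a descent argument, since Proposition~\ref{proposition:single-orbit} for quadratic idempotents is only available over $\overline{\F}$. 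You instead stay in the Petersson/Zorn picture: the reduction ``$y \in O_{\Okubo}(x)$, $y*y=0$ $\Leftrightarrow$ $y \in O_{\Okubo}(x) \cap \Fix(\tau)$'' (via $\tr \equiv 0$ on $O_{\Okubo}(x)$, which your computation $\n(e,a*x) = \n(a,x*e) = -\n(a,x) = 0$ establishes cleanly), followed by the rank count for the nilpotent $\tau-\id$ on $x^{\perp}$ and the annihilator computation $r.\Ann_{\cdot}(u_2) \cap r.\Ann_{\cdot}(v_3) = \spn\{u_2,v_3\}$ in the Zorn algebra. This buys uniformity: both cases of the corollary come out of one field-independent calculation, no base change and no orbit transport are needed for the subspace statements, and as a by-product you see that the square-zero elements of $O_{\Okubo}(x)$ always form a \emph{linear subspace}, which the paper's formulation leaves implicit.

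Two small patches. First, your concluding ``thus $\dim(O_{\Okubo}(x) \cap C_{\Okubo}(e)) = 2$'' in the type~(2) case: the contradiction argument only yields the upper bound $\dim \leq 2$; the lower bound needs Lemma~\ref{lemma:zero-square-subspace} with Corollary~\ref{corollary:field-condition} (or the observation that $x$ itself plus one more square-zero orthogonal element exist). This is harmless, since the statement of the corollary only asserts containment in a single two-dimensional subspace, which $\dim \leq 2$ already gives. Second, for $y \in \Fix(\tau)$ one has $y*y = \overline{y}\cdot\overline{y} = \tr(y)\,\overline{y} - \n(y)e$, not $y \cdot y$ as written; since conjugation is bijective the equivalence $y*y = 0 \Leftrightarrow \tr(y) = 0$ (given $\n(y)=0$) is unaffected.
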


    \begin{proof}
    It follows from Lemma~\ref{lemma:contained-in-centralizer} that $x \in C_{\Okubo}(e)$. By Proposition~\ref{proposition:idempotents-relation}, $e + x$ is a singular idempotent if $x \perp C_{\Okubo}(e)$, and a quadratic idempotent otherwise. By Proposition~\ref{proposition:single-orbit}, all singular idempotents in~$\Okubo$ belong to the same orbit under the action of the automorphism group. Since the quaternionic idempotent
    \[
    e = z_{1,0} + z_{2,0} + z_{0,1} + z_{0,2} + z_{1,1} + z_{2,2} + z_{1,2} + z_{2,1}
    \]
    is unique, it is preserved by any automorphism of~$\Okubo$. Therefore, the elements~$x$ of type~(1) indeed form an orbit under the action of the automorphism group.

    By using the fact that $\dim C_{\Okubo}(e) = 6$, one can easily obtain
    \begin{align*}
    C_{\Okubo}(e) = \spn & \{ z_{1,0} + z_{2,0}, z_{0,1} + z_{0,2}, z_{1,1} + z_{2,2}, z_{1,2} + z_{2,1}, \\
    & (z_{0, 2} + z_{1, 2} + z_{2, 2}) - (z_{0, 1} + z_{2, 1} + z_{1, 1}), \\
    & (z_{1,0} + z_{0,1} + z_{2,2}) - (z_{2,0} + z_{0,2} + z_{1,1})\}. 
    \end{align*}
    Consider $x = (z_{0, 2} + z_{1, 2} + z_{2, 2}) - (z_{0, 1} + z_{2, 1} + z_{1, 1})$. Since $x \perp C_{\Okubo}(e)$, the element $e + x$ is a singular idempotent (see also an example after Definition~22 in~\cite{Eld15} and~\cite[Theorem~3.12]{Eld09}). It is shown in Lemma~\ref{lemma:orthogonality-characteristic-three} that $y*y = 0$ for all $y \in O_{\Okubo}(x)$. Clearly, this property of the orthogonalizer is preserved under automorphism, so it holds for all elements of type~(1).
    
    We now consider elements of type~(2). Passing to the algebraic closure $\overline{\F}$ of~$\F$, we obtain the split Okubo algebra $\Okubo_{\overline{\F}} = \Okubo \otimes_{\F} \overline{\F}$ over $\overline{\F}$. The element $e \in \Okubo_{\overline{\F}}$ is again a quaternionic idempotent, and all quadratic idempotents of~$\Okubo$ are also quadratic idempotents of $\Okubo_{\overline{\F}}$. By Proposition~\ref{proposition:single-orbit}, all quadratic idempotents in~$\Okubo_{\overline{\F}}$ belong to the same orbit under the action of the automorphism group.
    Thus, over an algebraically closed field, all elements of type~(2) belong to the same orbit.

    Consider $x = z_{0,2} + z_{1,2} + z_{2,2} \in \Okubo_{\overline{\F}}$. Since $x \not\perp C_{\Okubo}(e)$, the element~$x$ is of type~(2). According to Example~\ref{example:length-two-path}, all elements $y \in O_{\Okubo_{\overline{\F}}}(x)$ such that $y*y = 0$ are contained in the two-dimensional subspace $\spn \{ z_{0,2} + z_{1,2} + z_{2,2}, z_{0,1} + z_{1,1} + z_{2,1} \}$. Therefore, for an arbitrary element~$x \in \Okubo_{\overline{\F}}$ of type~(2), all elements $y \in O_{\Okubo_{\overline{\F}}}(x)$ such that $y*y = 0$ are contained in a single two-dimensional subspace.
    
    Let now $x \in \Okubo$ be of type~(2). By Lemma~\ref{lemma:zero-square-subspace} and Corollary~\ref{corollary:field-condition}, there exists a two-dimensional subspace $W \subset O_{\Okubo}(x)$ such that $y*y = 0$ for all $y \in W$. Assume that there exists $z \in O_{\Okubo}(x) \setminus W$ which satisfies $z*z = 0$. Then $x \in \Okubo_{\overline{\F}}$ is also of type~(2), $y*y = 0$ for any $y \in W_{\overline{\F}} = W \otimes_{\F} \overline{\F} \subset O_{\Okubo_{\overline{\F}}}(x)$, and $z \in O_{\Okubo_{\overline{\F}}}(x) \setminus W_{\overline{\F}}$. This is a contradiction to the last sentence of the previous paragraph. Thus all elements $y \in O_{\Okubo}(x)$ such that $y*y = 0$ are contained in a single two-dimensional subspace.
    \end{proof}

    \begin{proposition} \label{proposition:zero-square-class}
    The elements which satisfy the conditions of Corollary~\ref{corollary:two-types-char-three}(1) are exactly the nonzero elements of the set
    \[
    \spn \{ (z_{0, 2} + z_{1, 2} + z_{2, 2}) - (z_{0, 1} + z_{2, 1} + z_{1, 1}), \; (z_{1,0} + z_{0,1} + z_{2,2}) - (z_{2,0} + z_{0,2} + z_{1,1})\}, 
    \]
    and any two of them are orthogonal to each other.
    \end{proposition}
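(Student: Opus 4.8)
Denote the two spanning vectors by $p = (z_{0,2}+z_{1,2}+z_{2,2}) - (z_{0,1}+z_{2,1}+z_{1,1})$ and $q = (z_{1,0}+z_{0,1}+z_{2,2}) - (z_{2,0}+z_{0,2}+z_{1,1})$; both occur among the six basis vectors of $C_{\Okubo}(e)$ exhibited in the proof of Corollary~\ref{corollary:two-types-char-three}, so $\spn\{p,q\} \subseteq C_{\Okubo}(e)$. The plan is to identify the elements satisfying Corollary~\ref{corollary:two-types-char-three}(1) with the nonzero vectors of $C_{\Okubo}(e) \cap C_{\Okubo}(e)^{\perp}$, and then to show that this intersection equals $\spn\{p,q\}$. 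The inclusion ``type~(1) $\subseteq$ intersection'' is immediate: by Lemma~\ref{lemma:contained-in-centralizer} the condition $x*x=0$ forces $x \in C_{\Okubo}(e)$, while the defining requirement $x \perp C_{\Okubo}(e)$ of Corollary~\ref{corollary:two-types-char-three}(1) means exactly $x \in C_{\Okubo}(e)^{\perp}$.

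For the reverse inclusion I would verify that every nonzero $x \in C_{\Okubo}(e)\cap C_{\Okubo}(e)^{\perp}$ satisfies $x*x=0$ (so that it is a zero divisor of type~(1), its being a zero divisor following from $\n(x) = 0$). Here the key input is Proposition~\ref{proposition:idempotents-relation}(1): such an $x$ makes $e+x$ a singular, hence nonzero, idempotent. Expanding $(e+x)*(e+x)=e+x$ and using $e*e=e$ together with $x*e = e*x = \n(e,x)e - x$ --- which follows from $x \in \Fix(\tau) = C_{\Okubo}(e)$ and Eq.~\eqref{equation:tau-automorphism} --- and the vanishing $\n(e,x)=0$ (because $e \in C_{\Okubo}(e)$ and $x \perp C_{\Okubo}(e)$), one is left with $x*x = 3x = 0$ in characteristic~$3$. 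This is the only genuinely delicate step: everything else is linear bookkeeping, whereas promoting ``orthogonal to $C_{\Okubo}(e)$'' to ``squares to zero'' is what really uses the idempotent structure supplied by the Petersson construction.

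It then remains to compute the intersection. Since $\n(\cdot,\cdot)$ is nondegenerate on the eight-dimensional algebra and $\dim C_{\Okubo}(e)=6$, we have $\dim C_{\Okubo}(e)^{\perp}=2$. The relation $p \perp C_{\Okubo}(e)$ is already recorded in the proof of Corollary~\ref{corollary:two-types-char-three}, and evaluating $\n(q,\cdot)$ on the six listed basis vectors of $C_{\Okubo}(e)$ by means of Eqs.~\eqref{equation:norm-of-basis} and~\eqref{equation:scpr-of-basis} yields $q \perp C_{\Okubo}(e)$ as well. As $p$ and $q$ are linearly independent, they span the two-dimensional space $C_{\Okubo}(e)^{\perp}$; thus $C_{\Okubo}(e)^{\perp} = \spn\{p,q\} \subseteq C_{\Okubo}(e)$, and therefore $C_{\Okubo}(e)\cap C_{\Okubo}(e)^{\perp} = \spn\{p,q\}$, which establishes the first assertion. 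The mutual orthogonality is then automatic: any $u \in \spn\{p,q\} = C_{\Okubo}(e)^{\perp}$ is orthogonal to every element of $C_{\Okubo}(e)$, in particular to every $v \in \spn\{p,q\} \subseteq C_{\Okubo}(e)$, so $\n(u,v)=0$.
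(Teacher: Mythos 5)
Your identification of the type-(1) elements with the nonzero vectors of $C_{\Okubo}(e) \cap C_{\Okubo}(e)^{\perp}$ is correct, and your route differs from the paper's in an interesting way: the paper obtains $q*q = 0$ and $p*q = q*p = 0$ concretely, by recognizing $-q$ as the element $w_k$ of Lemma~\ref{lemma:orthogonality-characteristic-three} for $\beta = 1$, $k = -1$, and then cites Elduque's result that $C_{\Okubo}(e) \cap C_{\Okubo}(e)^{\perp} = \spn\{u_2, v_3\}$ has dimension~$2$; you instead derive the square-zero property abstractly from the singular-idempotent description of Proposition~\ref{proposition:idempotents-relation}(1) --- the expansion $(e+x)*(e+x) = e - 2x + x*x$ via $e*x = x*e = -x$ is correct and gives $x*x = 3x = 0$ in characteristic~$3$ (in fact, combining parts (1) and (2) of Proposition~\ref{proposition:idempotents-relation} yields $x*x = 0$ with no computation at all) --- and you get the dimension count from nondegeneracy of the form together with the verification $p, q \in C_{\Okubo}(e)^{\perp}$. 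Up to this point the argument is sound.

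The final claim, however, has a genuine gap. In this paper ``orthogonal'' without qualification means $u*v = v*u = 0$, the relation defining $\Gamma_O(\Okubo)$; orthogonality with respect to the bilinear form is always written $x \perp y$. The proposition asserts, and the paper's proof establishes, the product version, and that stronger version is exactly what is used downstream: in Corollary~\ref{corollary:unique-path-three}, pairwise orthogonality of type-(1) elements makes any two distinct type-(1) lines adjacent in $\Gamma_O(\Okubo)$, so a type-(2) vertex adjacent to one type-(1) vertex lies within distance~$2$ of every type-(1) vertex, contradicting $\D([x],[y]) = 3$ in the mixed case. Your weaker conclusion $\n(u,v) = 0$ only gives $\D \leq 2$ between two type-(1) vertices via Corollary~\ref{corollary:distance-at-most-two}, hence $\D([x],[y]) \leq 3$ overall --- no contradiction. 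Nor can your own machinery close this as it stands: since every element of $\spn\{p,q\}$ squares to zero, expanding $(u+v)*(u+v) = 0$ yields only the anticommutation $u*v = -v*u$. The missing ingredient is precisely $p*q = q*p = 0$, which you can obtain either by direct computation from Table~\ref{table:okubo-algebra-isotropic}, or, as the paper does, from $-q = w_{-1} \in O_{\Okubo}(u_{-1}) = O_{\Okubo}(p)$ in the notation of Lemma~\ref{lemma:orthogonality-characteristic-three}; bilinearity of the product then kills all cross terms in the span.
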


    \begin{proof}
    It is shown in Lemma~\ref{lemma:orthogonality-characteristic-three} that the element $x = (z_{0, 2} + z_{1, 2} + z_{2, 2}) - (z_{0, 1} + z_{2, 1} + z_{1, 1})$ satisfies the conditions of Corollary~\ref{corollary:two-types-char-three}(1). Consider similarly the element $y = (z_{1,0} + z_{0,1} + z_{2,2}) - (z_{2,0} + z_{0,2} + z_{1,1}) \perp C_{\Okubo}(x)$. In terms of Lemma~\ref{lemma:orthogonality-characteristic-three}, we have $y = -w_k$ for $\beta = 1$ and $k = -1$. Thus $y*y = 0$, and $y$ is orthogonal to $x$. Alternatively, one can note that $y = \varphi(x)$ for $\varphi \in \Aut(\Okubo)$ given by
    \begin{align*}
        \varphi(z_{1,0}) &= z_{2,1}, & \varphi(z_{0,1}) &= z_{1,1}, & \varphi(z_{1,1}) &= z_{0,2}, & \varphi(z_{1,2}) &= z_{1,0},\\
        \varphi(z_{2,0}) &= z_{1,2}, & \varphi(z_{0,2}) &= z_{2,2}, & \varphi(z_{2,2}) &= z_{0,1}, & \varphi(z_{2,1}) &= z_{2,0}.
    \end{align*}
    Since $x*y=y*x=0$ and $x, y \perp C_{\Okubo}(e)$, any linear combination of $x$ and $y$ also satisfies the conditions of Corollary~\ref{corollary:two-types-char-three}(1). It remains to note that $C_{\Okubo}(e) \cap C_{\Okubo}(e)^{\perp}$ coincides with $\spn \{ u_2, v_3 \}$ (see the proof of Proposition~8.7 in~\cite{Eld18}) and has dimension~$2$.
    \end{proof}
    
    \begin{corollary} \label{corollary:unique-path-three}
    Let $\Okubo$ be the split Okubo algebra over a field~$\F$, $\chrs \F = 3$. Then all shortest paths in $\Gamma_O(\Okubo)$ are unique.
    \end{corollary}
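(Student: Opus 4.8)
The plan is to first reduce everything to the single nontrivial connected component and then to shortest paths between its square-zero vertices. By Theorem~\ref{theorem:graph-of-orthogonality} the components of $\Gamma_O(\Okubo)$ are the two-vertex sets $V_x$, in which shortest paths are trivially unique, together with the large component $V$ of diameter~$5$. Inside $V$ the vertices split into two kinds: the \emph{core} vertices $[x]$ with $x*x=0$, whose orthogonalizer is three-dimensional by Corollary~\ref{corollary:orthogonalizer-formula}(2), and the \emph{pendant} vertices $[w]$ with $w*w\neq 0$ and $\n(w,w*w)=0$, for which $O_{\Okubo}(w)=\F(w*w)$, so that $[w]$ is adjacent to the single core vertex $[w*w]$ and to nothing else. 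Since a pendant has degree~$1$, every path leaving it must begin with its unique incident edge; hence for any two vertices the shortest-path problem reduces, after stripping off these forced terminal edges, to the shortest-path problem between the attached core vertices. Concretely, a core--pendant pair is at distance $1+\D([x],[w*w])$ and a pendant--pendant pair at distance $2+\D([w*w],[w'*w'])$, and in each case prepending and appending the forced edges preserves uniqueness. Thus it suffices to prove that shortest paths between core vertices are unique.

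For core vertices I would argue by the value of $k=\D([x],[y])$ with $x*x=y*y=0$, first noting that such distances lie in $\{1,2,3\}$: by Corollary~\ref{corollary:distance-at-most-two} one has $k\leq 2$ exactly when $\n(x,y)=0$, and otherwise $k=3$. For $k=1$ the shortest path is the edge itself, and for $k=2$ uniqueness is exactly Lemma~\ref{lemma:length-two-path}. The decisive case is $k=3$, i.e.\ $\n(x,y)\neq 0$. Here I would first show that both $x$ and $y$ are necessarily of type~(2) in the sense of Corollary~\ref{corollary:two-types-char-three}: by Lemma~\ref{lemma:contained-in-centralizer} every square-zero element lies in $C_{\Okubo}(e)$, while a type-(1) element lies in $C_{\Okubo}(e)^{\perp}$ by Corollary~\ref{corollary:two-types-char-three}(1) and Proposition~\ref{proposition:zero-square-class}; consequently a type-(1) element would be orthogonal to every square-zero element, forcing $\n(x,y)=0$, a contradiction. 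Once $y$ is known to be of type~(2), Corollary~\ref{corollary:two-types-char-three}(2) guarantees that the square-zero elements of $O_{\Okubo}(y)$ form a single two-dimensional subspace, so the parameter $m$ in Proposition~\ref{proposition:m-paths-length-three} equals~$1$ and there is exactly one path of length~$3$.

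The main obstacle is precisely this $k=3$ step: everything hinges on excluding type-(1) endpoints---which is where the characteristic-$3$ dichotomy between singular and quadratic idempotents enters, through Corollary~\ref{corollary:two-types-char-three} and Proposition~\ref{proposition:zero-square-class}---and on the single-subspace conclusion of Corollary~\ref{corollary:two-types-char-three}(2), so that Proposition~\ref{proposition:m-paths-length-three} yields $m=1$ rather than the value $m=2$ occurring when $\chrs\F\neq 3$ and $\omega\in\F$ (compare Proposition~\ref{proposition:two-paths-length-three}). Assembling the three core cases with the forced pendant edges then covers all distances $0\leq k\leq 5$: core--core pairs give $k\leq 3$, attaching one pendant raises the distance to at most~$4$, and two pendants attached to cores at distance~$3$ realize the diameter value~$5$, in every instance with a unique shortest path. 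This establishes that all shortest paths in $\Gamma_O(\Okubo)$ are unique.
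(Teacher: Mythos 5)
Your proof is correct, and its skeleton matches the paper's: strip the forced pendant edges at vertices $[w]$ with $w*w\neq 0$, handle core--core distances $1$ and $2$ by Lemma~\ref{lemma:length-two-path}, and settle the decisive distance-$3$ case by showing both endpoints are of type~(2), so that Proposition~\ref{proposition:m-paths-length-three} applies with $m=1$ via Corollary~\ref{corollary:two-types-char-three}(2). Where you genuinely diverge is in how type-(1) endpoints are excluded and how the base field is handled. The paper first assumes $\F$ algebraically closed, uses the single-orbit property of quadratic idempotents (Proposition~\ref{proposition:single-orbit}) to deduce that every type-(2) vertex is adjacent to some type-(1) vertex, combines this with the mutual orthogonality of type-(1) elements (Proposition~\ref{proposition:zero-square-class}) to bound the distance by~$2$ whenever an endpoint is of type~(1), and then descends to arbitrary~$\F$ by observing that a shortest path in $\Gamma_O(\Okubo)$ of length~$3$ remains a shortest path in $\Gamma_O(\Okubo_{\overline{\F}})$. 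You instead exclude type-(1) endpoints by a direct bilinear-form computation: a type-(1) element lies in $C_{\Okubo}(e)^{\perp}$, every square-zero element lies in $C_{\Okubo}(e)$ by Lemma~\ref{lemma:contained-in-centralizer}, hence $\n(x,y)=0$ and Corollary~\ref{corollary:distance-at-most-two} gives $\D([x],[y])\leq 2$. This is cleaner: it needs neither the orbit argument nor a separate algebraically-closed case inside the corollary, since the only unavoidable passage to $\overline{\F}$ is already encapsulated in the proof of Corollary~\ref{corollary:two-types-char-three}(2), which you legitimately invoke over arbitrary~$\F$ of characteristic~$3$. What the paper's route buys in exchange is the explicit geometric picture that every quadratic-type vertex is adjacent to a singular-type vertex, which is of independent interest. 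One small remark: your citation of Proposition~\ref{proposition:zero-square-class} in the exclusion step is superfluous, as Corollary~\ref{corollary:two-types-char-three}(1) alone gives $x\perp C_{\Okubo}(e)$.
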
 

    \begin{proof}
    Assume first that $\F$ is algebraically closed. Consider an arbitrary element which satisfies the conditions of Corollary~\ref{corollary:two-types-char-three}(2). Then it belongs to the same orbit under the action of the automorphism group as the element $z_{0,2} + z_{1,2} + z_{2,2}$ whose orthogonalizer contains $(z_{0, 2} + z_{1, 2} + z_{2, 2}) - (z_{0, 1} + z_{2, 1} + z_{1, 1})$. Thus any element of type~(2) is adjacent to some element of type~(1). By Proposition~\ref{proposition:zero-square-class}, all elements of type~(1) are orthogonal to each other. It follows that if $x*x = y*y = 0$ and $\D([x],[y]) = 3$ for some $x, y \in Z(\Okubo)$, then both elements $x$ and $y$ are of type~(2). Hence, by Proposition~\ref{proposition:m-paths-length-three}, there exists a unique shortest path between $[x]$ and $[y]$.

    Let now $\F$ be an arbitrary field with $\chrs \F = 3$, and $x, y \in Z(\Okubo)$ be such that $x*x = y*y = 0$ and $\D([x],[y]) = 3$ in $\Gamma_O(\Okubo)$. Passing to the algebraic closure $\overline{\F}$ of~$\F$, we obtain the split Okubo algebra $\Okubo_{\overline{\F}} = \Okubo \otimes_{\F} \overline{\F}$ over $\overline{\F}$. By Corollary~\ref{corollary:distance-at-most-two}, it holds that $\n(x,y) \neq 0$, so in $\Gamma_O(\Okubo_{\overline{\F}})$ we again have $\D([x],[y]) = 3$. Any shortest path between $[x]$ and $[y]$ in $\Gamma_O(\Okubo)$ is also a shortest path in $\Gamma_O(\Okubo_{\overline{\F}})$, so there is only one such path.

    If $x*x = y*y = 0$ and $\D([x],[y]) = 2$ in $\Gamma_O(\Okubo)$, then the shortest path between $[x]$ and~$[y]$ is unique by Lemma~\ref{lemma:length-two-path}. Since $\Gamma_O(\Okubo)$ is a simple graph, any shortest path of length one is unique. Hence the uniqueness of the shortest path between an arbitrary pair of vertices $[x]$ and~$[y]$ with $\n(x,x*x) = \n(y,y*y) = 0$ follows from the proof of Lemma~\ref{lemma:diameter-five}.
    
    It remains to apply Proposition~\ref{proposition:connectivity-component-with-type3-elem} in the case when $\n(x,x*x) \neq 0 \neq \n(y,y*y)$.
    \end{proof}
	
	\section{The case of a field containing a primitive cube root of unity}

    In this section we pay special attention to the orthogonality graph of an Okubo algebra with nonzero idempotents in the case of a field~$\F$, $\chrs \F \neq 3$, which contains a primitive cube root of unity~$\omega$. Recall that, by Proposition~\ref{proposition:omega-isotropic}(2), such an Okubo algebra is always isomorphic to the pseudo-octonion algebra $P_8(\F)$, that is, it can be considered as the set $\frsl_3(\F)$ of traceless $3 \times 3$ matrices over~$\F$ with the multiplication given by Eq.~\eqref{equation:product}. In particular, if the field~$\F$ is algebraically closed, then by definition any Okubo algebra over~$\F$ is isomorphic to $P_8(\F)$, so it always contains a nonzero idempotent (see also~\cite[Theorem~2.8]{EM91} which shows that any finite-dimensional flexible composition algebra over an algebraically closed field with $\chrs \F \neq 2, 3$ contains a nonzero idempotent).
    
    In order to emphasize the connection between the orthogonality graph of $P_8(\F)$ and orthogonality graphs of matrix rings, we give an alternative proof of Theorem~\ref{theorem:graph-of-orthogonality} which relies on the results of the paper~\cite{Guterman-Markova_Orthogonality} by Bakhadly, Guterman, and Markova.

    According to Proposition~\ref{proposition:norm-of-zero-divisor}, the criterion $\n(a) = 0$ characterizes zero divisors in an arbitrary Okubo algebra. However, in the case of the pseudo-octonion algebra $P_8(\F)$, a complete and explicit classification of zero divisors can be given.

    \begin{proposition} \label{proposition:zero-divisors}
        Let $\chrs \F \neq 3$ and $\omega \in \F$. Then 
	\[ Z(P_8(\F)) = (\mathcal{N}_3 \cup \F \Orb(\Omega)) \setminus \{ 0 \},\]
    where $\mathcal{N}_3$ is the set of nilpotent matrices in $M_3(\F)$, and $\Orb(\Omega)$ is the orbit of the element
        \begin{equation} \label{equation:matrix-x}
        \Omega = \begin{pmatrix}
			1 &  0 & 0 \\
			0  & \omega & 0 \\
			0  &  0 & \omega^2
		\end{pmatrix}
        \end{equation}
        under the conjugation action of ${\rm GL_3}(\F)$ on $P_8(\F)$.
	\end{proposition}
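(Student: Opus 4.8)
The plan is to reduce everything to the characteristic polynomial and then to separate the singular and non-singular cases. By Proposition~\ref{proposition:norm-of-zero-divisor}, a nonzero element $a \in P_8(\F)$ is a zero divisor if and only if $\n(a) = 0$. Since $P_8(\F) = \frsl_3(\F)$ consists of traceless matrices and $\chrs \F \neq 3$, Eq.~\eqref{equation:norm-division-by-3} gives $\n(a) = -\tfrac13 \s(a)$, so $\n(a) = 0$ is equivalent to $\s(a) = 0$. Substituting $\tr(a) = 0 = \s(a)$ into Eq.~\eqref{equation:ch}, I find that the characteristic polynomial of any zero divisor is exactly $\lambda^3 - \det(a)$. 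Thus $Z(P_8(\F))$ is precisely the set of nonzero traceless matrices whose characteristic polynomial has the form $\lambda^3 - \det(a)$, and the whole problem reduces to analysing this polynomial according to whether $\det(a)$ vanishes.

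First I would check the easy inclusion $\supseteq$, namely that both $\mathcal{N}_3 \setminus \{0\}$ and $\F\Orb(\Omega) \setminus \{0\}$ consist of zero divisors. A nonzero nilpotent matrix has all eigenvalues equal to $0$, so $\tr = \s = 0$ and $\n = 0$. For $\Omega = \diag(1,\omega,\omega^2)$ the eigenvalues are the three cube roots of unity, hence its characteristic polynomial is $\lambda^3 - 1$ and $\s(\Omega) = 0$; since conjugation by $\GL_3(\F)$ is a norm-preserving automorphism of $P_8(\F)$ (one checks directly from Eq.~\eqref{equation:product} that $g(x*y)g^{-1} = (gxg^{-1})*(gyg^{-1})$) and scaling satisfies $\n(ca) = c^2\n(a)$, every nonzero element of $\F\Orb(\Omega)$ again has $\n = 0$.

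For the reverse inclusion I would take $a$ with $\tr(a) = \s(a) = 0$ and split on $\det(a)$. If $\det(a) = 0$, the characteristic polynomial is $\lambda^3$, so the Cayley--Hamilton theorem gives $a^3 = 0$ and $a \in \mathcal{N}_3$. If $d = \det(a) \neq 0$, then $\lambda^3 - d$ is separable (its derivative $3\lambda^2$ is coprime to it because $\chrs \F \neq 3$ and $d \neq 0$), so $a$ is a regular semisimple element with three distinct eigenvalues, namely the cube roots of $d$. Writing $d = c^3$ for a cube root $c$ and using $\omega \in \F$, the full eigenvalue set is $\{c, c\omega, c\omega^2\}$; once these lie in $\F$, the matrix $a$ is diagonalizable over $\F$ with distinct eigenvalues, hence conjugate by some $g \in \GL_3(\F)$ to $\diag(c, c\omega, c\omega^2) = c\,\Omega$. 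Therefore $a = c\, g\Omega g^{-1} \in \F\Orb(\Omega)$, which would complete the classification.

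The main obstacle is the non-singular case, and specifically the step that realises the eigenvalues inside $\F$. Because $\omega \in \F$, the three cube roots of $d$ become $\F$-rational simultaneously, so the entire argument hinges on producing a single cube root $c \in \F$ of $d = \det(a)$; granting this, the distinctness of the eigenvalues immediately forces diagonalizability over $\F$ and conjugacy to the explicit diagonal model $c\,\Omega$. I would therefore concentrate the effort precisely on this cube-root and diagonalization step, since the reduction to the characteristic polynomial, the nilpotent case, and the easy inclusion are routine; it is here that the standing hypotheses $\chrs\F\neq 3$ and $\omega\in\F$ do the real work of identifying $a$ with a scalar multiple of $\Omega$.
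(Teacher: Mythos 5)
Your reduction, case split, and easy inclusion coincide exactly with the paper's own proof: both arguments combine Proposition~\ref{proposition:norm-of-zero-divisor} with Eq.~\eqref{equation:norm-division-by-3} and the Cayley--Hamilton theorem to get $x^3 = \det(x)I$ for a zero divisor~$x$, settle the case $\det(x)=0$ by nilpotency, and in the nonsingular case diagonalize $x$ with eigenvalue set $\{c, c\omega, c\omega^2\}$, where $c^3 = \det(x)$. But the step you flag as the remaining obstacle --- producing a cube root $c \in \F$ of $\det(x)$ --- is a genuine gap, and in fact it cannot be closed: nothing in the hypotheses forces $\det(x)$ to be a cube in~$\F$. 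Concretely, take $\F = \mathbb{Q}(\omega)$ and let $x$ be the companion matrix of $\lambda^3 - 2$. By Eq.~\eqref{equation:ch} its characteristic polynomial gives $\tr(x) = \s(x) = 0$ and $\det(x) = 2$, so $\n(x) = 0$ and $x \in Z(P_8(\F))$ by Proposition~\ref{proposition:norm-of-zero-divisor}; yet $x$ is neither nilpotent nor in $\F\Orb(\Omega)$, since every nonzero element of $\F\Orb(\Omega)$ has eigenvalues $c, c\omega, c\omega^2 \in \F$, while the eigenvalues of $x$ are the cube roots of~$2$, none of which lies in $\mathbb{Q}(\omega)$ (the same phenomenon occurs over $\F_7$ with $\lambda^3 - 3$, as the cubes in $\F_7^{\times}$ are only $\{1,6\}$). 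So the proposition as stated holds only when every determinant that occurs is a cube in~$\F$, e.g., when the cube map on~$\F$ is surjective, and in particular when $\F$ is algebraically closed.

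You should not fault yourself for being unable to fill this step, because the paper's proof makes the same silent leap: it passes to $\overline{\F}$, shows that $\Jord_x$ is proportional to $\Omega$, and then asserts that ``since $\omega \in \F$, the matrix $\Jord_x$ is also the Jordan normal form of $x$ over~$\F$'' --- which presumes precisely that the proportionality constant $c$, a cube root of $\det(x)$, lies in~$\F$; that is exactly your missing cube root. A correct general statement would replace $\F\Orb(\Omega)$ by the set of traceless matrices whose characteristic polynomial is $\lambda^3 - d$ with $d \in \F \setminus \{0\}$ (equivalently, $x^3 = dI$ with $d \neq 0$). Note that for such an~$x$ one still has $\n(x, x*x) = \tr(x^3)/3 = d \neq 0$, so by Proposition~\ref{proposition:connectivity-component-with-type3-elem} these additional zero divisors form two-element components $\{[x],[x*x]\}$, and the main Theorem~\ref{theorem:graph-of-orthogonality} is unaffected; only the explicit enumeration of components in Theorem~\ref{theorem:graph-of-orthogonality-main} would need the corresponding amendment. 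In short, your argument is the paper's argument, correctly carried out up to the one step that neither you nor the paper can justify over an arbitrary field; restricted to fields in which every element is a cube (in particular algebraically closed~$\F$, the motivating case of that section), your diagonalization argument completes the proof verbatim.
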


    \begin{proof}
    Let $x \in M_3(\F)$ be an arbitrary traceless matrix. It follows from the Cayley--Hamilton theorem and Eq.~\eqref{equation:ch} that
        \[ x^3 + \s(x)x - \det(x) I = 0. \]
	Assuming that $\n(x) = 0$, we use Eq.~\eqref{equation:norm-division-by-3} to obtain $\s(x) = 0$, and hence
	\begin{equation} \label{equation:CH-for-zero-norm-elem}
		x^3 = \det(x)I.
	\end{equation}
	
	Consider two cases:
	\begin{enumerate}[\rm (1)]
		\item If $\det(x) = 0$, then $x^3 = 0$, so $x \in \mathcal{N}_3$. On the other hand, it can be easily seen that $\mathcal{N}_3 \subseteq P_8(\F)$ and all elements in $\mathcal{N}_3$ have zero norm.
		\item Let now $\det(x) \neq 0$. We pass to the algebraic closure $\overline{\F}$ of the field~$\F$ and note that if $\Jord_x$ is the Jordan normal form of the matrix~$x$ over~$\overline{\F}$, then $\Jord_x \in P_8(\overline{\F})$, and the condition $\n(x) = 0$ is equivalent to $\n(\Jord_x) = 0$. Hence Eq.~\eqref{equation:CH-for-zero-norm-elem} holds also for~$\Jord_x$, and $\det(\Jord_x) \neq 0$. One can easily verify that $\Jord_x$ must be proportional to the matrix $\Omega$ defined in Eq.~\eqref{equation:matrix-x}. Since $\omega \in \F$, the matrix $J_x$ is also the Jordan normal form of~$x$ over the field~$\F$. \qedhere
	\end{enumerate}
    \end{proof}

    \begin{remark} \label{remark:difference-btw-zero-divisors}
    If $y \in Z(P_8(\F))$, then it can be easily seen from Eq.~\eqref{equation:product} that $y*y = yy$. It follows from Eq.~\eqref{equation:bilinear-form} that $\n(y, y*y) = \n(y, yy) = \tr(y^3)/3$.
        \begin{itemize}
    		\item For $y \in \F \Orb(\Omega)$ we have $y = \gamma g\Omega g^{-1}$, where $g \in GL_3(\F)$ and $0 \neq \gamma \in \F$. Hence $\tr(y^3) = \gamma^3 \tr(\Omega^3) = \gamma^3 \tr(I) \neq 0$, and thus $\n(y, y*y)\neq 0$.
    		\item For a nilpotent matrix $y \in \mathcal{N}_3$ we have $y^3 = 0$, so $\tr(y^3) = 0$ and $\n(y, y*y) = 0$.
    	\end{itemize}
    \end{remark}
	
	\begin{corollary} \label{corollary:prelemma}
		Let $a, \, b \in \mathcal{N}_3$. Then the following conditions are equivalent:
		\begin{enumerate} [\rm (1)]
			\item $a*b = b*a = 0$, \label{enumerate:ort1}
			\item $ab = ba = 0$. \label{enumerate:ort2}
		\end{enumerate}
	\end{corollary}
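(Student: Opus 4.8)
The plan is to prove both implications, of which $(2) \Rightarrow (1)$ is immediate and $(1) \Rightarrow (2)$ carries the content. For the easy direction, if $ab = ba = 0$ then in particular $\tr(ab) = 0$, and substituting into Eq.~\eqref{equation:product} gives $a*b = \mu\, ab + (1-\mu)\, ba - \tfrac{\tr(ab)}{3} I = 0$ and likewise $b*a = 0$.

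For $(1) \Rightarrow (2)$, I would first translate the two hypotheses $a*b = 0$ and $b*a = 0$ into matrix equations via Eq.~\eqref{equation:product}. Writing $t = \tr(ab) = \tr(ba)$, they read
\[
\mu\, ab + (1-\mu)\, ba = \tfrac{t}{3} I, \qquad (1-\mu)\, ab + \mu\, ba = \tfrac{t}{3} I.
\]
Subtracting one from the other eliminates the scalar term and yields $(2\mu - 1)(ab - ba) = 0$. The key point is that the scalar $2\mu - 1$ is nonzero: using $\mu = \tfrac{1-\omega^2}{3}$ and the relation $1 + \omega + \omega^2 = 0$, one computes $2\mu - 1 = \tfrac{\omega - \omega^2}{3}$, and this is nonzero because $\chrs \F \neq 3$ and $\omega \neq \omega^2$ (a primitive cube root of unity is distinct from its square, since $\omega(\omega-1) \neq 0$). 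Hence $a$ and $b$ commute, i.e. $ab = ba$.

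Once commutativity is established, substituting $ab = ba$ back into either equation collapses it to $ab = \tfrac{t}{3} I$. It then remains to show $t = 0$, and here I would invoke nilpotency: a nilpotent $3 \times 3$ matrix is singular, so if $t \neq 0$ then $ab = \tfrac{t}{3} I$ would be invertible, forcing $a$ to be invertible --- a contradiction. Therefore $t = 0$ and $ab = ba = 0$, which is exactly condition~(2). The only genuinely delicate step is the verification that $2\mu - 1 \neq 0$, where both $\chrs \F \neq 3$ and the primitivity of $\omega$ enter; everything else is formal manipulation of Eq.~\eqref{equation:product} together with the standard fact that nilpotent matrices are not invertible.
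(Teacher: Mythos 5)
Your proof is correct, but it takes a genuinely different route from the paper's. The paper first establishes $\n(a,b)=0$ intrinsically, using the symmetric composition structure: by Remark~\ref{remark:difference-btw-zero-divisors} both $a$ and $b$ satisfy $\n(x,x*x)=0$, and a case split on whether $a*a\neq 0$ (where Corollary~\ref{corollary:orthogonalizer-formula}(1) forces $b\in\F(a*a)$) or $a*a=0$ (where $b=a*x$ for some $x\in a^{\perp}$, and the symmetry identity~\eqref{equation:symmetric} gives $\n(a,b)=\n(a*a,x)=0$) yields $\tr(ab)=0$ via Eq.~\eqref{equation:bilinear-form}; only then does it invoke Eq.~\eqref{equation:product} to conclude $ab=ba=0$, implicitly solving the same $2\times 2$ linear system in $ab$ and $ba$ whose determinant is $2\mu-1$. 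You instead bypass the composition-algebra machinery entirely: subtracting the two matrix equations kills the scalar term and gives commutativity from $2\mu-1=(\omega-\omega^2)/3\neq 0$ (your verification of this nonvanishing, which the paper leaves implicit, is exactly the right computation), after which $ab=\tfrac{t}{3}I$, and the singularity of the nilpotent matrix $a$ forces $t=0$. Your argument is more elementary and uniform --- no case split, no appeal to the orthogonalizer results that rest on the Matzri--Vishne annihilator structure --- and it has the virtue of making explicit the hypothesis $2\mu-1\neq 0$ on which both proofs ultimately depend. What the paper's route buys in exchange is that its key intermediate fact, $\n(a,b)=0$ for orthogonal elements with $\n(x,x*x)=0$, is an intrinsic consequence of the symmetric composition structure and so holds in any Okubo algebra, whereas your use of nilpotency and determinants is specific to the matrix model $P_8(\F)$ --- which is, of course, all the corollary requires.
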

	
	\begin{proof}
    Assume that condition~\eqref{enumerate:ort1} is satisfied. By Remark~\ref{remark:difference-btw-zero-divisors}, it holds that $\n(a, a*a) = \n(b, b*b) = 0$. We now show that $\n(a,b) = 0$. If $a * a \neq 0$, then, by Corollary~\ref{corollary:orthogonalizer-formula}, we have $b \in \F(a*a)$, and the desired equality follows. If $a*a = 0$, then $b = a * x$ for some $x \in a^{\perp}$. Since $P_8(\F)$ is a symmetric composition algebra, we have $\n(a,b) = \n(a,a*x) = \n(a*a,x) = \n(0,x) = 0$.
    
    Then it follows from Eq.~\eqref{equation:bilinear-form} that $\tr(ab) = \tr(ba) = 0$. By using Definition~\eqref{equation:product} of the multiplication~``$*$'', we obtain $ab = ba = 0$.
    
    Conversely, condition~\eqref{enumerate:ort2} guarantees~\eqref{enumerate:ort1} due to the definition of the multiplication~``$*$''.
	\end{proof}
	
    An alternative proof of the main result on the orthogonality graph $\Gamma_O(P_8(\F))$ relies on the statement of the following theorem:
	
	\begin{theorem}[{\cite[Theorem~5.9]{Guterman-Markova_Orthogonality}}] \label{theorem:main-for-nilp-in-mat-alg}
		Let $\mathbb{F}$ be an arbitrary field, $n \geq 3$, and let $\mathcal{N}_n$ denote the set of all nilpotent matrices in $M_n(\F)$. We also denote the subgraph of $\Gamma_O(M_n(\F))$ on the vertex set $\mathbb{P}(\mathcal{N}_n)$ by $\Gamma_O^{\mathcal{N}_n}(M_n(\F))$. Then 
        \[
        \diam(\Gamma_O^{\mathcal{N}_n}(M_n(\F))) = 
        \begin{cases}
            5, & n = 3,\\
            4, & n \geq 4.
        \end{cases}
        \]
	\end{theorem}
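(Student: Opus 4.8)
My plan is to push everything onto the subgraph spanned by the rank-one nilpotents (the \emph{core}) and to treat the single Jordan blocks of rank $n-1$ as pendant vertices hanging off it. The starting point is the dictionary $AB=BA=0 \iff \Image B \subseteq \Ker A$ and $\Image A \subseteq \Ker B$. First I would show that every nonzero nilpotent $A$ has a rank-one neighbour: since $A$ is nilpotent, $\Image A \cap \Ker A \neq 0$, so picking $0 \neq w$ there and any $0 \neq z \in (\Image A)^{\perp}$ (which exists as $\rnk A \le n-1$), the matrix $w z^{\top}$ is a rank-one nilpotent orthogonal to $A$. Second, a nilpotent with $\dim \Ker A = 1$ has a one-dimensional orthogonalizer, namely the line through the rank-one matrix $P_A$ fixed by $\Image P_A = \Ker A$ and $\Ker P_A = \Image A$; thus $[A]$ is a leaf whose unique neighbour is $[P_A]$, and conversely every rank-one nilpotent occurs as some $P_A$. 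Hence two single Jordan blocks satisfy $\D([A_1],[A_2]) = 2 + \D([P_{A_1}],[P_{A_2}])$, and together with the first reduction the whole computation collapses to distances within the core.

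For the core, writing rank-one nilpotents as $P_i = u_i v_i^{\top}$ with $v_i^{\top} u_i = 0$, I would determine when $P_1,P_2$ admit a common neighbour $c_1 c_2^{\top}$: this amounts to $c_1 \in \Ker P_1 \cap \Ker P_2$, $c_2^{\top} u_1 = c_2^{\top} u_2 = 0$ and $c_2^{\top} c_1 = 0$. For $n \ge 4$ the two relevant spaces have dimension $\ge n-2 \ge 2$, so the single extra linear constraint still leaves a nonzero solution, and the core has diameter $\le 2$. For $n = 3$ these spaces are lines, spanned by $v_1 \times v_2$ and $u_1 \times u_2$, and the obstruction is the scalar $(u_1 \times u_2)^{\top}(v_1 \times v_2)$, which by the Lagrange identity and $v_i^{\top} u_i = 0$ equals $-(v_2^{\top} u_1)(v_1^{\top} u_2)$. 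Thus in $M_3(\F)$ the distance is $\le 2$ exactly when $v_2^{\top}u_1 = 0$ or $v_1^{\top} u_2 = 0$. To force distance $\le 3$ in the remaining case I would interpose a rank-one $Q = q_1 q_2^{\top}$ adjacent to $P_1$, chosen with $q_1 \in v_1^{\perp} \cap v_2^{\perp}$ and $q_2 \in u_1^{\perp} \cap q_1^{\perp}$; then $v_2^{\top} q_1 = 0$ puts $Q$ within distance $2$ of $P_2$, giving $\D([P_1],[P_2]) \le 3$.

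Assembling by the triangle inequality, any two nonzero nilpotents are joined through their rank-one neighbours by a walk of length $\le 1+2+1 = 4$ if $n \ge 4$ and $\le 1+3+1 = 5$ if $n = 3$; this yields connectedness and the upper bounds. For the sharp examples I would take the non-adjacent rank-one nilpotents $P_1 = e_1 e_2^{\top}$ and $P_2 = e_2 e_1^{\top}$: by the criterion above they lie at distance $2$ when $n \ge 4$ but at distance $3$ when $n = 3$ (here $u_1 \times u_2$ and $v_1 \times v_2$ are both proportional to $e_3$ and fall foul of the nilpotency constraint, so no common neighbour exists). Realising each $P_i$ as the partner $P_{A_i}$ of a single Jordan block $A_i$, the pendant identity gives $\D([A_1],[A_2]) = 2 + \D([P_1],[P_2])$, which equals $4$ for $n \ge 4$ and $5$ for $n = 3$, matching the upper bounds.

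The part I expect to be genuinely delicate is the $n = 3$ core bound, and specifically the degenerate configurations hidden in the cross-product description: the argument presupposes that $u_1,u_2$ and $v_1,v_2$ are independent and that the interposed $Q$ can be chosen so that $q_2,v_2$ and $q_1,u_2$ remain independent, so that the candidate common neighbour $c_1 c_2^{\top}$ is genuinely nonzero. Whenever some of these vectors become proportional the relevant intersections only grow, so the distance can merely decrease; nonetheless these cases have to be listed and verified separately. Everything else is linear algebra valid over an arbitrary field and independent of $|\F|$, which is exactly what makes the dichotomy $5$ versus $4$ uniform in the field.
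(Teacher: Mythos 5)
This theorem is imported by the paper from Bakhadly--Guterman--Markova (their Theorem~5.9); the paper contains no internal proof of it, so your argument can only be compared with the original, and on its own terms it is correct. I checked the load-bearing steps: the rank-one neighbour $wz^{\top}$ of an arbitrary nonzero nilpotent works (note $z^{\top}w=0$ is automatic since $w \in \Image A$ and $z \perp \Image A$, so nilpotency is free); the orthogonalizer of a regular nilpotent is indeed the single line through $P_A$, since $\Image B \subseteq \Ker A$ and $\Image A \subseteq \Ker B$ force $B = wz^{\top}$ up to scalar, and $\Ker A \subseteq \Image A$ makes $P_A$ nilpotent --- this gives the exact pendant identity $\D([A_1],[A_2]) = 2 + \D([P_{A_1}],[P_{A_2}])$; the Lagrange-identity obstruction $-(v_2^{\top}u_1)(v_1^{\top}u_2)$ is right over any field, including characteristic~$2$; and the interposition step goes through because $v_2^{\top}u_1 \neq 0$ rules out $q_1 \propto u_1$ and $q_2 \propto v_2$, while $v_1^{\top}u_2 \neq 0$ rules out $q_1 \propto u_2$. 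Your worry about degenerate configurations is well placed but resolves favorably, and more systematically than you suggest: if both products are nonzero, then $u_1,u_2$ and $v_1,v_2$ are automatically independent (e.g.\ $v_2 \propto v_1$ would give $v_2^{\top}u_1 \propto v_1^{\top}u_1 = 0$), so the cross-product description is valid exactly in the critical case, and in all degenerate cases the distance drops to at most~$2$ as you predict. Two small points deserve explicit mention: first, when $A$ itself has rank one, the neighbour $wz^{\top}$ must be chosen with $z$ non-proportional to the row direction of~$A$ so that it is a genuinely distinct vertex (possible since $\dim(\Image A)^{\perp} = n-1 \geq 2$, even over $\F_2$), and similarly candidate common neighbours must avoid proportionality to the endpoints --- a finite line-avoidance that survives $\F_2$; second, your lower bound $\D([P_1],[P_2]) \geq 3$ for $n=3$ tacitly uses that \emph{any} common neighbour of $e_1e_2^{\top}$ and $e_2e_1^{\top}$, not just a rank-one candidate, is forced to have rank one, which follows since its image lies in $e_1^{\perp} \cap e_2^{\perp} = \F e_3$; this forcing should be stated, as the middle vertices of a path may a priori have rank~$2$. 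Compared with the original source, your packaging is more compact: Bakhadly, Guterman, and Markova work with matrices rather than proportionality classes (cf.\ the paper's Remark~\ref{remark:connection-btw-difinitions-of-orthogonality-graphs} on why this does not affect the diameter) and argue through a longer rank-based case analysis, whereas your pendant-vertex decomposition plus the single scalar obstruction isolates exactly where the $n=3$ anomaly comes from and is uniform over arbitrary fields.
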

	
	\begin{remark} \label{remark:connection-btw-difinitions-of-orthogonality-graphs}
    In~\cite{Guterman-Markova_Orthogonality} the orthogonality graph of the algebra~$\A$ is defined so that its vertices are all two-sided zero divisors of~$\A$, whereas in our case the vertices correspond to proportionality classes of these elements. Still, the above-mentioned result on the diameter of $\Gamma_O^{\mathcal{N}_n}(M_n(\F))$ remains valid. Indeed,  
    \begin{itemize}
        \item each path $[v_0] \leftrightarrow \dots \leftrightarrow [v_m]$ in our graph corresponds to the path $v_0 \leftrightarrow \dots \leftrightarrow v_m$ in the graph from~\cite{Guterman-Markova_Orthogonality};
        \item any shortest path $v_0 \leftrightarrow \dots \leftrightarrow v_m$ in the graph from~\cite{Guterman-Markova_Orthogonality} induces a path $[v_0] \leftrightarrow \dots \leftrightarrow [v_m]$ in our graph. Note that the loops are prohibited in our definition of the orthogonality graph. Thus, if $[v_i] = [v_{i+1}]$, then we ``glue together'' this pair of adjacent vertices, and the length of the path decreases by~$1$. However, if $m \geq 3$, then all the classes $[v_i]$ are pairwise distinct, as otherwise the original path could be shortened.
    \end{itemize}
    Thus, the difference between the definitions does not affect the diameter of the orthogonality graph if its value is at least~$3$.
	\end{remark}
	
	\begin{lemma} \label{lemma:subgraph-of-nilp}
		Let $\chrs \F \neq 3$ and $\omega \in \F$. Then the graphs $\Gamma_O^{\mathcal{N}_3}(P_8(\F))$ and $\Gamma_O^{\mathcal{N}_3}(M_3(\F))$ are isomorphic.
	\end{lemma}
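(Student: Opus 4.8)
The plan is to verify that the identity map on the common vertex set is the required graph isomorphism. Since every nilpotent matrix is traceless, we have $\mathcal{N}_3 \subseteq \frsl_3(\F)$, so $\mathcal{N}_3$ lies in the underlying vector space of $P_8(\F)$, and $\mathbb{P}(\mathcal{N}_3)$ makes sense as the same set of lines in both $M_3(\F)$ and $P_8(\F)$. First I would confirm that this set genuinely consists of vertices in each of the two graphs, i.e., that every nonzero nilpotent matrix is a two-sided zero divisor. For $M_3(\F)$ this is built into the statement of Theorem~\ref{theorem:main-for-nilp-in-mat-alg}. For $P_8(\F)$ it follows from Proposition~\ref{proposition:zero-divisors}, which places $\mathcal{N}_3 \setminus \{0\}$ inside $Z(P_8(\F))$, combined with Corollary~\ref{corollary:orthogonalizer-formula}, which shows that the orthogonalizer of any such element is nonzero; hence it is indeed a two-sided zero divisor and a vertex of $\Gamma_O^{\mathcal{N}_3}(P_8(\F))$.

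With the common vertex set $\mathbb{P}(\mathcal{N}_3)$ in place, I would take $\varphi = \id$ as the candidate isomorphism; it is automatically a bijection between the two vertex sets. It then remains only to check that $\varphi$ preserves and reflects adjacency, that is, for distinct classes $[a], [b]$ with $a, b \in \mathcal{N}_3$ one has $a * b = b * a = 0$ if and only if $ab = ba = 0$. This equivalence is exactly the content of Corollary~\ref{corollary:prelemma}. Consequently the edge sets of the two induced subgraphs coincide under $\varphi$, and the graphs $\Gamma_O^{\mathcal{N}_3}(P_8(\F))$ and $\Gamma_O^{\mathcal{N}_3}(M_3(\F))$ are isomorphic.

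The essential work has already been carried out in Corollary~\ref{corollary:prelemma}, so the only genuine (and very mild) obstacle is the bookkeeping that the two vertex sets really coincide and that both induced subgraphs are well defined; once nilpotent matrices are known to be vertices of $\Gamma_O(P_8(\F))$, the isomorphism is immediate and requires no further computation.
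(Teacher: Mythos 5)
Your proof is correct and follows the same route as the paper, which also deduces the lemma immediately from Corollary~\ref{corollary:prelemma} via the identity map on the common vertex set $\mathbb{P}(\mathcal{N}_3)$. The extra bookkeeping you supply (that $\mathcal{N}_3 \subseteq \frsl_3(\F)$ and that nonzero nilpotents are two-sided zero divisors in both algebras, via Proposition~\ref{proposition:zero-divisors} and Corollary~\ref{corollary:orthogonalizer-formula}) is accurate and merely makes explicit what the paper leaves implicit.
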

	
	\begin{proof}
		Follows immediately from Corollary~\ref{corollary:prelemma}.
	\end{proof}
	
	\begin{theorem} \label{theorem:graph-of-orthogonality-main}
		Let $\chrs \F \neq 3$, and $\F$ contains a primitive cube root of unity~$\omega$. The graph $\Gamma_O(P_8(\F))$ is disconnected, and the vertex sets of its connected components are as follows:
		\begin{enumerate} [\rm (1)]
			\item the set
			\[ V_{x} = \big\{ [x], \, [x*x] \big\} \]
            for any $x \in \Orb(\Omega)$, where $\Orb(\Omega)$ is the orbit of the element $\Omega$ defined in Eq.~\eqref{equation:matrix-x} under the conjugation action of $GL_n(\F)$ on $P_8(\F)$;
			\item the set
			\[ V = \big\{ [x] \mid x \in \mathcal{N}_3 \setminus \{ 0 \} \big\}, \]
            where $\mathcal{N}_3$ is the set of all nilpotent matrices in $M_3(\F)$.
		\end{enumerate}
		The diameter of the connected component on the vertex set $V_x$ equals $1$, and on the vertex set $V$ --- equals $5$.
		Note that the sets $V_x$ can coincide for distinct values of~$x$, and in this case they define the same connected component.
	\end{theorem}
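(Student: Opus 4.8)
The plan is to combine the explicit classification of zero divisors in Proposition~\ref{proposition:zero-divisors} with the dichotomy recorded in Remark~\ref{remark:difference-btw-zero-divisors}, and then to import the metric data for the nilpotent part directly from the matrix result of Theorem~\ref{theorem:main-for-nilp-in-mat-alg}. First I would split $Z(P_8(\F))$ according to the value of $\n(x,x*x)$. By Proposition~\ref{proposition:zero-divisors} every zero divisor lies in $\mathcal{N}_3$ or in $\F\Orb(\Omega)$, and Remark~\ref{remark:difference-btw-zero-divisors} identifies the first family as exactly those $x$ with $\n(x,x*x)=0$ and the second as exactly those with $\n(x,x*x)\neq 0$. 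This partitions the vertices of $\Gamma_O(P_8(\F))$ into $\mathbb{P}(\mathcal{N}_3)$ and $\mathbb{P}(\F\Orb(\Omega))$, matching the two families in the statement.

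Next I would dispose of the components $V_x$. For $x\in\Orb(\Omega)$ we have $\n(x,x*x)\neq 0$, so Proposition~\ref{proposition:connectivity-component-with-type3-elem} applies verbatim: $\{[x],[x*x]\}$ is a connected component of diameter~$1$, and, crucially, it is \emph{isolated}, since that proposition yields $O_{P_8(\F)}(x)=\F(x*x)$ and $O_{P_8(\F)}(x*x)=\F x$. I would also note that $x*x=xx=x^2$ by Remark~\ref{remark:difference-btw-zero-divisors}, and that $x^2=g\Omega^2 g^{-1}$ again lies in $\Orb(\Omega)$, because $\Omega^2=\diag(1,\omega^2,\omega)$ is conjugate to $\Omega$ via a permutation matrix; this justifies indexing the components by $x\in\Orb(\Omega)$ and explains why distinct $x$ may produce the same $V_x$. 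Isolation records in particular that no element of $\F\Orb(\Omega)$ is orthogonal to any nilpotent matrix.

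It then remains to treat the subgraph on $\mathbb{P}(\mathcal{N}_3)$, which by the previous paragraph has no edges to the rest of $\Gamma_O(P_8(\F))$ and is therefore a union of connected components. Here I would invoke the isomorphism of Lemma~\ref{lemma:subgraph-of-nilp}, $\Gamma_O^{\mathcal{N}_3}(P_8(\F))\cong\Gamma_O^{\mathcal{N}_3}(M_3(\F))$, whose proof rests on Corollary~\ref{corollary:prelemma}. Theorem~\ref{theorem:main-for-nilp-in-mat-alg} gives $\diam\Gamma_O^{\mathcal{N}_3}(M_3(\F))=5$, and in particular this graph is connected; hence $V=\mathbb{P}(\mathcal{N}_3\setminus\{0\})$ is a single connected component of diameter~$5$, completing the description and the diameter computation.

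The one point requiring genuine care—the main obstacle—is reconciling the two conventions for an orthogonality graph. The cited result of Bakhadly, Guterman and Markova is stated with vertices being individual matrices under ordinary multiplication, whereas our vertices are proportionality classes under the product~``$*$''. The passage through Corollary~\ref{corollary:prelemma} handles the change of product on $\mathcal{N}_3$, and Remark~\ref{remark:connection-btw-difinitions-of-orthogonality-graphs} shows that collapsing scalar multiples does not alter the diameter once it is at least~$3$. Since the diameter here equals~$5$, both adjustments are harmless and the value~$5$ transfers unchanged, so no separate estimate for $\Gamma_O^{\mathcal{N}_3}(P_8(\F))$ is needed.
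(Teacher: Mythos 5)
Your proposal is correct and follows essentially the same route as the paper's own proof: split $Z(P_8(\F))$ via Proposition~\ref{proposition:zero-divisors} and Remark~\ref{remark:difference-btw-zero-divisors}, handle $\Orb(\Omega)$ by Proposition~\ref{proposition:connectivity-component-with-type3-elem}, and transfer the diameter~$5$ from Theorem~\ref{theorem:main-for-nilp-in-mat-alg} through Lemma~\ref{lemma:subgraph-of-nilp} and Remark~\ref{remark:connection-btw-difinitions-of-orthogonality-graphs}. Your added observations --- that $x*x = g\Omega^2 g^{-1} \in \Orb(\Omega)$ since $\Omega^2$ is a permutation-conjugate of $\Omega$, and that the isolation of the components $V_x$ is what makes $\mathbb{P}(\mathcal{N}_3)$ a union of components --- are correct details the paper leaves implicit.
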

	
	\begin{proof}
		By Proposition~\ref{proposition:zero-divisors}, zero divisors in~$P_8(\F)$ are exactly nonzero elements from the union of $\mathcal{N}_3$ and $\F \Orb(\Omega)$.
        
        Consider first an arbitrary element $x \in \Orb(\Omega)$. According to Remark~\ref{remark:difference-btw-zero-divisors}, we have $\n(x, x*x) \neq 0$, so it follows from Proposition~\ref{proposition:connectivity-component-with-type3-elem} that $V_x$ forms a connected component in $\Gamma_O(P_8(\F))$.
		
	By Theorem~\ref{theorem:main-for-nilp-in-mat-alg} and Lemma~\ref{lemma:subgraph-of-nilp}, the vertices $\mathbb{P}(\mathcal{N}_3)$ of the graph $\Gamma_O(P_8(\F))$, which correspond to all zero divisors remaining, form a connected component whose diameter equals~$5$.
	\end{proof}

    \begin{remark} \label{remark:nilpotent-bigeodetic}
    In view of Lemma~\ref{lemma:subgraph-of-nilp} and the equality $z*z = zz$ for all $z \in Z(P_8(\F))$, Corollary~\ref{corollary:unique-path} on the number of shortest paths between any two vertices in $\Gamma_O(P_8(\F))$ can be reformulated in terms of the graph $\Gamma_O^{\mathcal{N}_3}(M_3(\F))$.

    Consider a field~$\F$ such that $\chrs \F \neq 3$ and $\omega \in \F$. Let nonzero matrices $x, y \in \mathcal{N}_3$ be such that $\D([x],[y]) = k$ in $\Gamma_O^{\mathcal{N}_3}(M_3(\F))$, $0 \leq k \leq 5$. Then either there exists a unique path of length $k$ between $[x]$ and $[y]$, or there are exactly two such paths. The second possibility is achieved precisely in the following cases:
    \begin{enumerate}[{\rm (1)}]
        \item $\D([x],[y]) = 3$, and $x^2 = y^2 = 0$;
        \item $\D([x],[y]) = 4$, and $x^2 = 0$ or $y^2 = 0$;
        \item $\D([x],[y]) = 5$.
    \end{enumerate}
    \end{remark}

    \medskip

    The authors are grateful to Professor Alexander E. Guterman for careful attention to the work and fruitful discussions.

    \newpage

    \appendix

    \section{A program for multiplication and norm computation} \label{section:appexdix-program}

     By Theorem~\ref{theorem:isotropic-norm}, an Okubo algebra over an arbitrary field~$\F$ has isotropic norm if and only if it is isomorphic to $\Okubo_{\alpha,\beta}$ for some $\alpha, \beta \in \F \setminus \{ 0 \}$. We use Table~\ref{table:okubo-algebra-isotropic} and Eq.~\eqref{equation:scpr-of-basis} to write a Wolfram Mathematica program for multiplication and norm computation in $\Okubo_{\alpha,\beta}$. The program was tested in Wolfram Mathematica 11.0. 
     
     Below we define the functions \verb|mult[x,y]| and \verb|n[x,y]| which take two elements $x$ and~$y$ as their arguments and return $xy$ and $\n(x,y)$, respectively. The parameters \verb|a| and \verb|b| correspond to $\alpha$ and $\beta$, respectively. The basis vectors of $\Okubo_{\alpha,\beta}$ are considered as standard basis vectors in an eight-dimensional space over~$\F$, and \verb|zo| is a zero vector of length~$8$, that is, the zero element of $\Okubo_{\alpha,\beta}$.

    \begin{lstlisting}
basis[i_] := Join[Table[0, i - 1], {1}, Table[0, 8 - i]];
{z10, z20, z01, z02, z11, z22, z12, z21} = Table[basis[i], {i, 8}];
zo = Table[0, 8];
multtable =
   {{z20, zo, -z11, zo, -z21, zo, zo, -a z01},
    {zo, a z10, zo, -z22, zo, -a z12, -a z02, zo},
    {zo, -z21, z02, zo, zo, -b z20, zo, -z22},
    {-z12, zo, zo, b z01, -b z10, zo, -b z11, zo},
    {zo, -a z01, -z12, zo, z22, zo, -b z20, zo},
    {-a z02, zo, zo, -b z21, zo, a b z11, zo, -a b z10},
    {-z22, zo, -b z10, zo, zo, -a b z01, b z21, zo},
    {zo, -a z11, zo, -b z20, -a z02, zo, zo, a z12}};
mult[x_, y_] := Sum[(x.multtable)[[i]]*y[[i]], {i, 8}]
ntable =
   {{0, a, 0, 0, 0, 0, 0, 0},
    {a, 0, 0, 0, 0, 0, 0, 0},
    {0, 0, 0, b, 0, 0, 0, 0},
    {0, 0, b, 0, 0, 0, 0, 0},
    {0, 0, 0, 0, 0, a b, 0, 0},
    {0, 0, 0, 0, a b, 0, 0, 0},
    {0, 0, 0, 0, 0, 0, 0, a b},
    {0, 0, 0, 0, 0, 0, a b, 0}};
n[x_, y_] := x.ntable.y
    \end{lstlisting}

    For instance, in order to verify calculations from Example~\ref{example:distance-five}, we write:
    \begin{lstlisting}
a = 1;
x = z01 - z11;
y = z02 - z22;
xx = mult[x,x]
yy = mult[y,y]
n[x,xx]
n[y,yy]
n[xx,yy]
    \end{lstlisting}
    The program returns:
    \begin{lstlisting}
{0, 0, 0, 1, 0, 1, 1, 0}
{0, 0, b, 0, b, 0, 0, b}
0
0
3b^2
    \end{lstlisting}
    Indeed, we have $x*x = z_{0, 2} + z_{1, 2} + z_{2, 2}$, $y*y = \beta (z_{0, 1} + z_{2, 1} + z_{1, 1})$, $\n(x, x*x) = \n(y, y*y) = 0$, and $\n(x*x, y*y) = 3\beta^2$.

    \newpage
    
    \bibliographystyle{abbrv}
    \bibliography{bibfile}
    
	\end{document}